\newtheorem{theorem}{Theorem}[section]
\newtheorem{proposition}[theorem]{Proposition}
\newtheorem{corollary}[theorem]{Corollary}
\newtheorem{lemma}[theorem]{Lemma}
\newtheorem{remark}[theorem]{Remark}
\newenvironment{definition}[1][Definition]{\begin{trivlist}
\item[\hskip \labelsep {\bfseries #1}]}{\end{trivlist}}
\newenvironment{example}[1][Example]{\begin{trivlist}
\item[\hskip \labelsep {\bfseries #1}]}{\end{trivlist}}
\titleformat{\section}[block]{\large\scshape\centering}{\thesection.}{1em}{} 
\titleformat{\subsection}[runin]
  {\normalfont\large\bfseries}{\thesubsection}{1em}{}
\titleformat{\subsubsection}[runin]
  {\normalfont\normalsize\bfseries}{\thesubsubsection}{1em}{}
\begin{document}
\title{Julia sets of complex H\'enon maps}
\author{Lorenzo Guerini and Han Peters}
\maketitle

\begin{abstract}
There are two natural definitions of the Julia set for complex H\'enon maps: the sets $J$ and $J^\star$. Whether these two sets are always equal is one of the main open questions in the field. We prove equality when the map acts hyperbolically on the a priori smaller set $J^\star$, under the additional hypothesis of substantial dissipativity. This result was claimed, without using the additional assumption, in the paper \cite{F}, but the proof is incomplete. Our proof closely follows ideas from \cite{F}, deviating at two points where substantial dissipativity is used.

We show that $J = J^\star$ also holds when hyperbolicity is replaced by one of two weaker conditions. The first is quasi-hyperbolicity, introduced in \cite{BS8}, a natural generalization of the one dimensional notion of semi-hyperbolicity. The second is the existence of a dominated splitting on $J^\star$. Substantially dissipative H\'enon maps admitting a dominated splitting on the possibly larger set $J$ were recently studied in in \cite{LP}.
\end{abstract}

\section{Introduction}

The Julia set plays an central role in the study of one dimensional holomorphic dynamical systems. There are several natural analogies for the one-dimensional Julia set when one studies the iteration of complex H\'enon maps. When only considering the forward dynamics, the natural definition is the set $J^+$, the boundary of $K^+$, the set of points with bounded orbits. Note that $J^+$ is also the set where the sequence of forward iterates locally does not form a normal family, and the set where the pluri-complex Green's function $G^+$ is not pluri-harmonic. The fact that these three definitions all lead to the same set is in complete analogy with the one-dimensional setting. Moreover, $J^+$ is equal to the support of $\mu^+ := dd^c G^+$, which in contrast with the one-dimension setting is not a measure but a $(1,1)$-current.

Considering the same objects for the backwards iterations of the invertible H\'enon maps leads to definitions of $J^-$, $G^-$ and $\mu^-$.

As was shown in \cite{BS1}, the wedge product $\mu = \mu^+ \wedge \mu^-$ can be defined and gives the unique measure of maximal entropy $\log(d)$. The support of this measure is denoted by $J^\star$. The set $J^\star$ is the closure of the set of periodic saddle points, see \cite{BLS}. There are therefore good reasons to consider the set $J^\star$ as the Julia set of the invertible dynamical system.

There is however another natural definition of the Julia set, namely the set $J = J^+ \cap J^-$. Since $J^+$ and $J^-$ are the respective supports of the currents $\mu^+$ and $\mu^-$, it follows immediately that $J^\star \subset J$. However, it is not known whether the opposite inclusion $J \subset J^\star$ also holds. The possible equality of the two sets $J$ and $J^\star$ is one of the most important open questions regarding the dynamics of complex H\'enon maps.

It is often more natural to make assumptions regarding the dynamics on the smaller Julia set $J^\star$, but one the other hand it is usually more  convenient to have these assumptions on the larger set $J$. In \cite{BS1} a complete description of uniformly hyperbolic H\'enon maps was given, and in particular it was shown that for those maps $J$ equals $J^\star$. In \cite{BS1} a H\'enon map is defined to be uniformly hyperbolic if it acts hyperbolically on the invariant set $J$. However, considering that $J^\star$ is the closure of the set of saddle points, the assumption that $f$ acts uniformly hyperbolically on $J^\star$ may be more natural, and can be tested using only the derivatives at those saddle points. A natural question therefore is whether the assumption that a H\'enon map acts uniformly hyperbolically on $J^\star$ is sufficient to conclude that $J = J^\star$, and in particular that the map also acts hyperbolically on $J$.

The paper \cite{F} claimed to have proved this statement, but unfortunately the proof is incomplete. We do not know how to complete the proof in general, but we will show here that the proof can be corrected under an additional assumption on the Jacobian determinant, namely:
$$
|\mathrm{Jac}(f)| < \frac{1}{\mathrm{deg}(f)^2}.
$$
We will refer to this condition as \emph{substantial dissipativity}. This condition was recently applied in \cite{DL}, \cite{LP} and \cite{LP2}, in each paper in order to deduce that stable manifolds are ``unbridged'', a notion originally introduced by Teisuke Jin. We will discuss substantial dissipativity and unbridged stable manifolds in more detail in section \ref{section:unbridged}.

Our main result is the following.

\begin{theorem}
\label{maintheorem}
Let $f$ be a substantial dissipative H\'enon map, and assume that $f$ is uniformly hyperbolic on $J^\star$. Then $J = J^\star$.
\end{theorem}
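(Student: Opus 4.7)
The inclusion $J^\star \subseteq J$ is immediate from $J^\star = \mathrm{supp}(\mu^+ \wedge \mu^-) \subseteq \mathrm{supp}(\mu^+) \cap \mathrm{supp}(\mu^-) = J$, so all the content lies in the reverse inclusion. I argue by contradiction: suppose there exists $p \in J \setminus J^\star$. Because $f$ is uniformly hyperbolic on the compact invariant set $J^\star$, the splitting $E^s \oplus E^u$ extends as a pair of continuous invariant cone fields to an open neighborhood $U$ of $J^\star$, and the local stable and unstable manifolds of points of $J^\star$ vary continuously with uniformly bounded geometry. The overall strategy, following \cite{F}, is to show that any $p \in J$ must be captured by the stable--unstable lamination emanating from $J^\star$, forcing $p \in \mathrm{supp}(\mu^+ \wedge \mu^-) = J^\star$.

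The first step is to exploit $p \in J^- = \mathrm{supp}(\mu^-)$: by \cite{BLS} the current $\mu^-$ is locally described by unstable manifolds of saddles in $J^\star$, so one can extract saddles $q_n \in J^\star$ and unstable disks $\Delta_n \subseteq W^u(q_n)$ accumulating on $p$. Using the invariant cone field in $U$ together with a normal families argument, I would pass to a limit unstable disk $\Delta^u$ tangent to $E^u$ and passing through $p$. Symmetrically, $p \in J^+$ produces a stable disk $\Delta^s$ through $p$. If this configuration $(\Delta^s,\Delta^u)$ arises as a genuine limit of the local product structure of $J^\star$, then $\mu^+ \wedge \mu^-$ charges every neighborhood of $p$ and one concludes $p \in J^\star$, the desired contradiction.

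The main obstacle, and precisely the place where \cite{F} is incomplete, is ensuring that the accumulating stable/unstable laminations at $p$ register a nontrivial intersection rather than sitting in some degenerate configuration. The danger is that a piece of $W^s(q)$ for a saddle $q \in J^\star$ might separate $p$ from the transverse lamination---a ``bridge'' in the sense of Jin---so that the limit disks $\Delta^s, \Delta^u$ are not actually traced out by nearby saddle points of $J^\star$. This is where substantial dissipativity enters: under $|\mathrm{Jac}(f)|<1/\deg(f)^2$, the results of \cite{DL,LP} give that stable manifolds of saddles in $J^\star$ are unbridged, so no such pathological configuration arises. Inserting this unbridgedness statement at the two points where the argument of \cite{F} breaks down should close the gap and complete the proof of $J = J^\star$.
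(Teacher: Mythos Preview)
Your proposal correctly identifies that substantial dissipativity enters via unbridgedness of stable manifolds, but beyond that the argument is too vague to constitute a proof, and the route you sketch is not the one the paper takes. The paper does \emph{not} build limiting stable and unstable disks through a hypothetical $p\in J\setminus J^\star$ and appeal to local product structure. It works instead with Green's functions restricted to stable manifolds: unbridgedness first gives $J^+ \cap \Delta^2_R = \bigcup_{p\in J^\star} W^s_R(p)$ (Lemma~\ref{lemma:5.5}), so any $q\in J\setminus J^\star$ already lies on some $W^s(p_0)$ with $p_0\in J^\star$; then one shows $G^-|_{W^s(p_0)}$ vanishes on an open set around $q$ (Proposition~\ref{vanishingprop}), since otherwise a current-theoretic density argument via \cite{BLS} produces points of $J^\star$ converging to $q$; a point $q_1\in J^\star$ sits on the boundary of that vanishing component, and iterating \emph{backwards} along its orbit, rescaling the stable parametrizations, and passing to a normal limit in the canonical family $\Psi^s$ yields an unbounded connected component of $\{G^-\circ\widehat\psi_\infty\le\varepsilon\}$, directly contradicting unbridgedness.

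Your sketch has concrete gaps at exactly the points that matter. The sentence ``if this configuration $(\Delta^s,\Delta^u)$ arises as a genuine limit of the local product structure of $J^\star$, then $\mu^+\wedge\mu^-$ charges every neighborhood of $p$'' is the entire content of the theorem, not a step toward it, and you give no mechanism for it. Nothing guarantees that $p$ lies in the open neighbourhood $U$ of $J^\star$ where the hyperbolic splitting and cone fields extend, so your limiting disks tangent to $E^{s/u}$ need not exist as described. And the final sentence---``inserting this unbridgedness statement at the two points where the argument of \cite{F} breaks down should close the gap''---is a hope rather than an argument: you never identify the two points or explain how unbridgedness is applied. In the paper the mechanism is specific: unbridgedness is used once to force every point of $J^+\cap\Delta^2_R$ onto a local stable manifold of some point of $J^\star$, and once at the end to derive a contradiction from the unbounded sublevel set of $G^-$ on a limit stable manifold obtained by backward iteration and normal convergence. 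Neither step appears in your outline.
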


It turns out that the hyperbolicity assumption can be significantly weakened. Instead of requiring that the map is uniformly hyperbolic on $J^\star$, the proof merely requires that $f$ is \emph{quasi-contracting}, see \cite{BS8}. As a consequence we obtain the following two results.

\begin{corollary}
Let $f$ be a substantial dissipative H\'enon map, and assume that $f$ is either quasi-hyperbolic, or admits a dominated splitting on $J^\star$. Then $J = J^\star$.
\end{corollary}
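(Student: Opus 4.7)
The plan is to show that under either hypothesis the map $f$ is quasi-contracting in the sense of \cite{BS8}, and then to invoke the strengthening of Theorem~\ref{maintheorem} announced in the paragraph immediately above, in which uniform hyperbolicity on $J^\star$ is replaced by quasi-contraction. Both implications then reduce to a single verification.

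In the quasi-hyperbolic case the verification is immediate from the definition in \cite{BS8}: quasi-hyperbolicity is by definition the conjunction of quasi-expansion and quasi-contraction, so quasi-contraction holds a fortiori. No use of substantial dissipativity is needed at this step.

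In the dominated splitting case, write the invariant splitting as $E^s \oplus E^u$ on $J^\star$. Since $\mathrm{Jac}(f)$ is constant, the determinant of $Df^n$ along $E^s_x \oplus E^u_x$ has modulus $|\mathrm{Jac}(f)|^n$; a standard feature of dominated splittings is that the bundles are continuous and transverse, which on the compact set $J^\star$ forces the angle between $E^s$ and $E^u$ to be bounded below by some $\theta_0 > 0$. Expressing $|\mathrm{Jac}(f)|^n$ via the wedge product of $Df^n$ applied to unit vectors of $E^s$ and $E^u$ then yields the uniform comparison
\[
\|Df^n|_{E^s_x}\| \cdot \|Df^n|_{E^u_x}\| \asymp |\mathrm{Jac}(f)|^n.
\]
Combining this with the defining domination bound $\|Df^n|_{E^s_x}\| \leq C \lambda^n \|Df^n|_{E^u_x}\|$ for some $\lambda \in (0,1)$ gives $\|Df^n|_{E^s_x}\| \leq C' (\lambda\,|\mathrm{Jac}(f)|)^{n/2}$, which under substantial dissipativity produces uniform exponential contraction on $E^s$. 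A uniformly contracted continuous $Df$-invariant line field over $J^\star$ is in particular quasi-contracting.

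The main point to be careful about is the compatibility between the dominated line field $E^s$ given by hypothesis and the ``stable direction'' built into the quasi-contracting framework of \cite{BS8}: the latter is constructed from the dynamics rather than prescribed, so one must check that the strengthened Theorem~\ref{maintheorem} indeed applies once a uniformly contracted continuous invariant line field is present. This should boil down to uniqueness of such a contracted direction, but it is the step at which the abstract quasi-contracting construction must be matched against the geometric input of domination. Once this compatibility is confirmed, both statements of the corollary follow.
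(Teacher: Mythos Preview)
Your plan rests on a misreading of the announced strengthening. The paper does not prove a theorem of the form ``substantially dissipative $+$ quasi-contracting $\Rightarrow J=J^\star$''; the operative result is Theorem~\ref{maintheorem2}: a \emph{stably-unbridged} quasi-contracting H\'enon map satisfies $J=J^\star$. Substantial dissipativity is not carried along as a standing hypothesis next to quasi-contraction---it is spent, via Wiman's theorem, precisely in proving stably-unbridgedness. In the quasi-hyperbolic case this is Theorem~\ref{boundedlevset}, whose proof genuinely uses \emph{both} families $\Psi^s$ and $\Psi^u$: the unstable side enters through the orders $\tau^u$, through Lemma~\ref{tangency}, and through the growth $\|df^n v_0^u\|\to\infty$ needed in Lemma~\ref{boundedangles}. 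Quasi-contraction alone would not run that argument. In the dominated-splitting case the paper does not run an internal argument at all but cites \cite{LP2} for the fact that substantially dissipative maps with a dominated splitting on $J^\star$ are stably-unbridged, and then again applies Theorem~\ref{maintheorem2}.

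Your dominated-splitting computation that $E^s$ is uniformly contracted is fine, but observe that it uses only $|\mathrm{Jac}(f)|<1$, not $|\mathrm{Jac}(f)|<1/d^2$; this already signals that the substantial-dissipativity hypothesis is being placed in the wrong step. Even granting that uniform contraction of a continuous invariant line field yields quasi-contraction in the sense of \cite{BS8} (the step you correctly flag as delicate), you would still owe a proof of stably-unbridgedness, and it is there---through a growth estimate of order strictly less than $1/2$ on $G^-\circ\psi_p$ and Wiman's theorem, as in Lemma~\ref{boundedangles} or in \cite{LP2}---that the inequality $|\mathrm{Jac}(f)|<1/d^2$ actually does its work.
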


We note that in the dominated splitting setting the conclusion $J = J^\star$ was also obtained in \cite{LP2}, using completely different methods, under the stronger assumption that $f$ admits a dominated splitting on $J$. In \cite{D} the equality $J=J^\star$ was obtained under the assumption that the current $\mu^-$ has \emph{no degree growth} on a large bidisk. There are clear analogies between our approach and the proof given in \cite{D}, see Remark \ref{dujardin} for more details.

The notion of quasi-expansion was introduced in \cite{BS8}, and is strictly weaker than the assumption that $f$ is uniformly hyperbolic on $J^\star$. We will review the definition and properties of quasi-expanding H\'enon maps in sections \ref{section:quasi} and \ref{section:maximal}.

The proof of our main result will be completed in section \ref{section:proof}. We closely follow the approach from \cite{F}. The fact that stable manifold are unbridged plays an important role in the description of $J^+$ given in Lemma \ref{lemma:5.5}. As a consequence if $q \in J \setminus J^\star$, then $q \in W^s(p)$ for some point $p \in J^\star$. It follows then that $q$ is contained in a subset $U \subset W^s(q_0)$, open in the intrinsic topology, where $G^- \equiv 0$. We can conclude that there also exists a point $q_1 \in J^\star$ in the closure $\overline{U}\subset W^s(q_0)$. By iterating backwards and passing to a normal limit of the stable manifolds $W^s(f^{-n}(q_1))$ we obtain a contradiction with the fact that stable manifolds are unbridged.

We first discuss background on quasi-hyperbolicity in section \ref{section:quasi}. In section \ref{section:maximal} we continue the study of these maps, introducing a maximal normal family of parametrizations of stable manifolds. In section \ref{section:unbridged} we show that the substantial dissipativity assumption implies that these stable manifolds are all unbridged.

\medskip

\noindent {\bf Acknowledgment.} The authors would like to thank Eric Bedford for the inspiring discussions that stimulated this research.

\section{Quasi-hyperbolic H\'enon maps}\label{section:quasi}

\noindent{\bf H\'enon maps} We will refer to a H\'enon map $f$ as a finite composition of maps of the form
$$
(x,y) \rightarrow (p_i(x) - \delta_i y, x),
$$
where each $p_i$ is a polynomial of degree at least $2$, and each $\delta_i \in \mathbb C\setminus\{0\}$. The algebraic degree of $f$, denoted by $d$, is the product of the degrees of the polynomials $p_i$.

Complex H\'enon maps were first studied in \cite{Hu}. It was shown in \cite{FM} that every polynomial automorphism of $\mathbb C^2$ with non-trivial dynamics is conjugate to a composition of maps as above. For more background on the dynamics of H\'enon maps we refer the reader to \cite{BS1} and later papers in the series. We recall the notation
$$
\begin{aligned}
K^+ &= \{z \in \mathbb C^2 : \{f^n(z)\}_{n \in \mathbb N}  \text{ is bounded }\},\\
J^+ &= \partial K^+,
\end{aligned}
$$
and $K^-$ and $J^-$ similarly.

A rough but useful description of the global dynamics is provided by the \emph{filtration}. For $R>0$ define the sets
$$
\begin{aligned}
V^+ & = \{(x,y) \mid |x| \ge \max(|y|, R)\}\\
V^- & = \{(x,y) \mid |y| \ge \max(|x|, R)\}, \; \; \mathrm{and}\\
\Delta^2_R & = \{(x,y) \mid \max(|x|,|y|) < R\}.
\end{aligned}
$$
For $R$ sufficiently large it follows that $f(V^+) \subset V^+$ and all forward orbits in $V^+$ converge to $[1:0:0] \in \ell^\infty$, while $f^{-1}(V^-) \subset V^-$ and all backward orbits in $V^-$ converge to $[0:1:0]$. An immediate consequence of the filtration structure is that the set $K=K^+\cap K^-$ is bounded.

The notion of quasi-hyperbolicity was introduced in \cite{BS8} as a natural generalization of semi-hyperbolicity to the two-dimensional setting.  Several equivalent definitions of quasi-hyperbolicity were given. The definition we will adopt uses the pluri-complex Green's functions $G^+$ and $G^-$. We recall their definitions and some of their properties.

\begin{definition}
Let $f:\mathbb C^2\rightarrow \mathbb C^2$ be a H\'enon map. We define the \textit{pluri-complex Green's functions} $G^{+/-}$ as
\[
G^{+/-}(z):=\lim_{n\to\infty} \frac{\log^+ \Vert f^{+n/-n}(z)\Vert}{d^n},
\]
where $\log^+(t)=\max\{\log(t),0\}$ for $t\geq 0$.
\end{definition}

The functional equations
\[
G^+(f(z))=dG^+(z),\qquad G^-(f^{-1}(z))=dG^-(z)
\]
follow directly from the definitions.

\begin{theorem}
The function $G^+$ is characterized by the properties
\begin{itemize}
\item $G^+$ is continuous on $\mathbb C^2$, $G^+ \equiv 0$ on $K^+$ and $G^+>0$ on $\mathbb C^2\setminus K^+$;
\item $G^+$ is pluri-subharmonic on $\mathbb C^2$. In addition to this, $G^+$ is pluri-harmonic on $\mathbb C^2\setminus K^+$;
\item $G^+(z)\leq \log\Vert z\Vert+O(1)$ and $\limsup_{z\to\infty}G^+(z)/\log\Vert z\Vert=1$.
\end{itemize}
Analogous properties hold for $G^-$.
\end{theorem}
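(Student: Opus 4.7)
The plan is to verify each property directly from the defining limit, then establish uniqueness via a maximum principle argument comparing any other solution to $G^+$ itself.

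For existence and continuity I would use the filtration. On $V^+$ a direct computation based on the leading-term asymptotic $f(x,y) \sim (p_i(x),x)$ gives $\log\Vert f(z)\Vert = d\log\Vert z\Vert + O(1)$ uniformly for $\Vert z\Vert \ge R$. Setting $g_n(z) := \log^+\Vert f^n(z)\Vert/d^n$, the telescoping differences $g_{n+1} - g_n$ are then $O(d^{-n-1})$ on $V^+$, which is summable, so $g_n$ converges locally uniformly on $V^+$ and on its backward iterates inside the escape set. On $K^+$ orbits are bounded, so $g_n \to 0$ pointwise. Each $g_n$ is continuous and pluri-subharmonic (as the maximum of $0$ and a pluri-subharmonic function), and the limit $G^+$ is pluri-subharmonic. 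Since $G^+ \ge 0$ and $G^+ \equiv 0$ on $K^+$, upper semi-continuity of a pluri-subharmonic function gives continuity at every point of $K^+$. Positivity off $K^+$ is clear since any escaping orbit eventually enters $V^+$, where the explicit asymptotics force a positive limit.

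For pluri-harmonicity on $\mathbb{C}^2 \setminus K^+$, I would invoke the B\"ottcher coordinate construction: a holomorphic map $\phi_+ : V^+ \to \mathbb{C}$ satisfying $\phi_+ \circ f = \phi_+^d$ and $\log|\phi_+(z)| = G^+(z)$, which realises $G^+$ locally as the real part of a holomorphic function on $V^+$. The functional equation $G^+ \circ f = d\cdot G^+$ propagates pluri-harmonicity to the entire escape set, since every point of $\mathbb{C}^2 \setminus K^+$ enters $V^+$ after finitely many iterations. The growth bounds then follow from the explicit asymptotics $G^+(z) = \log\Vert z\Vert + O(1)$ on $V^+$ together with boundedness of $G^+$ on any bounded region of $\mathbb{C}^2 \setminus V^+$: this yields both $G^+(z) \le \log\Vert z\Vert + O(1)$ globally and $\limsup_{z\to\infty} G^+(z)/\log\Vert z\Vert = 1$.

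The main obstacle is uniqueness. Suppose $H$ satisfies all three properties and set $u := H - G^+$. Then $u$ is continuous on $\mathbb{C}^2$, identically zero on $K^+$, pluri-harmonic (hence harmonic as a function on $\mathbb{R}^4$) on $\mathbb{C}^2 \setminus K^+$, and $u(z)/\log\Vert z\Vert \to 0$ at infinity because both $H$ and $G^+$ satisfy the same normalized $\limsup$ condition. For arbitrary $\varepsilon > 0$, the growth of $G^+$ gives $|u| \le \varepsilon G^+$ on $\partial B_R$ once $R$ is sufficiently large, while on the boundary component $J^+ \cap \overline{B_R}$ both $u$ and $G^+$ vanish. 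The maximum principle applied to the harmonic functions $u - \varepsilon G^+$ and $-u - \varepsilon G^+$ on the bounded open set $B_R \cap (\mathbb{C}^2 \setminus K^+)$ then yields $|u| \le \varepsilon G^+$ throughout this set, and letting $\varepsilon \to 0$ forces $u \equiv 0$ off $K^+$; combined with $u|_{K^+} = 0$ this gives $H = G^+$. The delicate point is verifying the comparison $|u| \le \varepsilon G^+$ at large scales, which depends essentially on the $\limsup$ normalization being part of the hypotheses.
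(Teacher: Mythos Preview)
The paper does not prove this theorem: it is stated without proof as background material, recalling standard facts about the pluri-complex Green's function (the original arguments are in the Bedford--Smillie papers and Hubbard's notes). So there is no proof in the paper to compare against. That said, your existence arguments (uniform convergence on $V^+$ via telescoping, propagation by the functional equation, pluri-harmonicity via the B\"ottcher coordinate, growth bounds) are the standard ones and are fine.

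Your uniqueness argument, however, has a genuine gap. From $\limsup_{z\to\infty} H(z)/\log\Vert z\Vert = 1$ and the same for $G^+$, you \emph{cannot} conclude that $u(z)/\log\Vert z\Vert \to 0$: equal limsups say nothing about the difference. Concretely, the hypotheses give only $0 \le H(z) \le \log\Vert z\Vert + O(1)$ and likewise for $G^+$, so all you get is $|u(z)| \le \log\Vert z\Vert + O(1)$, which is far too weak for $|u| \le \varepsilon G^+$ on $\partial B_R$. There is a second issue: on $\partial B_R$ the function $G^+$ is not uniformly comparable to $\log R$, since $\partial B_R$ meets points arbitrarily close to $J^+$ where $G^+$ is tiny, so the comparison $|u| \le \varepsilon G^+$ cannot be obtained from growth estimates alone. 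A correct uniqueness proof typically uses either the extremal (Siciak-type) characterization of $G^+$ among PSH functions of logarithmic growth vanishing on $K^+$, or the functional equation $G^+\circ f = d\,G^+$ together with the explicit asymptotics on $V^+$; neither ingredient appears in your argument.
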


We define the $(1,1)$-currents $\mu^+$ and $\mu^-$ by
\begin{equation}
\label{currents}
\mu^+:=dd^c G^+,\qquad \mu^-:=dd^c G^-.
\end{equation}
One has $supp(\mu^+)=J^+$ and $supp(\mu^-)=J^-$. The wedge product $\mu:=\mu^+\wedge\mu^-$ defines an $f$-invariant probability measure, called the equilibrium measure. We let
\[
J^\star:=supp(\mu).
\]
It is clear that $J^\star\subset J$, but it is not known whether the two sets are equal for all H\'enon maps. Corollary 9.3 of \cite{BLS} describes the set $J^\star$ in terms of the saddle points of $f$, i.e. the hyperbolic periodic points of $f$.
\begin{theorem}
\label{closuresaddle}
$J^\star$ is the closure of the set of saddle points.
\end{theorem}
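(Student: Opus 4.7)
The plan is to prove both inclusions $\overline{\mathrm{Saddle}(f)} \subseteq J^\star$ and $J^\star \subseteq \overline{\mathrm{Saddle}(f)}$ by first showing that the equilibrium measure $\mu$ is hyperbolic in the Pesin sense, and then combining Pesin--Katok theory with the laminar structure of the currents $\mu^\pm$ near periodic saddles.

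The first step is to establish hyperbolicity of $\mu$. Since $\mu$ is the measure of maximal entropy, $h_\mu(f)=\log d$. Applying Ruelle's inequality to $f$ yields $\log d \leq \lambda^+$, and applying it to $f^{-1}$ yields $\log d \leq -\lambda^-$, where $\lambda^\pm$ are the two Lyapunov exponents of $(f,\mu)$. Hence $\lambda^+ \geq \log d$ and $\lambda^- \leq -\log d$, so both exponents are bounded away from zero and $\mu$ is (non-uniformly) hyperbolic.

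For the inclusion $\overline{\mathrm{Saddle}(f)} \subseteq J^\star$, I would argue that at a saddle $p$ the local stable manifold $W^s_{\mathrm{loc}}(p)$ lies in $J^+$ and the local unstable manifold $W^u_{\mathrm{loc}}(p)$ lies in $J^-$. In a neighborhood of $p$ the currents $\mu^+$ and $\mu^-$ decompose laminarly along these invariant manifolds, so their transverse intersection at $p$ forces $\mu=\mu^+\wedge\mu^-$ to have positive mass in every neighborhood of $p$. Thus $p \in \mathrm{supp}(\mu)=J^\star$, and the inclusion follows by taking closures.

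For the reverse inclusion $J^\star \subseteq \overline{\mathrm{Saddle}(f)}$, I would invoke Katok's closing and approximation theorem for hyperbolic ergodic measures of $C^{1+\alpha}$ diffeomorphisms: $\mu$-generic points are shadowed by periodic orbits whose local multipliers inherit the Lyapunov exponent estimates of $\mu$, and these periodic orbits are therefore saddles. Their density in $\mathrm{supp}(\mu)$ gives the inclusion. The main obstacle is not the hyperbolicity of $\mu$, which drops out of Ruelle's inequality, nor Katok's theorem, which is a black box; the delicate point is establishing that $\mu^\pm$ are genuinely laminar along stable/unstable manifolds of saddles with transverse measures that charge the saddles themselves. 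Building this geometric picture is the core content of the Bedford--Lyubich--Smillie program, and it is what makes the first inclusion more subtle than it may initially appear.
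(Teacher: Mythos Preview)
The paper does not give its own proof of this statement: it is quoted as Corollary~9.3 of \cite{BLS} and used as a black box throughout. So there is nothing in the paper to compare your argument against beyond the citation itself.

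That said, your sketch is a faithful outline of the Bedford--Lyubich--Smillie proof. The hyperbolicity of $\mu$ via Ruelle's inequality applied to $f$ and $f^{-1}$, Katok's closing lemma for the inclusion $J^\star \subseteq \overline{\mathrm{Saddle}(f)}$, and the laminar structure of $\mu^\pm$ near saddles for the reverse inclusion are exactly the ingredients of \cite{BLS}, and you correctly single out the laminarity/transverse-intersection step as the nontrivial one. Two small points worth making explicit: first, Katok's theorem is stated for $C^{1+\alpha}$ diffeomorphisms of compact manifolds, so one must either restrict to a compact invariant neighborhood of $K$ or check that the Pesin/Katok machinery goes through because $\mathrm{supp}(\mu)\subset K$ is compact, as \cite{BLS} does; second, your Ruelle argument silently uses that not both exponents can have the same sign (else one of the two Ruelle inequalities would force $h_\mu\le 0$), which is what lets you isolate $\lambda^+$ and $-\lambda^-$ separately. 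Neither is a gap, just a place where a grader might ask for one more line.
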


\emph{Quasi-expansion} is defined in terms of the existence of a normal family of parameterizations contained in $J^-$. Let $\mathcal S\subset J^\star$ be a dense, $f$-invariant set and suppose that for every $p\in\mathcal S$ there exists a injective immersion $\xi_p:\mathbb C\rightarrow \mathbb C^2$ such that
\begin{equation}
\label{p1}
p\in\xi_p(\mathbb C),\qquad \xi_p(\mathbb C)\subset J^-.
\end{equation}
In addition suppose that
\begin{equation}
\label{p2}
f(\xi_p(\mathbb C))=\xi_{f(p)}(\mathbb C),
\end{equation}
and that, for every $p_1$ and $p_2$ in $\mathcal S$, $\xi_{p_1}(\mathbb C)$ and $\xi_{p_2}(\mathbb C)$ are either disjoint or they coincide, i.e.
\begin{equation}
\label{p3}
\xi_{p_1}(\mathbb C)\cap\xi_{p_2}(\mathbb C)\neq \emptyset\Rightarrow\xi_{p_1}(\mathbb C)=\xi_{p_2}(\mathbb C).
\end{equation}
For every $p\in\mathcal S$, we write $\Psi_p$ for the set of all the maps of the form $\psi_p(\zeta)=\xi_p(a\zeta+b)$, satisfying the normalization properties
\begin{equation}
\label{normalizationprop}
\psi_p(0)=p,\qquad \max_{|\zeta|\leq 1}G^+\circ \psi_p(\zeta)=1.
\end{equation}
Two distinct elements of $\Psi_p$ are equal up to a rotation. We further let $\Psi_{\mathcal S}=\bigcup_{p\in\mathcal S}\Psi_p$.

\begin{definition}
A H\'enon map $f$ is \textit{quasi-expanding} if there exists $\mathcal S\subset J^\star$ and $\Psi_\mathcal S$ ad above, such that $\Psi_\mathcal S$ is normal. A map $f$ is \textit{quasi-contracting} if $f^{-1}$ is quasi-expanding, and \textit{quasi-hyperbolic} if it is both quasi-expanding and quasi-contracting.
\end{definition}

We recall two natural choices of a $f$-invariant set $\mathcal S$, dense in $J^\star$, such that at every point $p\in\mathcal S$ there exists an injective map satisfying  \eqref{p1}, \eqref{p2} and \eqref{p3}.

\begin{example}[Saddle Points]
We write $\mathcal S_1$ for the set of the saddle points. $\mathcal S_1\subset J^\star$ is  a dense and $f$-invariant set. For every $p\in\mathcal S_1$, by the (Un)Stable Manifold Theorem it follows that the unstable set
\[
\mathbb W^u(p)=\{z\in\mathbb C^2|\,\Vert f^{-n}(z)-f^{-n}(p)\Vert\to 0\}
\]
is conformally equivalent to $\mathbb C$. Therefore there exists an injective parametrization $\xi_p:\mathbb C\rightarrow \mathbb W^u(p)$ and by Theorem 1 of \cite{BS2} $\mathbb W^u(p)\subset J^-$. By the definition of unstable set, the collection of all those injective parameterizations satisfies \eqref{p1}, \eqref{p2} and \eqref{p3}. This set of parameterizations induce the family $\Psi_{\mathcal S_1}$ as described above.
\end{example}
\begin{example}[Recentered Unstable Manifold]Given a single saddle fixed point $p$, we define
$$
\mathcal S_2(p)=W^u(p)\cap W^s(p).
$$
By Theorem 9.6 of \cite{BLS} $\mathcal S_2(p)$ is dense in $J^\star$. It is clear that this set is also $f$-invariant.

We consider an injective parameterization $\xi_{p}:\mathbb C\rightarrow W^u(p)$. Given $q\in\mathcal S_2(p)$, we let $\Psi_q$ be the set of the maps of the form $\psi_q(\zeta)=\xi_{p}(a_q\zeta+b_q)$ which satisfy \eqref{normalizationprop}. Taking the union of all those local family we find the family $\Psi_{\mathcal S_2(p)}$ as described above.
\end{example}

In \cite{BS8} it was proved that the condition of being quasi-expanding is independent from the choice of the set $\mathcal S$. In particular it follows that:

\begin{proposition}
\label{quasiexpsaddlepoint}
If $f$ is quasi-expanding then the families $\Psi_{\mathcal S_1}$ and $\Psi_{\mathcal S_2(p)}$ are normal families.
\end{proposition}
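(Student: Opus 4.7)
The plan is to reduce the claim to the independence-of-$\mathcal S$ statement proved in \cite{BS8}: since quasi-expansion does not depend on the particular choice of the dense, $f$-invariant set $\mathcal S\subset J^\star$ used to witness it, it suffices to verify that both $\mathcal S_1$ and $\mathcal S_2(p)$ are admissible choices, together with their associated normalized families of parameterizations. Once admissibility is checked, normality of $\Psi_{\mathcal S_1}$ and $\Psi_{\mathcal S_2(p)}$ is an immediate consequence of the hypothesis that $f$ is quasi-expanding with respect to some (a priori different) $\mathcal S$.

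For $\mathcal S_1$ I would first note that density in $J^\star$ is exactly Theorem~\ref{closuresaddle}, and that $f$-invariance of the saddle set is tautological. For each saddle $p$, the Stable Manifold Theorem furnishes an injective parametrization $\xi_p:\mathbb C\to\mathbb W^u(p)$, Theorem~1 of \cite{BS2} gives $\mathbb W^u(p)\subset J^-$ verifying \eqref{p1}, the equivariance $f(\mathbb W^u(p))=\mathbb W^u(f(p))$ yields \eqref{p2}, and the fact that two distinct unstable manifolds of saddle points are disjoint yields \eqref{p3}. The normalizations in \eqref{normalizationprop} are then realized by an affine reparametrization of $\xi_p$; non-degeneracy comes from $p\in J^+$ together with $\xi_p(\mathbb C)\not\subset K^+$, since otherwise $\mathbb W^u(p)\subset K^+\cap K^-=K$ would contradict the unboundedness of $\mathbb W^u(p)$.

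For $\mathcal S_2(p)$ density in $J^\star$ is Theorem~9.6 of \cite{BLS}, and $f$-invariance follows from the invariance of $W^u(p)$ and $W^s(p)$ under $f$. In this case every parametrization $\psi_q$ for $q\in\mathcal S_2(p)$ is built from the single map $\xi_p:\mathbb C\to W^u(p)$ by an affine change of variable carrying $0$ to $q$, so the image $\psi_q(\mathbb C)=W^u(p)\subset J^-$ is the same for every $q$; properties \eqref{p1}, \eqref{p2} and \eqref{p3} are then immediate, with \eqref{p3} holding vacuously since all images coincide.

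Having verified that both choices of $\mathcal S$ are admissible, the proposition follows by invoking the independence statement from \cite{BS8}. The nontrivial content sits entirely in that cited independence result, which is what I would expect to be the real obstacle if one wanted a self-contained argument: given $p\in J^\star$ approximated by points $p_n$ in an already-good invariant set, one must extract a normal limit of $\psi_{p_n}$, use the normalization \eqref{normalizationprop} to prevent collapse to a constant, and then identify the image of the limit with $\mathbb W^u(p)$ or with $W^u(p)$ as appropriate — the identification step being the delicate one, since a priori a normal limit of parametrizations could produce a Riemann surface in $J^-$ through $p$ that is not the expected manifold.
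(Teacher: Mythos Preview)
Your proposal is correct and matches the paper's approach exactly: the paper does not give an independent proof but simply states the proposition as a consequence of the independence-of-$\mathcal S$ result from \cite{BS8}, after having verified in the two preceding examples that $\mathcal S_1$ and $\mathcal S_2(p)$ are admissible choices. Your added checks (non-degeneracy of the normalization, etc.) are more explicit than what the paper writes, but the substance is the same.
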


Consider a family $\Psi_{\mathcal{S}}$ satisfying the assumptions \eqref{p1} through \eqref{normalizationprop}, and for $r>0$ define
\begin{equation}
\label{capitalM}
M(r):=\sup_{\psi\in\Psi_\mathcal{S}}\max_{|\zeta|\leq r}G^+\circ \psi(\zeta).
\end{equation}
The function $M(r)$ is a convex, increasing function of $\log(r)$. By the normalization conditions \eqref{normalizationprop} one has $M(0)=0$, $M(1)=1$ and $M(r)>1$ when $r>1$.
For every $\psi_p\in \Psi_p$ and $\psi_{f(p)}\in\Psi_{f(p)}$, there exists a constant $\lambda_p$ such that
\begin{equation}
\label{functionaleq}
f\circ\psi_p(\zeta)=\psi_{f(p)}(\lambda_p\cdot \zeta)
\end{equation}
Notice that $|\lambda_p|$ does not depend from the choice of $\psi_p\in\Psi_p$ and $\psi_{f(p)}\in\Psi_{f(p)}$.

By Theorem 1.2 of \cite{BS8} we obtain the following list of equivalent definitions of quasi-expansion.
\begin{theorem}
\label{equivalentdefquasihyp}
The following are equivalent:\begin{itemize}
\item[(i)] $\Psi_\mathcal S$ is a normal family.
\item [(ii)]$M(r)<\infty$ for all $r<\infty$.
\item[(iii)]$M(r_0)<\infty$ for some $r_0>1$.
\item[(iv)] there exists a constant $\kappa>1$ such that $|\lambda_p|\geq\kappa$ for all $p\in\mathcal S$.
\end{itemize}
\end{theorem}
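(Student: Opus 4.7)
The plan is to prove the circle of implications (i) $\Rightarrow$ (ii) $\Rightarrow$ (iii) $\Rightarrow$ (iv) $\Rightarrow$ (i). The implication (i) $\Rightarrow$ (ii) follows because each $\psi \in \Psi_{\mathcal S}$ satisfies $\psi(0) = p \in J^\star \subset K$, which is bounded; hence no normal limit can be identically infinite and the family is locally uniformly bounded, so continuity of $G^+$ gives $M(r) < \infty$ for every finite $r$. The implication (ii) $\Rightarrow$ (iii) is trivial.

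For (iii) $\Rightarrow$ (iv), combine the functional equation \eqref{functionaleq} with $G^+ \circ f = d \cdot G^+$ to obtain
\[
G^+ \circ \psi_{f(p)}(\eta) \;=\; d \cdot G^+ \circ \psi_p(\eta / \lambda_p).
\]
Taking the maximum over $|\eta| \leq |\lambda_p|$ and using the normalization $\max_{|\zeta| \leq 1} G^+ \circ \psi_p(\zeta) = 1$ produces $M(|\lambda_p|) \geq d$. Because $G^+ \circ \psi$ is subharmonic for every $\psi$, the function $\max_{|\zeta|\leq r} G^+ \circ \psi(\zeta)$ is convex in $\log r$, and hence so is $M$. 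Together with monotonicity and $M(r_0)<\infty$ this forces $M$ to be finite on $[0,r_0]$ and therefore continuous at $r=1$. Since $M(1) = 1 < 2 \leq d$, there exists $\kappa > 1$ with $M(\kappa) < d$, and the estimate $M(|\lambda_p|)\geq d$ then yields $|\lambda_p| \geq \kappa$ uniformly in $p$.

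For (iv) $\Rightarrow$ (i), the key geometric observation is that the set $K_1 := J^- \cap \{G^+ \leq 1\}$ is compact: any sequence in $K_1$ escaping to infinity would eventually enter $V^+ \cup V^-$ by the filtration, but $V^+$ is excluded because $G^+ \to \infty$ within $V^+$, and $V^-$ is excluded because points of $V^-$ have backward orbits escaping, so $V^- \cap K^- = \emptyset$ while $J^- \subset K^-$. By the maximum principle the normalization together with $\psi(\mathbb C) \subset J^-$ forces $\psi(\{|\zeta| \leq 1\}) \subset K_1$ for every $\psi \in \Psi_{\mathcal S}$. For any $q = f(p) \in \mathcal S$ the functional equation then gives
\[
\psi_q\bigl(\{|\eta| \leq |\lambda_p|\}\bigr) \;=\; f\bigl(\psi_p(\{|\zeta|\leq 1\})\bigr) \;\subset\; f(K_1),
\]
and by (iv) this already contains $\psi_q(\{|\eta|\leq \kappa\})$. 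Iterating $n$ times yields $\psi(\{|\zeta|\leq \kappa^n\}) \subset f^n(K_1)$, a fixed compact set, for every $\psi \in \Psi_{\mathcal S}$. Hence $\Psi_{\mathcal S}$ is uniformly bounded on every compact of $\mathbb C$, and Montel's theorem gives normality.

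The main technical step is the final implication (iv) $\Rightarrow$ (i), where one must upgrade the unit-disk control coming from the normalization to a uniform bound on every disk by pushing forward through iterates of $f$; the expansion estimate $|\lambda_p| \geq \kappa$ is precisely what closes the induction. The two supporting points are the compactness of $K_1$ (via the filtration) and the continuity of $M$ at $r=1$ used in (iii) $\Rightarrow$ (iv), both of which are short but essential.
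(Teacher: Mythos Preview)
The paper does not give its own proof of this theorem; it simply quotes the statement from \cite{BS8} (Theorem~1.2 there). So there is no argument in the paper to compare yours against. That said, your circle of implications is correct and is essentially the standard argument.

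A couple of places deserve a word of tightening. In (iii) $\Rightarrow$ (iv), your continuity claim is fine once stated precisely: $t\mapsto M(e^t)$ is convex and finite on $(-\infty,\log r_0]$ (finiteness for $t\le 0$ comes from monotonicity and $M(1)=1$), hence continuous at $t=0$, which gives the $\kappa>1$ with $M(\kappa)<d$. In (iv) $\Rightarrow$ (i), the compactness of $K_1=J^-\cap\{G^+\le 1\}$ is most cleanly obtained by noting $J^-\subset K^-=\{G^-=0\}$ and that $\{G^++G^-\le c\}$ is compact for every $c$; your filtration argument also works, but you should make explicit that on $V^+$ one has $G^+(x,y)\ge \log|x|-C$, so that $\{G^+\le 1\}\cap V^+$ is bounded. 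Finally, in the iteration step you are implicitly using that $\mathcal S$ is $f$-invariant (so every $q\in\mathcal S$ is $f(p)$ for some $p\in\mathcal S$); this is part of the standing hypotheses on $\mathcal S$ but is worth stating, since otherwise the bound $\psi(\{|\zeta|\le\kappa^n\})\subset f^n(K_1)$ would only be available for $\psi\in\Psi_{f^n(\mathcal S)}$.
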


\begin{remark}[Notation remark]
Given a quasi-expanding map $f$. We denote as $\widehat\Psi_\mathcal S$  the set of all normal limits of the family, and for $p \in J^\star$ we define
\[
\widehat\Psi_p:=\{\widehat\psi\in\widehat\Psi_\mathcal S|\,\widehat\psi(0)=p\}.
\]
From now on the hat-notation will be used in order to distinguish between the injective parameterizations in $\Psi_\mathcal S$ and the normal limits in $\widehat\Psi_\mathcal S$.
\end{remark}
The two following propositions follow from Proposition 1.1, Proposition 1.3 and Proposition 3.1 of \cite{BSh}.

\begin{proposition}
\label{normpropnormfam}
If $f$ is quasi-expanding then $\widehat \Psi_\mathcal S$ is compact. Every $\widehat\psi_p\in\widehat\Psi_p$ satisfies \eqref{normalizationprop}. If we take $\widehat\psi_{p_1}\in\widehat\Psi_{p_1}$ and $\widehat\psi_{p_2}\in\widehat\Psi_{p_2}$ we have that
\[
\widehat\psi_{p_1}(\mathbb C)\cap\widehat\psi_{p_2}(\mathbb C)\neq \emptyset\Rightarrow\widehat\psi_{p_1}(\mathbb C)=\widehat\psi_{p_2}(\mathbb C).
\]
\end{proposition}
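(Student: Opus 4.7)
The plan is to address the three assertions in turn, leveraging the characterization in Theorem \ref{equivalentdefquasihyp} throughout.

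For compactness, quasi-expansion means by Theorem \ref{equivalentdefquasihyp} that $\Psi_{\mathcal S}$ is a normal family, hence relatively compact in the compact-open topology on $\mathrm{Hol}(\mathbb C, \mathbb C^2)$, which is a metrizable space. The set $\widehat{\Psi}_{\mathcal S}$ is by construction the set of all normal limits, i.e.\ the sequential closure of $\Psi_{\mathcal S}$ in this topology. A diagonal extraction verifies that a normal limit of normal limits is again a normal limit, so $\widehat{\Psi}_{\mathcal S}$ is closed in a relatively compact set, and therefore compact.

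For the normalization, given $\widehat\psi_p \in \widehat\Psi_p$, take a sequence $\psi_{p_n} \in \Psi_{p_n}$ converging uniformly on compact subsets of $\mathbb C$ to $\widehat\psi_p$. Then $p_n = \psi_{p_n}(0) \to \widehat\psi_p(0) = p$, so $p_n \to p$. Continuity of $G^+$ combined with uniform convergence on $\overline{\mathbb D}$ allows one to pass to the limit in $\max_{|\zeta|\leq 1} G^+\circ \psi_{p_n} = 1$, yielding $\max_{|\zeta|\leq 1} G^+\circ \widehat\psi_p = 1$. Both parts of \eqref{normalizationprop} are thereby preserved. As a byproduct $\widehat\psi_p$ is non-constant, since $G^+(p) = 0$ while $\max G^+\circ \widehat\psi_p = 1$.

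The main obstacle is the disjoint-or-coincide property. Suppose $\widehat\psi_{p_1}(\zeta_1) = \widehat\psi_{p_2}(\zeta_2) = q$. My plan is to compare the image germs at $q$. If they agree, then analytic continuation of a common local parametrization across the connected domain $\mathbb C$ forces $\widehat\psi_{p_1}(\mathbb C) = \widehat\psi_{p_2}(\mathbb C)$ globally. If the germs differ, they are distinct one-dimensional analytic curves in $\mathbb C^2$ sharing the point $q$, so they have positive and finite intersection multiplicity at $q$. This intersection multiplicity is stable under compact-open perturbation of the parametrizations, so any approximating sequences $\psi^1_n \to \widehat\psi_{p_1}$ and $\psi^2_n \to \widehat\psi_{p_2}$ drawn from $\Psi_{\mathcal S}$ must, for $n$ large, have images that still meet near $q$. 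The standing assumption \eqref{p3} on the family $\Psi_{\mathcal S}$ then forces $\psi^1_n(\mathbb C) = \psi^2_n(\mathbb C)$, and passing to the limit yields $\widehat\psi_{p_1}(\mathbb C) = \widehat\psi_{p_2}(\mathbb C)$, contradicting the distinct-germ case.

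The technical subtlety in this last step is that the limiting parametrizations may have critical points or degenerate derivatives at $q$, so invoking intersection multiplicity requires passing to the reduced image curve. This can be handled by a careful multiplicity count for plane analytic curves, but the cleanest route, and the one I would take in the writeup, is to quote Propositions 1.1, 1.3 and 3.1 of \cite{BSh}, which establish precisely the compactness, the preservation of the normalization, and the disjoint-or-coincide behaviour of image curves in this setting.
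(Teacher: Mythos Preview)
Your proposal is correct and ultimately takes the same route as the paper: the paper does not give an independent proof of this proposition at all, but simply states that it follows from Propositions 1.1, 1.3 and 3.1 of \cite{BSh}, exactly as you conclude. Your additional elaboration on compactness and on the preservation of \eqref{normalizationprop} is sound and more detailed than what the paper offers; your sketch of the disjoint-or-coincide argument via stability of intersection multiplicity is the right heuristic, and you correctly flag the branching/critical-point subtlety that makes the citation to \cite{BSh} the cleanest option.
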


Given $p\in J^\star$ and $\widehat\psi_p\in\widehat{\Psi}_p$ we let $W^u(p)=\widehat\psi_p(\mathbb C)$. Since $\widehat\psi_p$ is the limit of a sequence of maps whose image is contained in $J^-$ and since $J^-$ is closed, it follows that $W^u(p)\subset J^-$. We note that $W^u(p)$ is contained in but in general may not be equal to the unstable set of $p$.

\begin{proposition}
\label{injectiveparameter}
The set $W^u(p)$ does not depend on the choice of $\widehat\psi_p\in\widehat{\Psi}_p$. Moreover we have that
\begin{itemize}
\item[(i)] $W^u(p)$ is a nonsingular manifold and there exists an injective parametrization $\xi_p:\mathbb C\rightarrow W^u(p)\subset J^-$.
\item[(ii)] $W^u(p)\subset \mathbb W^u(p)$.
\item[(iii)] $f(W^u(p))=W^u(f(p))$.
\end{itemize}
\end{proposition}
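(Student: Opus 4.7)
The plan is to dispatch the well-definedness statement first, then verify (i), (ii), and (iii) in turn. For well-definedness, any two elements $\widehat\psi_p^1,\widehat\psi_p^2\in\widehat\Psi_p$ satisfy $\widehat\psi_p^i(0)=p$ by the normalization \eqref{normalizationprop}, so their images intersect at $p$; the disjointness-or-coincidence dichotomy in Proposition \ref{normpropnormfam} then forces $\widehat\psi_p^1(\mathbb C)=\widehat\psi_p^2(\mathbb C)$, so $W^u(p)$ is independent of the choice of parametrization.

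For part (i), I would first note that $\widehat\psi_p$ is non-constant: since $p\in J^\star\subset K^+$ we have $G^+\circ\widehat\psi_p(0)=0$, whereas \eqref{normalizationprop} requires the maximum of $G^+\circ\widehat\psi_p$ on $|\zeta|\leq 1$ to equal $1$, so $\widehat\psi_p$ must leave $K^+$. To upgrade non-constancy to injectivity and immersivity, I would invoke a Hurwitz-type result for normal limits of injective entire curves into $\mathbb C^2$ whose images lie in the closed set $J^-$; this is the content of Propositions 1.1 and 1.3 of \cite{BSh}. The containment $W^u(p)\subset J^-$ is then immediate because $J^-$ is closed and each $\psi\in\Psi_\mathcal S$ has image in $J^-$. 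This step is the main obstacle, and the one I would defer to \cite{BSh} rather than prove from scratch, since excluding degeneration of normal limits of entire curves in $\mathbb C^2$ is a genuinely delicate complex-analytic issue.

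For part (ii), the strategy is to iterate the functional equation \eqref{functionaleq} backwards. Applying \eqref{functionaleq} with $p$ replaced by $f^{-k}(p)$ and composing yields
\[
f^{-n}\bigl(\widehat\psi_p(\zeta)\bigr)=\widehat\psi_{f^{-n}(p)}\!\left(\zeta\Big/\prod_{k=1}^n\lambda_{f^{-k}(p)}\right).
\]
By Theorem \ref{equivalentdefquasihyp}(iv) the modulus of the denominator is at least $\kappa^n$, and by the compactness of $\widehat\Psi_\mathcal S$ (Proposition \ref{normpropnormfam}) the derivatives $\widehat\psi'_{f^{-n}(p)}(0)$ are uniformly bounded across $n$. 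Therefore
\[
\bigl\|f^{-n}(\widehat\psi_p(\zeta))-f^{-n}(p)\bigr\|=O(\kappa^{-n})\longrightarrow 0,
\]
which is exactly the statement $\widehat\psi_p(\zeta)\in\mathbb W^u(p)$. Finally, part (iii) is immediate from \eqref{functionaleq}: since $\lambda_p\neq 0$, one has $f(W^u(p))=f(\widehat\psi_p(\mathbb C))=\widehat\psi_{f(p)}(\lambda_p\mathbb C)=\widehat\psi_{f(p)}(\mathbb C)=W^u(f(p))$.
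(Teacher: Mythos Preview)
Your approach matches the paper's: the paper does not prove this proposition but simply records that it follows from Propositions 1.1, 1.3 and 3.1 of \cite{BSh}, and you likewise defer the nontrivial step (i) to those results while filling in the routine verifications of well-definedness, (ii) and (iii).

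Two small imprecisions in your write-up of (ii) are worth flagging. First, the functional equation \eqref{functionaleq} is stated only for elements of $\Psi_p$, not for normal limits in $\widehat\Psi_p$; extending it to $\widehat\psi_p$ requires passing to limits of sequences $\psi_{p_n}\to\widehat\psi_p$ and $\psi_{f^{-1}(p_n)}\to\widehat\psi_{f^{-1}(p)}$ exactly as in Lemma \ref{BresciaSuni} (which in the paper appears after this proposition but is logically independent of it). Second, to obtain $\|f^{-n}(\widehat\psi_p(\zeta))-f^{-n}(p)\|\to 0$ you need more than a uniform bound on $\widehat\psi'_{f^{-n}(p)}(0)$: the normal limits can have $\widehat\psi'(0)=0$. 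What you actually use is the equicontinuity of the compact family $\widehat\Psi_\mathcal S$ (equivalently, a uniform bound on the maps themselves on a fixed disk), which gives $\widehat\psi_{f^{-n}(p)}(w_n)-\widehat\psi_{f^{-n}(p)}(0)\to 0$ whenever $w_n\to 0$. With these adjustments your argument for (ii) is correct.
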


We can conclude that the family of parameterizations $\xi_p\rightarrow W^u(p)$ for $p\in J^\star$ satisfies the properties \eqref{p1}, \eqref{p2} and \eqref{p3}.

\begin{corollary}
\label{boundedlambda}
Let $f$ be quasi-expanding. Then there exist $K\geq\kappa>1$ such that for every $p\in \mathcal S$ we have $\kappa\leq|\lambda_p|\leq K$.
\end{corollary}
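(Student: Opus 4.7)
The lower bound $|\lambda_p|\ge\kappa$ is just the restatement of condition (iv) of Theorem \ref{equivalentdefquasihyp}, so nothing needs to be done there. The entire task is to produce a uniform upper bound $|\lambda_p|\le K$.

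My approach is to extract $\lambda_p$ from the functional equation \eqref{functionaleq} by differentiating at $\zeta=0$. Picking any $\psi_p\in\Psi_p$ and $\psi_{f(p)}\in\Psi_{f(p)}$, the chain rule gives
\[
Df(p)\cdot \psi_p'(0)=\lambda_p\cdot \psi_{f(p)}'(0),
\]
so
\[
|\lambda_p|\;=\;\frac{\opnorm{Df(p)\cdot \psi_p'(0)}}{\opnorm{\psi_{f(p)}'(0)}}\;\le\;\frac{\opnorm{Df(p)}\cdot \opnorm{\psi_p'(0)}}{\opnorm{\psi_{f(p)}'(0)}}.
\]
It therefore suffices to control the three quantities on the right uniformly over $p\in\mathcal S$.

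The numerator is easy: $\mathcal S\subset J^\star\subset K$ is relatively compact, and $Df$ is polynomial, so $\sup_{p\in\mathcal S}\opnorm{Df(p)}<\infty$. Likewise $\sup_{\psi\in\Psi_\mathcal S}\opnorm{\psi'(0)}<\infty$ follows immediately from compactness of $\widehat\Psi_\mathcal S$ (Proposition \ref{normpropnormfam}) together with continuity of the derivative-at-$0$ functional on the normal-family topology.

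The only delicate step is a uniform \emph{lower} bound $\inf_{\psi\in\Psi_\mathcal S}\opnorm{\psi'(0)}>0$; this is where a contradiction could in principle arise. Suppose to the contrary that there is a sequence $\psi_{p_n}\in\Psi_{\mathcal S}$ with $\psi_{p_n}'(0)\to 0$. By Proposition \ref{normpropnormfam} we may pass to a subsequence converging in $\widehat\Psi_\mathcal S$ to some $\widehat\psi\in\widehat\Psi_q$, and by continuity of differentiation under locally uniform convergence we get $\widehat\psi'(0)=0$. But Proposition \ref{injectiveparameter}(i) asserts that the image $W^u(q)=\widehat\psi(\mathbb C)$ is a \emph{nonsingular} manifold admitting an injective parametrization from $\mathbb C$, so $\widehat\psi$ is an injective immersion, in particular $\widehat\psi'(0)\ne 0$. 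This is a contradiction, so the desired lower bound holds. Combining the three bounds yields $|\lambda_p|\le K$ with $K$ depending only on $f$, as required.
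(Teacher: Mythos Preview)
Your reduction of the upper bound to a uniform lower bound on $\opnorm{\psi'(0)}$ over $\Psi_{\mathcal S}$ is correct, but the final step, where you derive that lower bound, has a genuine gap.

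You argue: if $\psi_{p_n}'(0)\to 0$, then a normal limit $\widehat\psi\in\widehat\Psi_q$ satisfies $\widehat\psi'(0)=0$, and this contradicts Proposition~\ref{injectiveparameter}(i). But Proposition~\ref{injectiveparameter}(i) only says that the \emph{image} $W^u(q)=\widehat\psi(\mathbb C)$ is a nonsingular curve admitting \emph{some} injective parametrization $\xi_q$. It does \emph{not} say that every $\widehat\psi\in\widehat\Psi_q$ is itself injective or an immersion. In fact the paper later writes $\widehat\psi_q=\xi_q\circ h_q$ for a polynomial $h_q$ (Corollary~\ref{polynomialconnection}) and explicitly allows $h_q$ to vanish to order $>1$ at $0$: this is precisely what the invariant $\tau(p)=\sup_{\widehat\psi\in\widehat\Psi_p}\operatorname{ord}(\widehat\psi)$ in~\eqref{tau} measures, and Lemma~\ref{taubound} only gives $\tau(p)\le m$, not $\tau(p)=1$. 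So $\widehat\psi'(0)=0$ is perfectly consistent with everything available, and your contradiction does not fire. In short, you have conflated ``the image is smooth'' with ``the given map is an immersion''.

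The paper's proof avoids derivatives entirely and instead uses the Green's function normalization. From $f\circ\psi_{p_n}=\psi_{f(p_n)}(\lambda_{p_n}\,\cdot\,)$ and $G^+\circ f=d\,G^+$ one gets
\[
\max_{|\zeta|\le |\lambda_{p_n}|} G^+\circ\psi_{f(p_n)}(\zeta)=d.
\]
If $|\lambda_{p_n}|\to\infty$, then any normal limit $\widehat\psi_{f(q)}$ of $\psi_{f(p_n)}$ has $G^+\circ\widehat\psi_{f(q)}\le d$ on all of $\mathbb C$; being bounded and subharmonic it is constant, hence $\equiv 0$, contradicting the normalization $\max_{|\zeta|\le 1}G^+\circ\widehat\psi_{f(q)}=1$ from Proposition~\ref{normpropnormfam}. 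This argument uses only the nonconstancy of normal limits (guaranteed by \eqref{normalizationprop}), not any injectivity, which is why it succeeds where yours does not.
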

\begin{proof}
Suppose on the contrary that there exists $p_n\in\mathcal S$, such that $|\lambda_{p_n}|\to\infty$. We take two sequences of functions $\psi_{p_n}\in\Psi_{p_n}$ and $\psi_{f(p_n)}\in\Psi_{f(p_n)}$, such that for every $n$ the functional equation \eqref{functionaleq} is satisfied.

By taking a subsequence of $p_n$ if necessary, by quasi-expansion and the compactness of $J^\star$ we may assume that $p_n\to q\in J^\star$, that $\psi_{p_n}\to\widehat\psi_q\in\widehat\Psi_q$ and that $\psi_{f(p_n)}\to\widehat\psi_{f(q)}\in\widehat\Psi_{f(q)}$.

By \eqref{functionaleq} we obtain that\begin{align*}
\max_{|\zeta|\leq \lambda_{p_n}}G^+\circ\psi_{f(p_n)}(\zeta)&=\max_{|\zeta|\leq 1}G^+\circ\psi_{f(p_n)}(\lambda_{p_n}\zeta)\\
&=\max_{|\zeta|\leq 1}G^+\circ f\circ\psi_{p_n}(\zeta)\\
&=d\max_{|\zeta|\leq 1}G^+\circ\psi_{p_n}(\zeta)\\
&=d.
\end{align*}
Given $R>0$ we can find a natural number $n_0$ such that $|\lambda_{p_n}|\geq R$ when $n\geq n_0$. It follows that $\max_{|\zeta|\leq R}G^+\circ\psi_{f(p_n)}(\zeta)\leq d$ for every $n\geq n_0$. It is clear that $G^+\circ\psi_{f(p_n)}\to G^+\circ\widehat\psi_{f(q)}$ locally uniformly on $\mathbb C$, therefore for every $R>0$
\[
\max_{|\zeta|\leq R} G^+\circ \widehat\psi_{f(q)}(\zeta)\leq d.
\]
In particular the subharmonic function $g(\zeta):=G^+\circ\widehat\psi_{f(q)}(\zeta)$ is bounded on $\mathbb C$. By the subharmonic version of Liouville's Theorem, $g$ is constant, and $g(\zeta)=g(0)=0$ for all $\zeta\in\mathbb C$. But this gives a contradiction, since by Proposition \ref{normpropnormfam} we have $\max_{|\zeta|\leq 1}g(\zeta)=1$.

\end{proof}

\section{Canonical family of parameterizations}\label{section:maximal}

Throughout this section we assume that the H\'enon map $f$ is quasi-expanding. Our goal in this section is to introduce a canonical family of parameterizations of unstable manifolds through all points in $J^\star$. This family will be independent of the set $\mathcal S\subset J^\star$ and the family $\Psi_\mathcal S$.

Given a normal limit $\widehat\psi\in\widehat\Psi_\mathcal S$ we let $ord(\widehat\psi):=\min\{k\geq 1|\,\widehat\psi^{(k)}(0)\neq 0\}$. Moreover, for every $p\in J^\star$, we let
\begin{equation}
\label{tau}
\tau(p)=\sup_{\psi_p\in\widehat\Psi_p}ord(\psi_p)
\end{equation}
\begin{lemma}
\label{taubound}
Let $f$ be quasi-expanding. Then there exists an $m<\infty$ such that $\tau(p)\leq m$ for all $p\in J^\star$.
\end{lemma}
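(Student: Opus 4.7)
The plan is to argue by contradiction, using compactness of $J^\star$ together with the compactness of the normal family $\widehat\Psi_\mathcal S$ guaranteed by Proposition \ref{normpropnormfam}. The idea is that if the orders of vanishing at the origin could grow without bound, then we could extract a normal limit all of whose Taylor coefficients at $0$ vanish, forcing that limit to be constant and hence violating the normalization $\max_{|\zeta|\le 1}G^+\circ\widehat\psi = 1$.

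Concretely, suppose for contradiction that no uniform bound $m$ exists. Then we can select a sequence $p_n\in J^\star$ and maps $\widehat\psi_{p_n}\in\widehat\Psi_{p_n}$ with $ord(\widehat\psi_{p_n})\to\infty$. Since $J^\star$ is compact, after passing to a subsequence $p_n\to p\in J^\star$. By Proposition \ref{normpropnormfam}, $\widehat\Psi_\mathcal S$ is compact, so a further subsequence yields $\widehat\psi_{p_n}\to\widehat\psi\in\widehat\Psi_p$ locally uniformly on $\mathbb C$. Since locally uniform convergence of holomorphic functions implies convergence of all derivatives at $0$, one has $\widehat\psi^{(k)}(0)=\lim_n\widehat\psi_{p_n}^{(k)}(0)$ for each $k\geq 1$. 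For each fixed $k$, once $n$ is large enough that $\tau(p_n)>k$ (in particular $ord(\widehat\psi_{p_n})>k$), the coefficient $\widehat\psi_{p_n}^{(k)}(0)$ vanishes. Hence $\widehat\psi^{(k)}(0)=0$ for every $k\geq 1$, so $\widehat\psi$ is identically constant, equal to $\widehat\psi(0)=p$.

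Now by Proposition \ref{normpropnormfam} every element of $\widehat\Psi_p$ satisfies the normalization \eqref{normalizationprop}, so in particular $\max_{|\zeta|\leq 1}G^+\circ\widehat\psi(\zeta)=1$. But if $\widehat\psi\equiv p$, then this maximum equals $G^+(p)$, which is $0$ because $p\in J^\star\subset K^+$. This contradiction shows that $\tau$ is bounded on $J^\star$.

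The only step that requires a little care is the passage to a limit inside $\widehat\Psi_\mathcal S$: one needs that a limit of elements of $\widehat\Psi_\mathcal S$ is itself in $\widehat\Psi_\mathcal S$, but this is a routine diagonal extraction given that each $\widehat\psi_{p_n}$ is itself a limit of members of $\Psi_\mathcal S$ and the family is normal. No sharper information about the dynamics, such as the functional equation \eqref{functionaleq} or Corollary \ref{boundedlambda}, seems to be needed for this lemma; the quasi-expansion hypothesis enters only through the compactness of $\widehat\Psi_\mathcal S$ and the persistence of the normalization in the limit.
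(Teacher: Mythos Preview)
Your argument is correct and follows essentially the same route as the paper: assume the orders are unbounded, pass to a convergent subsequence in the compact family $\widehat\Psi_\mathcal S$, and use convergence of derivatives to force the limit to be constant, contradicting the normalization \eqref{normalizationprop}. The paper's proof is slightly terser (it does not bother extracting a limit point $p\in J^\star$ or spelling out the diagonal argument), but the logic is the same.
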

\begin{proof}
By Proposition \ref{normpropnormfam}, every $\widehat\psi\in\widehat \Psi_\mathcal S$ satisfies \eqref{normalizationprop}. Thus $\widehat\psi$ is not a constant function, and $ord(\widehat\psi)<\infty$.

Suppose that there exist a sequence $\widehat\psi_n\in\widehat\Psi_\mathcal S$ such that $ord(\widehat\psi_n)\to\infty$. By taking a subsequence if necessary we may assume that $\widehat\psi_n\to\widehat\psi_\infty\in\widehat\Psi_\mathcal S$. By uniform convergence $\widehat\psi^{(k)}_n(0)\to\widehat\psi_\infty^{(k)}(0)$, it follows that $\widehat\psi_\infty^{(k)}(0)=0$ for every $k\in \mathbb N$, which gives a contradiction.
\end{proof}

We let $\kappa>1$ as in Proposition \ref{equivalentdefquasihyp} and define $\gamma=1/\kappa$.

\begin{lemma}
\label{BresciaSuni}
Given $\widehat\psi_p\in\widehat\Psi_p$, there exist a $\widehat\psi_{f^{-1}(p)}\in\widehat\Psi_{f^{-1}(p)}$ and a constant $\lambda_{p,-1}$ with $|\lambda_{p,-1}|\leq\gamma$, such that
\[
f^{-1}\circ\widehat\psi_p(\zeta)=\widehat\psi_{f^{-1}(p)}(\lambda_{p,-1} \cdot \zeta).
\]
\end{lemma}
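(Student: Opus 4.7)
The plan is to exploit that any $\widehat\psi_p \in \widehat\Psi_p$ is by construction a normal limit of a sequence $\psi_{p_n} \in \Psi_{p_n}$ with $p_n \in \mathcal S$ and $p_n \to p$, and then push the functional equation \eqref{functionaleq} through a subsequential limit in the pre-image. First, since $\mathcal S$ is $f$-invariant, each $q_n := f^{-1}(p_n)$ also lies in $\mathcal S$, and continuity of $f^{-1}$ gives $q_n \to f^{-1}(p)$. Choose any $\psi_{q_n} \in \Psi_{q_n}$; by \eqref{functionaleq} applied at $q_n$,
\[
f\circ\psi_{q_n}(\zeta) = \psi_{p_n}\!\left(\lambda_{q_n}\cdot\zeta\right),
\]
and since $f$ is a biholomorphism this rearranges, with $\mu_n := 1/\lambda_{q_n}$, to
\[
f^{-1}\circ\psi_{p_n}(\zeta) = \psi_{q_n}(\mu_n\cdot\zeta).
\]

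Next, I would apply Corollary \ref{boundedlambda} to the sequence $\{\lambda_{q_n}\}$, obtaining $\kappa \leq |\lambda_{q_n}| \leq K$ and hence $1/K \leq |\mu_n| \leq 1/\kappa = \gamma$. After passing to a subsequence we may assume $\mu_n \to \lambda_{p,-1}$ with $|\lambda_{p,-1}| \leq \gamma$. Simultaneously, by the normality of $\Psi_\mathcal{S}$ (definition of quasi-expansion), we may pass to a further subsequence so that $\psi_{q_n} \to \widehat\psi$ locally uniformly on $\mathbb{C}$ for some $\widehat\psi \in \widehat\Psi_\mathcal{S}$. Since $\widehat\psi(0) = \lim \psi_{q_n}(0) = \lim q_n = f^{-1}(p)$, we have $\widehat\psi \in \widehat\Psi_{f^{-1}(p)}$; call it $\widehat\psi_{f^{-1}(p)}$.

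Finally I would pass to the limit in the displayed identity above: the left-hand side converges locally uniformly to $f^{-1}\circ\widehat\psi_p(\zeta)$ by continuity of $f^{-1}$ and the locally uniform convergence $\psi_{p_n} \to \widehat\psi_p$; the right-hand side converges to $\widehat\psi_{f^{-1}(p)}(\lambda_{p,-1}\cdot\zeta)$ because $\psi_{q_n}(\mu_n \zeta) \to \widehat\psi_{f^{-1}(p)}(\lambda_{p,-1}\zeta)$ locally uniformly (the scaling factors are bounded, so the arguments remain in a compact set where the convergence is uniform). This yields the required identity together with the bound $|\lambda_{p,-1}| \leq \gamma$.

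The main obstacle I anticipate is purely bookkeeping: one must diagonalize the two subsequence extractions (for $\mu_n$ and for $\psi_{q_n}$) and verify that the limit $\widehat\psi_{f^{-1}(p)}$ is genuinely centered at $f^{-1}(p)$ rather than at some other point of $J^\star$. Both points are handled by the continuity of $f^{-1}$ applied to $q_n = f^{-1}(p_n)$ combined with the equality $\widehat\psi(0)=\lim q_n$, so no deeper input beyond Corollary \ref{boundedlambda} and Proposition \ref{normpropnormfam} is needed.
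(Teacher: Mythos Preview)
Your proof is correct and follows essentially the same route as the paper's: approximate $\widehat\psi_p$ by $\psi_{p_n}\in\Psi_{p_n}$, pick $\psi_{f^{-1}(p_n)}\in\Psi_{f^{-1}(p_n)}$, use the functional equation to obtain scaling constants with modulus at most $\gamma=1/\kappa$, and pass to a subsequential limit. The only cosmetic difference is that you cite Corollary~\ref{boundedlambda} (giving both bounds $\kappa\le|\lambda_{q_n}|\le K$) whereas the paper appeals directly to Theorem~\ref{equivalentdefquasihyp}(iv), since only the lower bound $|\lambda_{q_n}|\ge\kappa$ is actually needed to conclude $|\mu_n|\le\gamma$.
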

\begin{proof}
Given $\widehat\psi_p\in\widehat \Psi_p$, take a sequence $\psi_{p_n}\in\Psi_{p_n}$ such that $p_n\to p$ and $\psi_{p_n}\to\widehat\psi_p$, and a sequence $\psi_{f^{-1}(p_n)}\in\Psi_{f^{-1}(p_n)}$ converging to $\widehat\psi_{f^{-1}(p)}\in\widehat\Psi_{f^{-1}(p)}$.

By Proposition \ref{equivalentdefquasihyp} there exists a sequence of constants $\lambda(p_n,-1)$ satisfying $|\lambda(p_n,-1)|\leq\gamma$ for which
\[
f^{-1}\circ \psi_{p_n}(\zeta)=\psi_{f^{-1}(p_n)}(\lambda_{p_n,-1}\cdot \zeta).
\]
By converting to a subsequence if necessary, we may suppose that the sequence $\lambda_{p_n,-1}$ converges to a number $\lambda_{p,-1}$. It is clear that $|\lambda_{p,-1}|\leq \gamma$ and by the uniform convergence on compact subsets of $\psi_{p_n}$ and $\psi_{f^{-1}(p_n)}$ we obtain
\[
f^{-1}\circ\widehat\psi_p(\zeta)=\widehat\psi_{f^{-1}(p)}(\lambda_{p,-1} \cdot \zeta).
\]
\end{proof}

\begin{lemma}
\label{degbound}
Let $m$ as in Lemma \ref{taubound}. Then $\deg(\widehat\psi)\leq m+1$ for all $\widehat\psi\in\widehat\Psi_\mathcal S$.
\end{lemma}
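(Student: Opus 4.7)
The plan is to exhibit, for each $\widehat\psi_p \in \widehat\Psi_p$ of degree $d$, a normal-limit parameterization $\widehat\psi_\infty$ based at some $p_\infty \in J^\star$ whose order at $0$ is nearly $d$; then Lemma \ref{taubound} applied at $p_\infty$ converts the bound on $\tau$ into the desired bound on $d$.

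I first factor $\widehat\psi_p = \xi_p \circ h_p$, using the injective parametrization $\xi_p\colon\mathbb C\to W^u(p)$ from Proposition \ref{injectiveparameter}. Using the logarithmic bound $G^+(z)\le\log\|z\|+O(1)$ together with the finite $M(r)$ from Theorem \ref{equivalentdefquasihyp}, one checks that $h_p$ has polynomial growth, hence is a polynomial, and we set $\deg(\widehat\psi_p):=\deg(h_p)=d$.

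Suppose for contradiction that $d\ge m+2$. Iterating Lemma \ref{BresciaSuni}, the functional equation transports to the level of $h$ as
\[
h_{f^{-n}(p)}(\zeta)=\tilde\lambda_n\bigl(h_p(\zeta/\lambda_n)\bigr),
\]
for some affine $\tilde\lambda_n$ and with $|\lambda_n|\le\gamma^n$, where $\tilde\lambda_n$ is determined by the relation $f^{-n}\circ\xi_p=\xi_{f^{-n}(p)}\circ\tilde\lambda_n$. Thus $\deg h_{f^{-n}(p)}=d$, while the $d-1$ critical points of $h_{f^{-n}(p)}$ sit at $\lambda_n c_i$ (where the $c_i$ enumerate with multiplicity those of $h_p$) and all collapse to $\zeta=0$ as $n\to\infty$. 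Passing to a subsequence with $f^{-n_k}(p)\to p_\infty\in J^\star$ and $\widehat\psi_{f^{-n_k}(p)}\to\widehat\psi_\infty\in\widehat\Psi_{p_\infty}$ (compactness of $\widehat\Psi_\mathcal S$ by Proposition \ref{normpropnormfam}), write $\widehat\psi_\infty=\xi_{p_\infty}\circ h_\infty$ for some polynomial $h_\infty$ of degree $d_\infty\le d$. An application of Hurwitz's theorem to the derivatives of $h_{f^{-n_k}(p)}$ forces all $d_\infty-1$ critical points of $h_\infty$ to sit at $0$, so $h_\infty(\zeta)=h_\infty(0)+c\zeta^{d_\infty}$ and $ord(\widehat\psi_\infty)=d_\infty$. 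Lemma \ref{taubound} then yields $d_\infty\le m$.

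The main obstacle is to show $d_\infty\ge d-1$: at most one of the $d-1$ critical points of $h_{f^{-n_k}(p)}$ can escape to infinity in the limit, corresponding to a possible vanishing of the leading coefficient under the rescaling. Quantifying this via a comparison between $\lambda_n$ and the affine stretching $\tilde\lambda_n$ (both governed by the action of $f^{-n}$ on $W^u(p)$, which the $G^+$-normalization ties to the dynamics on $J^\star$), we obtain $d-1\le d_\infty\le m$, and hence $d\le m+1$, contradicting $d\ge m+2$.
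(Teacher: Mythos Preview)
Your argument has two genuine gaps, and the paper's own proof avoids both by taking a more direct route.

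First, the Hurwitz step is not justified. You apply Hurwitz to the derivatives of $h_{f^{-n_k}(p)}$, which requires $h_{f^{-n_k}(p)}\to h_\infty$ locally uniformly. But you obtain $h_{f^{-n}(p)}$ by factoring $\widehat\psi_{f^{-n}(p)}=\xi_{f^{-n}(p)}\circ h_{f^{-n}(p)}$ through the injective parametrizations $\xi_q$ of Proposition~\ref{injectiveparameter}. Those $\xi_q$ carry no normalization at this stage of the paper, so there is no reason they form a normal family or that the associated polynomials $h_{f^{-n}(p)}$ converge at all. The normality of the injective family $\Phi_{J^\star}$ (Theorem~\ref{bignormalfamily}) and the control on the polynomials (Lemma~\ref{normalfamilypoly}) are proved \emph{after} Lemma~\ref{degbound} and rely on it, so invoking them here would be circular.

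Second, the last paragraph is not a proof. You correctly identify that the degree may drop in the limit, reducing the problem to showing $d_\infty\ge d-1$, but the one-sentence ``comparison between $\lambda_n$ and $\tilde\lambda_n$'' is a hope, not an argument. Nothing you have set up controls the affine stretch $\tilde\lambda_n$ well enough to prevent the leading several coefficients of $h_{f^{-n}(p)}$ from vanishing in the limit.

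The paper's proof sidesteps both issues by never factoring through $\xi_q$. It fixes a regular value $z$ of $\widehat\psi_p$ with $m+2$ distinct preimages $\zeta_1,\dots,\zeta_{m+2}$, observes that after rescaling by $\lambda_{p,-n}$ all of them land in $\Delta_\varepsilon$, and applies the argument principle directly to the components of $\widehat\psi_{n_k}$ (which \emph{do} converge, by normality of $\widehat\Psi_\mathcal S$) to force $\widehat\psi_q(\zeta)=q$ to have a root of multiplicity at least $m+2$ at $0$. That immediately gives $\mathrm{ord}(\widehat\psi_q)>m$, contradicting Lemma~\ref{taubound}. No control of $\tilde\lambda_n$, no Hurwitz, and no degree-drop analysis are needed.
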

\begin{proof}
Suppose there exists $\widehat\psi_p\in\widehat\Psi_p$ for which $\deg(\widehat\psi_p)>m+1$. Let $z\in W^u(p)$ be a regular value of $\psi_p$ so that there exist $m+2$ distinct elements $\{\zeta_1,\dots,\zeta_{m+2}\}\subset\psi^{-1}(\{z\})$.

By Lemma \ref{BresciaSuni}, there exists a sequence of maps $\widehat\psi_n\in\widehat\Psi_{f^{-n}(p)}$ and a sequence of constants $\lambda_{p,-n}$, with $|\lambda_{p,-n}|\leq \gamma^{-n}$, for which
\[
f^{-n}\circ\widehat\psi_p(\zeta)=\widehat\psi_n(\lambda_{p,-n}\cdot \zeta).
\]
We take a subsequence $n_k$ such that $\widehat\psi_{n_k}\to\widehat\psi_q\in\widehat\Psi_q$ locally uniformly on $\mathbb C$. By Proposition \ref{injectiveparameter}, $W^u(p)\subset\mathbb W^u(p)$, therefore $q_k:=\widehat\psi_{n_k}\left(\lambda_{p,-n_k}\cdot \zeta_i\right)\to q$ as $k\to\infty$.

Given $\varepsilon>0$, there exists $k_0$ such that for every $k\geq k_0$ and every $i=1,\ldots , m+2$ we have that $\lambda_{p,-n_k}\cdot\zeta_i\in\Delta_{\varepsilon}$.

Writing $\pi_1$ and $\pi_2$ for the respective projections to the $z$- and $w$-axis, we obtain
\[
\lim_{k\to\infty}\frac{1}{2\pi i}\int_{\partial \Delta_\varepsilon}\frac{(\pi_i\circ \widehat\psi_{n_k})'(\zeta)}{\pi_i\circ\widehat\psi_{n_k}(\zeta)-\pi_i(q_{k})}\,d\zeta\geq m+2.
\]
By the identity principle we can choose $\varepsilon>0$ such that $0$ is the only solution of $\widehat\psi_q(\zeta)=q$ inside $\overline{\Delta}_\varepsilon$. In particular we may assume that $|\pi_i\circ\widehat\psi_q(\zeta)-\pi_i(q)|>\delta$ on $\partial \Delta_\varepsilon$ for $i = 1, 2$.

Using uniform convergence, we bring the limit inside the integral to obtain
\[
\frac{1}{2\pi i}\int_{\partial \Delta_\varepsilon}\frac{(\pi_i\circ \widehat\psi_{q})'(\zeta)}{\pi_i\circ\widehat\psi_{q}(\zeta)-\pi_i(q)}\,d\zeta\geq m+2.
\]
It follows that $0$ is a solution of $\widehat\psi_q(\zeta) = q$ with multiplicity at least $m+2$. This implies that $ord(\widehat\psi_q)>m$, which is not possible by Lemma \ref{BresciaSuni}.
\end{proof}

\begin{corollary}
\label{polynomialconnection}
Let $\widehat\psi_p\in\widehat\Psi_p$, and let $\xi_p:\mathbb C\rightarrow W^u(p)$ be an injective parameterization of $W^u(p)$. Then there exists a polynomial $h_p$ of degree at most $m+1$ such that
\[
\widehat\psi_p(\zeta)=\xi_p\circ h_p(\zeta).
\]
\end{corollary}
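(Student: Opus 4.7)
The plan is to construct $h_p$ as the literal composition $\xi_p^{-1} \circ \widehat\psi_p$ and then identify it as a polynomial using Lemma \ref{degbound} together with Picard's theorem. First I would note that, by Proposition \ref{injectiveparameter}, $W^u(p)$ is a nonsingular (one-dimensional) complex manifold and $\xi_p : \mathbb{C} \to W^u(p)$ is an injective immersion. Since $\mathbb{C}$ is simply connected and $\xi_p$ is an injective holomorphic immersion onto $W^u(p)$, it is a biholomorphism of $\mathbb{C}$ onto $W^u(p)$, so $\xi_p^{-1} : W^u(p) \to \mathbb{C}$ is a well-defined holomorphic map.

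By the definition of $W^u(p)$ following Proposition \ref{normpropnormfam}, we have $\widehat\psi_p(\mathbb{C}) \subset W^u(p)$, so the composition
\[
h_p := \xi_p^{-1} \circ \widehat\psi_p : \mathbb{C} \to \mathbb{C}
\]
is an entire function satisfying $\widehat\psi_p = \xi_p \circ h_p$ by construction. It remains to verify that $h_p$ is a polynomial of degree at most $m+1$.

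The key step is to compare the generic fiber cardinality of $h_p$ with that of $\widehat\psi_p$. Since $\xi_p$ is a bijection between $\mathbb{C}$ and $W^u(p)$, for every $z \in W^u(p)$ we have $\widehat\psi_p^{-1}(z) = h_p^{-1}(\xi_p^{-1}(z))$ with identical cardinality. By Lemma \ref{degbound}, the generic cardinality of $\widehat\psi_p^{-1}(z)$ is bounded by $m+1$, and hence so is the generic cardinality of the fibers of $h_p$. An entire function with uniformly bounded fiber cardinality on an open set must be a polynomial: otherwise $h_p$ would be transcendental, and by the Great Picard Theorem would assume every value in $\mathbb{C}$ infinitely often with at most one exception, contradicting the bound above. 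This forces $\deg(h_p) \leq m+1$.

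I do not expect a serious obstacle here: the corollary is essentially a bookkeeping consequence of Lemma \ref{degbound}. The only point requiring a little care is the justification that $\xi_p$ is genuinely invertible as a holomorphic map onto $W^u(p)$, which rests on the fact that an injective holomorphic immersion from $\mathbb{C}$ onto a nonsingular complex curve is a biholomorphism; once this is in hand, the rest of the argument is immediate from the degree bound and Picard's theorem.
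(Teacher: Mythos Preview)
Your proof is correct and follows exactly the same route as the paper: define $h_p=\xi_p^{-1}\circ\widehat\psi_p$, observe it is entire, and use the degree bound from Lemma~\ref{degbound} to force it to be a polynomial of degree at most $m+1$. The paper's proof is a single sentence that leaves implicit precisely the point you spell out with Picard's theorem, so your version is simply a more detailed rendering of the same argument.
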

\begin{proof}
The entire function $h_p=\xi^{-1}_p\circ\psi_p$ has degree at most $m+1$, and must therefore be a polynomial.
\end{proof}

Given $p\in J^\star$ we consider the family of all parameterizations $\phi_p(\zeta)=\xi_p(a\zeta+b)$ satisfying
$$
\phi_p(0) = p \; \; \mathrm{and} \; \; \max_{|\zeta| \le 1} G^+ \circ \phi_p(\zeta) = 1
$$
as in \eqref{normalizationprop}. We denote the family of such parametrizations by $\Phi_p$, and write $\Phi_{\mathcal S}=\bigcup_{p\in J^\star}\Phi_p$. The family $\Phi_\mathcal S$ could a priori depend on $\mathcal S$ and $\Psi_\mathcal S$, but it turns out that this is not the case.

\begin{theorem}
\label{bignormalfamily}
The family $\Phi_\mathcal S$ is normal and does not depend on the set $\mathcal S$ and the family $\Psi_\mathcal S$.
\end{theorem}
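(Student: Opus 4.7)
My strategy to prove normality of $\Phi_\mathcal S$ is to reduce, via Corollary~\ref{polynomialconnection}, to the compactness of $\widehat\Psi_\mathcal S$ given by Proposition~\ref{normpropnormfam}. By Montel's theorem and the fact that $J^-\cap\{G^+\le C\}$ is bounded in $\mathbb C^2$ for every $C$ (a consequence of the filtration together with the lower bound $G^+(z)\ge\log\|z\|-O(1)$ on $V^+$), it suffices to establish a uniform bound $\max_{|\zeta|\le R}G^+\circ\phi_p(\zeta)\le M'(R)<\infty$ over all $p\in J^\star$ and all $\phi_p\in\Phi_p$.

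Given $p\in J^\star$ and $\phi_p(\zeta)=\xi_p(a\zeta)\in\Phi_p$, I would pick any $\widehat\psi_p\in\widehat\Psi_p$, which is non-empty by density of $\mathcal S$ in $J^\star$ and compactness of $\widehat\Psi_\mathcal S$. Corollary~\ref{polynomialconnection} writes $\widehat\psi_p=\xi_p\circ h_p$ for a polynomial $h_p$ of degree at most $m+1$ with $h_p(0)=0$, so $\widehat\psi_p=\phi_p\circ k_p$ with $k_p(\zeta)=h_p(\zeta)/a$, again of degree $\le m+1$ and fixing $0$. The key step is to obtain uniform bounds on the coefficients of $k_p$. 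The lower bound $\max_{|\zeta|=1}|k_p(\zeta)|\ge 1$ is immediate: setting $v_p:=G^+\circ\xi_p$, the $\phi_p$-normalization forces $v_p<1$ on $\Delta_{|a|}$, while the $\widehat\psi_p$-normalization forces $v_p(ak_p(\zeta))$ to attain $1$ somewhere on $\partial\Delta_1$. For the upper bound I would combine the uniform estimate $\max_{|\zeta|\le 2}v_p(ak_p(\zeta))\le M(2)$, coming from compactness of $\widehat\Psi_\mathcal S$, with a uniform growth estimate for $v_p$ at infinity obtained from $G^+(z)\le\log\|z\|+O(1)$ and the functional equation $f\circ\xi_p=\xi_{f(p)}\circ(\lambda_p\cdot)$ together with the bounds $\kappa\le|\lambda_p|\le K$ of Corollary~\ref{boundedlambda}, which propagate the asymptotic behaviour of $\xi_p$ uniformly in $p$.

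Once $k_p$ is uniformly controlled, Cauchy estimates and the fundamental theorem of algebra produce $R'=R'(R)$ with $k_p(\Delta_{R'})\supset\Delta_R$ independently of $p$, so the bound $\max_{|\zeta|\le R'}G^+\circ\widehat\psi_p(\zeta)=\max_{|\zeta|\le R'}G^+\circ\phi_p(k_p(\zeta))\le M(R')$ transfers to $\max_{|\zeta|\le R}G^+\circ\phi_p(\zeta)\le M(R')$, finishing the normality argument.

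For the $\mathcal S$-independence, $\Phi_p$ depends only on the complex curve $W^u(p)\subset J^-$ and the restriction of $G^+$. Approximating $p$ by saddles $p_n\to p$ (dense by Theorem~\ref{closuresaddle}), for which $W^u(p_n)=\mathbb W^u(p_n)$ is canonical, I would use the uniqueness statement of Proposition~\ref{injectiveparameter} to characterize $W^u(p)$ as the image of any normal limit of parametrizations through $p$, which does not depend on the choice of $\mathcal S$. The main obstacle is the uniform upper bound on $k_p$: because the sublevel sets $\{v_p\le 1\}$ need not be disks, a purely pointwise subharmonic estimate is insufficient and one must exploit the auxiliary bound at $R>1$ together with dynamical control on the asymptotic behaviour of $\xi_p$.
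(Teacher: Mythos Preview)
Your overall architecture --- factor $\widehat\psi_p=\phi_p\circ k_p$ through a polynomial of bounded degree, control $k_p$ uniformly, then transfer the known bound $M(R')$ for $\widehat\Psi_\mathcal S$ to $\Phi_\mathcal S$ --- is exactly the skeleton the paper uses. The divergence, and the gap, is in how you propose to obtain the uniform upper bound on $k_p$.

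You want to deduce $|k_p(\zeta)|\le C$ on $\overline\Delta_2$ from $v_p(ak_p(\zeta))\le M(2)$ together with a ``uniform growth estimate for $v_p$ at infinity.'' But $v_p(a\cdot)=G^+\circ\phi_p$, so what you are asking for is that the sublevel set $\{G^+\circ\phi_p\le M(2)\}$ sit inside a fixed disk $\Delta_C$ independently of $p$. This is essentially the normality of $\Phi_\mathcal S$ that you are trying to prove. The tools you invoke do not break the circularity: the estimate $G^+(z)\le\log\|z\|+O(1)$ is an \emph{upper} bound on $G^+$, hence on $v_p$, and gives nothing about how large $|w|$ must be to force $v_p(aw)>M(2)$; and Corollary~\ref{boundedlambda} bounds the multipliers $\lambda_p$ for the \emph{already normal} family $\Psi_\mathcal S$, not for the unnormalized $\xi_p$ or for $\phi_p\in\Phi_\mathcal S$. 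Propagating growth of $\xi_p$ via the functional equation would require uniform control on the parametrizations $\xi_p$ themselves, which again is the content of the theorem. You flag this yourself in the last paragraph, but the sketch does not supply a way around it.

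The paper sidesteps the growth estimate entirely. In Lemma~\ref{normalfamilypoly} it shows that the family $\mathcal H$ of connecting polynomials is normal (with non-constant limits) by the \emph{strong} form of Montel's theorem: after rotating so that $G^+\circ\widehat\psi_n$ attains the value $1$ at $\zeta=1$, one argues that $h_n(\Delta_\varepsilon)$ omits a fixed disk $\Delta(1,\delta)$, because on $\Delta_\varepsilon$ one has $G^+\circ\widehat\psi_n\le\kappa<1$ uniformly while the value $1$ is attained at $\zeta=1$, and these are incompatible under a normal limit of $\widehat\psi_n$. Since a family of holomorphic maps omitting a fixed disk is normal, $\mathcal H$ has uniformly bounded coefficients. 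The proof of the theorem then proceeds by contradiction: if $G^+\circ\phi_n(\zeta_n)\to\infty$ with $\zeta_n\in\Delta_2$, pass to a non-constant limit $h_\infty$ of $h_n$, solve $h_\infty(\omega_\infty)=\zeta_\infty$, lift to $h_n(\omega_n)=\zeta_n$ with $\omega_n$ bounded, and obtain $G^+\circ\widehat\psi_n(\omega_n)\to\infty$, contradicting normality of $\widehat\Psi_\mathcal S$. No lower growth bound on $G^+\circ\phi_p$ is needed anywhere.

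Your independence sketch is close in spirit to the paper's: the paper also routes through the saddle family $\mathcal S_1$, using that for a saddle $q$ one has $W^u_\mathcal S(q)\subset\mathbb W^u(q)=W^u_{\mathcal S_1}(q)$ and both are copies of $\mathbb C$, hence equal; it then bootstraps by taking $\mathcal S_2=J^\star$ with $\Psi_{\mathcal S_2}=\Phi_\mathcal S$ (legitimate once normality is known) and uses Proposition~\ref{injectiveparameter} to match $W^u_\mathcal S(p)=W^u_{\mathcal S_2}(p)=W^u_{\mathcal S_1}(p)$ for all $p\in J^\star$.
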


Theorem \ref{bignormalfamily} follows from the \textit{Proper, Bounded Area Condition}, see Theorem 3.4 of \cite{BS8} and Proposition 1.3 of \cite{BSh}. We will give a different proof, using Lemma \ref{normalfamilypoly} below, which will later be used again.

Recall from above that given a point $p\in J^\star$ and a pair of functions $\widehat\psi_p\in\widehat\Psi_p$ and $\phi_p\in\Phi_p$, there exists a polynomial $h_p$ of degree at most $m+1$ for which $\widehat\psi_p=\phi_p\circ h_p$. Let $\mathcal H_p\subset Pol(m+1)$ be the collection of these polynomials $h_p$, and define
\[
\mathcal H=\bigcup_{p\in J^\star} \mathcal H_p\subset Pol(m+1).
\]
Since $\widehat\psi_p(0)=p$, $\phi_p(0)=p$ and since $\phi_p$ is injective it follows that $h_p(0)=0$ for every $h_p\in\mathcal H$.
\begin{lemma}
\label{normalfamilypoly}
There exists a constant $M>0$ such that, given $h\in\mathcal H$, the absolute value of every coefficient of $h$ is bounded by $M$. Moreover the family $\mathcal H$ is normal and every normal limit is a non-constant polynomial of degree at most $m+1$.
\end{lemma}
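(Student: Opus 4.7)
The plan is to prove the coefficient bound by contradiction and then derive both the normality and the non-constancy of limits as corollaries. The engine will be the compactness of $\widehat\Psi_{\mathcal S}$ from Proposition \ref{normpropnormfam} together with a Liouville-type argument for subharmonic functions. I would start with a uniform-in-$p$ bound on $\phi_p(\overline{\Delta}_1)$ in $\mathbb C^2$: Proposition \ref{injectiveparameter} gives $W^u(p)\subset J^-\subset K^-$, so $\phi_p(\mathbb C)$ avoids $V^-$. Combining this with the filtration estimate $G^+(z)\ge \log\Vert z\Vert-C_0$ on $V^+$ and the normalization $G^+\circ\phi_p\le 1$ on $\overline{\Delta}_1$, one obtains $\Vert \phi_p(\zeta)\Vert \le B$ for $|\zeta|\le 1$, uniformly in $p\in J^\star$.

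For the main step, assume for contradiction that $h_n\in\mathcal H_{p_n}$ satisfies $L_n:=\max_{|\zeta|\le 1}|h_n(\zeta)|\to\infty$. Rescaling by $g_n:=h_n/L_n$ and passing to a subsequence, I get a polynomial limit $g_\infty$ of degree at most $m+1$ with $g_\infty(0)=0$ and $\max_{|\zeta|\le 1}|g_\infty|=1$; in particular $g_\infty\not\equiv 0$. By the open-mapping theorem near $0$, for large $n$ the image $g_n(\overline{\Delta}_{1/2})$ contains a fixed disk around the origin, so $h_n(\overline{\Delta}_{1/2})=L_n\cdot g_n(\overline{\Delta}_{1/2})$ contains a disk around $0$ of radius tending to $\infty$. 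From $\widehat\psi_{p_n}=\phi_{p_n}\circ h_n$ and the normalization of $\widehat\psi_{p_n}$, one reads off $G^+\circ\phi_{p_n}\le 1$ on this growing disk, and the first-paragraph bound then shows that $\phi_{p_n}$ is uniformly bounded on every compact of $\mathbb C$. A Montel extraction yields a limit $\phi_\infty:\mathbb C\to\mathbb C^2$; the subharmonic function $G^+\circ\phi_\infty$ is bounded on $\mathbb C$, hence constant by Liouville, and equal to $G^+(p_\infty)=0$ since $p_\infty\in J^\star\subset K^+$. Combined with $\phi_\infty(\mathbb C)\subset K^-$ (closed), this forces $\phi_\infty(\mathbb C)\subset K$, a bounded set, so $\phi_\infty$ is constant by the vector-valued Liouville theorem. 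But then $\max_{|\zeta|\le 1}G^+\circ\phi_{p_n}\to 0$, contradicting the normalization $=1$.

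Normality of $\mathcal H$ and the degree-at-most-$m+1$ bound for normal limits follow at once from the uniform coefficient bound. To rule out the constant limit $h\equiv 0$, a Cauchy estimate applied on $\overline{\Delta}_1$ together with the first-paragraph bound $\Vert\phi_{p_n}\Vert_{\overline{\Delta}_1}\le B$ gives a uniform Lipschitz estimate $\Vert \phi_{p_n}(w)-p_n\Vert\le 2B|w|$ for $|w|\le 1/2$; so $h_n\to 0$ uniformly on $\overline{\Delta}_1$ would force $\widehat\psi_{p_n}\to p_\infty$ uniformly, again producing $\max G^+\circ\widehat\psi_{p_n}\to 0$ and contradicting the normalization. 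The main obstacle I anticipate is the Montel step in the middle paragraph: one must carefully upgrade the $L^\infty$ control on the composites $G^+\circ\phi_{p_n}$ into a $\mathbb C^2$-valued normal family for $\phi_{p_n}$ itself, which is where the $V^+$-side filtration estimate on $G^+$ combined with $\phi_{p_n}(\mathbb C)\subset K^-$ becomes essential.
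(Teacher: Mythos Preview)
Your proof is correct, but the route is genuinely different from the paper's.

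For the coefficient bound, the paper stays entirely inside the $\widehat\psi$-world: it locates the point on $\partial\Delta_1$ where $G^+\circ\widehat\psi_n$ attains $1$, invokes the estimate from \cite{BS8} that $G^+\circ\widehat\psi_n\leq\kappa<1$ on $\Delta_\varepsilon$, and deduces that $h_n(\Delta_\varepsilon)$ avoids a fixed disk. The strong Montel theorem then forces normality of $\{h_n|_{\Delta_\varepsilon}\}$, contradicting coefficient blowup since $h_n(0)=0$. Your argument instead rescales $h_n$ and exploits the open mapping theorem to show that $h_n(\overline\Delta_{1/2})$ engulfs disks of unbounded radius; combined with the filtration lower bound $G^+\geq\log\|z\|-C_0$ on $V^+$ and the fact that $\phi_{p_n}(\mathbb C)\subset J^-$ avoids $V^-$, this forces the injective parametrizations $\phi_{p_n}$ themselves to be globally bounded, hence constant in the limit by Liouville, violating their normalization. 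Your approach is more self-contained in that it avoids the specific $\kappa$-bound from \cite{BS8}, but it imports the filtration estimate on $G^+$ (standard, though not stated in the paper) and passes through the auxiliary injective family $\phi_{p_n}$ rather than working directly with the normal family $\widehat\Psi$.

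For the non-constancy of limits, the paper's argument is slightly slicker: if $h_n\to 0$ then $h_n(\overline\Delta_1)\subset\Delta_{m\varepsilon}$ for large $n$, and the normalization of $\phi_n$ together with subharmonicity of $G^+\circ\phi_n$ immediately gives $\max_{|\zeta|\leq 1}G^+\circ\widehat\psi_n<1$. Your Cauchy--Schwarz estimate $\|\phi_{p_n}(w)-p_n\|\leq 2B|w|$ reaches the same conclusion by a slightly longer path. Both are fine.

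One small point: in your middle paragraph, the subharmonic Liouville step for $G^+\circ\phi_\infty$ is redundant, since you have already shown $\|\phi_\infty\|\leq B$ globally, and the vector-valued Liouville theorem alone finishes it.
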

\begin{proof}
Suppose that there exists a sequence $h_n\in\mathcal H$, with
$$
h_n(\zeta)=a_{m+1}(n)\zeta^{m+1}+\dots + a_1(n)\zeta,
$$
and for which
\begin{equation}
\label{greve}
\max\{|a_1(n)|,\dots,|a_{m+1}(n)|\}\to\infty.
\end{equation}
For every $n$ there exists $p_n\in J^\star$, $\widehat\psi_n\in\widehat \Psi_{p_n}$ and $\phi_n\in\Phi_{p_n}$ such that $\widehat\psi_n=\phi_n\circ h_n$.

By Proposition \ref{normpropnormfam} we know that
\[
\max_{|\zeta|\leq 1}G^+\circ\widehat\psi_n=1,
\]
thus, by the subharmonicity of $G^-\circ\widehat\psi_n$, there exist $\zeta_n$ with $|\zeta_n|=1$ for which $G^+\circ \widehat\psi_n(\zeta_n)=1$.

Note that $\tilde\psi_n(\zeta)=\widehat\psi_n(e^{i\theta}\zeta)$ is also an element of $\widehat\Psi_{p_n}$. Moreover we have that $\tilde\psi_n=\phi_n\circ \tilde h_n$, where
\[
\tilde h_n(\zeta)=a_{m+1}(n)e^{i(m+1)\theta}\zeta^{m+1}+\dots+a_1(n)e^{i\theta}\zeta.
\]
The coefficients $\tilde a_i(n)$ satisfy \eqref{greve}. By replacing $h_n$ with $\tilde h_n$ if necessary, we may suppose that each $\zeta_n$ equals $1$.

Let $0<\varepsilon<1$. Point $(3)$ of Theorem 1.2 of \cite{BS8}, implies the existence of a $\kappa<1$ such that $\max_{|\zeta|\leq \varepsilon} G^+\circ\widehat\psi_n\leq \kappa<1$ for all $n \in \mathbb N$.

We claim there exists a $\delta>0$ such that $h_n(\Delta_\varepsilon)\cap \Delta(1,\delta)=\emptyset$ for every $n\in\mathbb N$. Suppose that this is not the case. Then by taking a subsequence if necessary, we may assume that there exists $\omega_n\in h_n(\Delta_\varepsilon)$ such that $\omega_n\to 1$. By normality of $\{\widehat\psi_n\}$ we can take a subsequence if necessary so that $\widehat\psi_n\to\widehat\psi_\infty$ locally uniformly on $\mathbb C$. As a consequence, $g_n=G^+\circ\widehat\psi_n$ converges locally uniformly to $g_\infty$. But by uniform convergence, given $\omega_n\to 1$, we cannot have at the same time $g_n(\omega_n)\leq \kappa<1$ and $g_n(1)=1$, which completes the proof of our claim.

By the strong version of Montel theorem the family $\{h_n|_{\Delta_\varepsilon}\}$ is normal, contradicting \eqref{greve}, hence the coefficients of the polynomials in $\mathcal H$ are uniformly bounded. Normality of $\mathcal H$ follows immediately.

Suppose for the purpose of a contradiction that there exist a sequence $h_n\in \mathcal H$ for which the maps $h_n$ converge to a constant $c$. Since $h_n(0)=0$ it follows that $c=0$. If we write $h_n(\zeta)=a_{m+1}(n)\zeta^{m+1}+\dots +a_1(n)\zeta$ we have that $a_i(n)\to 0$ for every $i=1,\dots,m+1$ , therefore for every $\varepsilon>0$ there exists $n_0$ such that when $n\geq n_0$ we have that $|a_i(n)|\leq\varepsilon$ for every $i$. Choose $\varepsilon$ small enough in order to have $m\varepsilon<1$ and $n\geq n_0$. If $\widehat\psi_n=\phi_n\circ h_n$, then
\begin{align*}
\max_{|\zeta|\leq 1} G^+\circ\widehat\psi_n(\zeta)&=\max_{|\zeta|\leq 1} G^+\circ\phi_n\circ h_n(\zeta)\\
&=\max_{\zeta\in h_n(\Delta)}G^+\circ \phi_n(\zeta)\\
&\leq \max_{|\zeta|\leq m\varepsilon}G^+\circ\phi_n(\zeta)\\
&<1,
\end{align*}
where the last inequality follows from subharmonicity of $G^+\circ\widehat\psi_n$. This gives a contradiction with equation \eqref{normalizationprop}, completing the proof.
\end{proof}

\begin{proof}[Proof of Theorem \ref{bignormalfamily}]
Suppose on the contrary that $\Phi_\mathcal S$ is not a normal family. By point $(iii)$ of Theorem \ref{equivalentdefquasihyp} we can choose $\phi_n\in\Phi_\mathcal S$ and $\zeta_n\in\Delta_2$ such that
\[
G^+\circ\phi_n(\zeta_n)\to\infty.
\]
For every $n$ take $\widehat\psi_n\in\widehat\Psi_{\phi_n(0)}$ and let $h_n\in\mathcal H$ such that $\widehat\psi_n=\phi_n\circ h_n$.

By taking a subsequence if necessary, by the previous lemma we may assume that $h_n$ converge to the non constant polynomial $h_\infty$ locally uniformly on $\mathbb C$. Furthermore we assume that $\zeta_n\to\zeta_\infty\in\overline{\Delta_2}$.

Let $\omega_\infty$ be a solution of $h_\infty(\zeta) = \zeta_\infty$. It follows that $h_n(\omega_\infty) - \zeta_\infty \rightarrow 0$. Since $\zeta_n \rightarrow \zeta_\infty$, uniform convergence of the polynomials $h_n$ in a neighborhood of $\omega_\infty$ implies the existence of a sequence $\omega_n \rightarrow \omega_\infty$ with $h_n(\omega_n) = \zeta_n$.



Therefore $\omega_n\in\Delta_{r_\infty+1}$ for $n$ large enough, hence Theorem \ref{equivalentdefquasihyp} implies that $G^+\circ\widehat\psi_n(\omega_n)<M(r_\infty+1)$, where $M(r)$ is defined as in \eqref{capitalM} for the family $\Psi_{\mathcal S}$. Since $G^+\circ\widehat\psi_n(\omega_n)=G^+\circ\phi_n(\zeta_n)$ we have obtained a contradiction.

Let $\Psi_{\mathcal S}$ be a normal family of parametrizations as in the definition of quasi-expansion. Let $\Psi_{\mathcal S_1}$ be the family of parameterizations relative to the saddle point described in the previous section. By Proposition \ref{quasiexpsaddlepoint}, this family is normal.  

Given any point $p\in J^\star$ we consider $W^u_\mathcal S(p)$ and $W^u_{\mathcal S_1}(p)$ defined respectively for the families $\Psi_{\mathcal S}$ and $\Psi_{\mathcal S_1}$. Is it clear from the definition of $\Phi_\mathcal S$ that if we can prove the equality $W^u_{\mathcal S}(p)=W^u_{\mathcal S_1}(p)$ for every $p\in J^\star$, this would imply that $\Phi_{\mathcal S}=\Phi_{\mathcal S_1}$, which finally would complete the proof of the theorem.

Let $q\in\mathcal S_1$ be a saddle point. By Proposition \ref{injectiveparameter} we know that $W^u_\mathcal S(q)\subset \mathbb W^u(q)=W_{\mathcal S_1}(q)$, where the last equality follows from the stable manifold theorem. Since $W^u_\mathcal S(p)$ and $W_{\mathcal S_1}^u(q)$ are both biholomorphic to $\mathbb C$, it follows that $W^u_{\mathcal S}(q)=W^u_{\mathcal S_1}(q)$.\\
Any $\psi\in\Psi_{q,\mathcal S_1}$ is parameterization of $W^u_\mathcal S(q)$ satisfying \eqref{normalizationprop}. It follows that $\psi\in\Phi_{\mathcal S}$.

As a consequence we obtain the two inclusion $\Psi_{\mathcal S}\subset \Phi_{\mathcal S}$ and $\Psi_{\mathcal S_1}\subset \Phi_{\mathcal S}$.\\
Since $\Phi_{\mathcal S}$ is a normal family, if we take $\mathcal S_2=J^\star$ and $\Psi_{\mathcal S_2}=\Phi_{\mathcal S}$, they satisfy the condition required in the definition of quasi-expansion. For every $p\in J^\star$ we have a third stable manifold $W^u_{\mathcal S_2}(p)$.\\
Since $\widehat{\Psi}_{p,\mathcal S}\subset\widehat\Phi_{p,\mathcal S}=\widehat\Psi_{p,\mathcal S_2}$, by Proposition \ref{injectiveparameter} it follows that $W^u_{\mathcal S}(p)=W^u_{\mathcal S_2}(p)$. In the same way we get that $W^u_{\mathcal S_1}(p)=W^u_{\mathcal S_2}(p)$ which proves the theorem.

\end{proof}
Since the family $\Phi_\mathcal S$ is normal and contains parameterizations through every point in $J^\star$, the pair $(J^\star,\Phi_\mathcal S)$ can be used in the definition of quasi-expansion. In fact, $\Phi_\mathcal S$ is the maximal family of parameterizations satisfying the required properties.\\
Since $\Phi_\mathcal S$ is independent of $\mathcal S$ we will from now on denote it by $\Psi$ or $\Psi_{J^\star}$. We will denote with $\Psi_p$, $\widehat \Psi$ and $\widehat \Psi_p$ the sets defined in section \ref{section:quasi} corresponding to the family $\Psi$. We call $\Psi$ the \emph{canonical family of parameterizations}.

\section{Stably-unbridged parametrizations}\label{section:unbridged}

Let $f$ be quasi-expanding and recall the canonical family of parameterizations $\Psi$ introduced in the previous section. If $f$ and $f^{-1}$ are both quasi-expanding (in which case $f$ is quasi-hyperbolic) we write $\widehat\Psi^u$ and $\widehat\Psi^s$ in order to distinguish the two families.

\begin{definition}
Suppose $f$ is quasi-contracting. Let $\widehat\psi\in\widehat\Psi^s$, we say that $\widehat\psi$ is \textit{stably-unbridged} if, for every $R>0$, every connected component of $\{G^-\circ\widehat\psi<R\}$ is bounded. We say that $\widehat\psi$ is \textit{stably-bridged} if it is not stably-unbridged.
\end{definition}

\begin{proposition}
Let $p\in J^\star$. Given $\widehat\psi_1,\widehat\psi_2\in\widehat \Psi_p^s$, then $\widehat\psi_1$ is stably-bridged if and only if $\widehat\psi_2$ is stably-bridged.
\end{proposition}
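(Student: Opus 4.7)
The plan is to reduce the question to a property of a single subharmonic function on $\mathbb{C}$, namely $G^-$ pulled back through an injective parametrization of $W^s(p)$, which will obviously be independent of the choice of $\widehat\psi_i$. Using the stable counterparts of Proposition \ref{injectiveparameter} and Corollary \ref{polynomialconnection}, I fix an injective parametrization $\xi_p : \mathbb{C} \to W^s(p)$ and write
\[
\widehat\psi_i = \xi_p \circ h_i, \qquad i = 1, 2,
\]
where each $h_i$ is a non-constant polynomial of degree at most $m+1$ with $h_i(0)=0$. Setting $g := G^- \circ \xi_p$, a subharmonic function on $\mathbb{C}$, one has
\[
\{G^- \circ \widehat\psi_i < R\} \;=\; h_i^{-1}\!\bigl(\{g < R\}\bigr).
\]

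By symmetry it suffices to show that if $\widehat\psi_1$ is stably-bridged, then so is $\widehat\psi_2$. Suppose $C_1$ is an unbounded component of $h_1^{-1}(\{g<R\})$. Since $h_1$ is a non-constant polynomial it is proper; consequently $h_1(C_1)$ is connected and cannot be relatively compact (otherwise $C_1$ would be contained in the preimage of a compact set, hence bounded). Thus $h_1(C_1)$ is unbounded, and lies in some unbounded connected component $D$ of $\{g < R\}$. The proof then reduces to the following geometric claim: \emph{for any non-constant polynomial $h$ and any unbounded connected open set $D \subset \mathbb{C}$, the preimage $h^{-1}(D)$ contains an unbounded connected component.} Applying this with $h = h_2$ produces an unbounded component of $h_2^{-1}(D) \subset \{G^- \circ \widehat\psi_2 < R\}$, which shows that $\widehat\psi_2$ is stably-bridged with the same $R$.

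The key remaining step is the geometric claim, which I plan to verify by a path-lifting argument. Since $D$ is open, connected and unbounded, one may choose a continuous injective path $\gamma : [0,1) \to D$ with $|\gamma(t)| \to \infty$ as $t \to 1$, and by a small perturbation within the open set $D$ we may assume $\gamma$ avoids the finitely many critical values of $h_2$. Standard path lifting along the unbranched covering $h_2 : \mathbb{C} \setminus h_2^{-1}(\text{crit val}) \to \mathbb{C} \setminus (\text{crit val})$ yields a continuous lift $\widetilde\gamma : [0,1) \to h_2^{-1}(D)$, and properness of $h_2$ forces $|\widetilde\gamma(t)| \to \infty$. Thus $\widetilde\gamma$ traces out an unbounded connected subset of $h_2^{-1}(D)$, which therefore lies in an unbounded connected component.

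The main obstacle is precisely this path-lifting step: one must arrange simultaneously that the path stays inside $D$, avoids the critical values of $h_2$, and escapes to infinity. All three requirements are mild, because $D$ is open, the critical value set is finite, and the unboundedness of $D$ is the hypothesis, so a generic small perturbation of any path to infinity works. Every other step is a direct consequence of the factorization $\widehat\psi_i = \xi_p \circ h_i$ coming from the quasi-contracting structure.
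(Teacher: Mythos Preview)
Your argument is correct and follows exactly the same strategy as the paper: factor both $\widehat\psi_1$ and $\widehat\psi_2$ through a single injective parametrization of $W^s(p)$ (what the paper calls $\psi_p\in\Psi_p^s$, what you call $\xi_p$) via polynomials $h_1,h_2$, and observe that ``stably-bridged'' is really a property of $G^-\circ\xi_p$. The paper simply asserts that $\widehat\psi_i$ is stably-bridged if and only if $\psi_p$ is, without justification; you supply that justification through a path-lifting argument for the claim that polynomial preimages of unbounded connected open sets contain unbounded components. That step is sound --- note that you do not actually need $|\gamma(t)|\to\infty$, only that the image of $\gamma$ is unbounded (which is immediate from path-connectedness of $D$), since properness of $h_2$ then forces the lift to have unbounded image as well. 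So your proof is a fleshed-out version of the paper's one-line argument, not a different route.
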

\begin{proof}
Given $\psi_p \in \Psi^s_p$ we can write $\widehat\psi_{1} = \psi_p \circ h_{1}$ and $\widehat{\psi}_2 = \psi_p \circ h_2$ for polynomials $h_1$ and $h_2$. It follows that $\widehat\psi_1$ is stably-bridged if and only $\psi_p$ is stably-bridged, and thus if and only if $\widehat\psi_2$ is stably-bridged.
\end{proof}
\begin{definition}
We say that a point $p\in J^\star$ is \textit{stably-unbridged} (respectively \textit{stably-bridged}) if every $\widehat\psi_p\in\widehat\Psi^s_p$ is stably-unbridged (respectively stably-bridged). We say that a quasi-contracting H\'enon map is \textit{stably-unbridged} if every point $p\in J^\star$ is stably-unbridged.
\end{definition}

Recall that for $R>0$ sufficiently large it follows from the filtration properties that $J^\star \subset \Delta^2_R$. Since $G^-$ is bounded on $\Delta^2_R$, stable-unbridgedness implies that for each $p \in J^\star$ the projection onto the second coordinate on the local stable manifold $W^s_R(p)$, the connected component of $W^s(p) \cap \Delta^2_R$ containing $p$, is a branched cover of finite degree. We refer to this as the degree of $W^s_R(p)$. In fact, these degrees are uniformly bounded:

\begin{proposition}\label{prop:degreebound}
Let $f$ be stably-unbridged and $R>0$ as above. Then the degrees of the stable manifolds $W_R^s(p)$ are uniformly bounded.
\end{proposition}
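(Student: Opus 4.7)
The plan combines stable-unbridgedness with compactness of the normal family $\widehat\Psi^s$: first to obtain uniform spatial control over the relevant parameter regions, and then to transfer a finite-degree bound via Hurwitz's theorem.

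\textbf{Setup.} By continuity of $G^-$ and compactness of $\overline{\Delta^2_R}$ I fix $R_0>0$ with $G^-<R_0$ on $\overline{\Delta^2_R}$. For each $p\in J^\star$ pick $\phi_p\in\Phi^s_p$ from the canonical family, let $V_p$ be the connected component of $\{G^-\circ\phi_p<R_0\}$ containing $0$ and $U_p$ the connected component of $\phi_p^{-1}(\Delta^2_R)$ containing $0$. Then $U_p\subset V_p$ (since $\Delta^2_R\subset\{G^-<R_0\}$) and $\phi_p\colon U_p\to W^s_R(p)$ is a biholomorphism by injectivity of $\phi_p$.

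\textbf{Uniform size bound.} I would first prove the existence of $r>0$ with $V_p\subset\Delta_r$ for every $p\in J^\star$. Assume not and take sequences $p_n\in J^\star$, $\zeta_n\in V_{p_n}$ with $|\zeta_n|\to\infty$. Using compactness of $J^\star$ and normality of $\Phi^s$, pass to a subsequence with $p_n\to q\in J^\star$ and $\phi_{p_n}\to\widehat\psi\in\widehat\Psi^s_q$ locally uniformly. Stable-unbridgedness of $q$ gives that the component $V_\infty$ of $\{G^-\circ\widehat\psi<R_0+1\}$ containing $0$ is bounded, with $G^-\circ\widehat\psi\equiv R_0+1>R_0$ on the compact set $\partial V_\infty$. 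Locally uniform convergence then yields $G^-\circ\phi_{p_n}>R_0$ on a thin annular neighborhood $A$ of $\partial V_\infty$ for large $n$. Since $V_{p_n}$ is connected, contains $0$, and is contained in $\{G^-\circ\phi_{p_n}<R_0\}$, it cannot meet $A$ and is therefore trapped in the bounded component of $\mathbb C\setminus A$ containing $0$, which lies inside $V_\infty$. This contradicts $|\zeta_n|\to\infty$.

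\textbf{Degree bound via Hurwitz.} Suppose toward contradiction that the degrees $D_p$ are unbounded and pick $p_n\in J^\star$ with $D_{p_n}\to\infty$. Passing to a normal limit as above, $\pi_2\circ\phi_{p_n}\to\pi_2\circ\widehat\psi$ locally uniformly on $\mathbb C$. By Sard, choose $z_0\in\Delta_R$ that is a regular value of $\pi_2\circ\widehat\psi$ on $\overline{\Delta_{r+1}}$ and has no preimage on $\partial\Delta_{r+1}$; let $N$ be the resulting finite number of preimages of $z_0$ under $\pi_2\circ\widehat\psi$ in $\Delta_{r+1}$ counted with multiplicity. Hurwitz's theorem then implies that for all large $n$ the number of preimages of $z_0$ under $\pi_2\circ\phi_{p_n}$ in $\Delta_{r+1}$ counted with multiplicity is exactly $N$. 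Since $U_{p_n}\subset\Delta_r\subset\Delta_{r+1}$ and $D_{p_n}$ equals the number of preimages of $z_0$ in $U_{p_n}$ counted with multiplicity (by the branched-cover structure of $\pi_2\colon W^s_R(p_n)\to\Delta_R$), we obtain $D_{p_n}\le N$ for large $n$ — a contradiction.

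\textbf{Main obstacle.} The delicate heart of the proof is the uniform size bound. One must ensure that no component $V_{p_n}$ can slip past the boundary $\partial V_\infty$ of the limiting component; this uses stable-unbridgedness of the limit point $q$ together with the topological separation supplied by the annular neighborhood where $G^-\circ\phi_{p_n}>R_0$.
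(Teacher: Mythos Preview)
Your proof is correct and follows the same overall strategy as the paper: assume the degrees are unbounded, extract a convergent subsequence $p_n\to p\in J^\star$, and derive a contradiction from stable-unbridgedness at the limit point. The paper's own argument is a three-line sketch (citing Lemma~5.1 of \cite{LP2}) which simply asserts that for $R_1>R$ the degree of $W^s_{R_1}(p)$ would then be infinite; your uniform size bound on the parameter regions together with the Hurwitz step is precisely the mechanism that makes that sketch rigorous.
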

\begin{proof}
The proof is identical to that of Lemma 5.1 in \cite{LP2}:

Suppose that there is no bound on the degrees. Then there exist a sequence $(p_n)$ for which the degrees of $W_R^s(p_n)$ converge to infinity. By passing to a subsequence we may assume that $p_n \rightarrow p \in J^\star$. For $R_1>R$ it then follows that the degree of $W_{R_1}^s(p)$ is infinite, giving a contradiction.
\end{proof}

The aim of this section is to prove the following theorem:
\begin{theorem}
\label{boundedlevset}
Let $f$ be quasi-hyperbolic and substantially dissipative. Then $f$ is stably-unbridged.
\end{theorem}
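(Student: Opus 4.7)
The proof will be by contradiction. Suppose $f$ is not stably-unbridged: then there exist $p\in J^\star$, $\widehat\psi_p\in\widehat\Psi^s_p$, and $R_0>0$ such that some connected component $U$ of $\{G^-\circ\widehat\psi_p<R_0\}$ is unbounded. By Corollary~\ref{polynomialconnection} we write $\widehat\psi_p=\phi_p\circ h_p$ with $\phi_p\in\Psi^s_p$ injective and $h_p$ a polynomial of bounded degree, and since polynomials are proper, the sublevel set $\{G^-\circ\phi_p<R_0\}$ still has an unbounded component, which we again denote by $U$. So we may work with the injective parameterization $\phi_p$.

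The plan is to iterate forward through the canonical family and exploit substantial dissipativity in combination with a normal-limit argument. Let $\phi_n\in\Psi^s_{f^n(p)}$ be the canonical iterates, which satisfy $f\circ\phi_n=\phi_{n+1}(\mu_n\,\cdot\,)$ with $K^{-1}\le|\mu_n|\le\kappa^{-1}$ by the analogue of Corollary~\ref{boundedlambda} applied to the quasi-expanding $f^{-1}$. Setting $M_n=\mu_0\mu_1\cdots\mu_{n-1}$ the functional equation gives
\[
G^-\circ\phi_n(\omega)=d^{-n}\,G^-\circ\phi_p(\omega/M_n),
\]
so $\{G^-\circ\phi_n<R_0\}\supset M_nU$ is unbounded, and each $\phi_n$ retains the bridge. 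The constancy of $\mathrm{Jac}(f)$ for H\'enon maps together with the decomposition of $\det Df^n$ as a product of the canonical stable and unstable scalings along the orbit yields $|M_n|\cdot|L_n|=|\mathrm{Jac}(f)|^n$, where $L_n=\prod_{k<n}\lambda_{f^k(p)}$; the quasi-expansion bound $|L_n|\ge\kappa^n$ together with substantial dissipativity $|\mathrm{Jac}(f)|<d^{-2}$ then produces the super-exponential decay $|M_n|\le(|\mathrm{Jac}(f)|/\kappa)^n<(d^2\kappa)^{-n}$. By compactness of $\widehat\Psi^s$ (Theorem~\ref{bignormalfamily}) we extract a convergent subsequence $\phi_{n_k}\to\widehat\phi_\infty\in\widehat\Psi^s_q$, whose normalization $\max_{|\zeta|\le 1}G^-\circ\widehat\phi_\infty=1$ persists by Proposition~\ref{normpropnormfam}.

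In the simplest case where $U$ is actually a neighborhood of infinity in $\mathbb{C}$, the rapid decay $|M_n|\to 0$ implies that $M_nU$ exhausts $\mathbb{C}\setminus\{0\}$, so $G^-\circ\widehat\phi_\infty\le R_0$ everywhere on $\mathbb{C}$; the subharmonic Liouville theorem together with $G^-\circ\widehat\phi_\infty(0)=G^-(q)=0$ then forces $G^-\circ\widehat\phi_\infty\equiv 0$, contradicting the normalization. The main obstacle is the general case in which $U$ is unbounded but not a neighborhood of infinity (for example a thin unbounded strip), so that $M_nU$ does not engulf $\mathbb{C}$ in the limit. To handle this, I expect one must supplement the scaling argument with an area/sheet-counting estimate: the algebraic degree $d^n$ of $f^n$ bounds the number of sheets of $\phi_n(\mathbb{C})\cap\Delta_R^2$ (for a fixed polydisk $\Delta_R^2\supset J^\star$) over a generic coordinate direction by $d^n$ times a uniform constant, while the Euclidean area of $\phi_n(M_nU)\subset\mathbb{C}^2$ is $|M_n|^2\cdot\mathrm{Area}(\phi_p(U))\lesssim d^{-4n}$; substantial dissipativity $|\mathrm{Jac}(f)|<d^{-2}$ provides the critical mismatch between these growth rates, forcing the bridges to fill the parameter plane in the normal limit and reducing the general case to the neighborhood-of-infinity case analyzed above.
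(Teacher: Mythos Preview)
Your argument has a genuine gap at the determinant identity $|M_n|\cdot|L_n|=|\mathrm{Jac}(f)|^n$. The scalings $\mu_n,\lambda_n$ are defined via the \emph{normalized} parameterizations $\phi_n\in\Psi^s_{f^n(p)}$ and $\psi^u_n\in\Psi^u_{f^n(p)}$, not via eigenvectors of $Df$. Writing $A_q=[\phi_q'(0)\,|\,(\psi^u_q)'(0)]$, the relation $Df_q\cdot A_q=A_{f(q)}\cdot\mathrm{diag}(\mu,\lambda)$ gives only
\[
|M_n|\,|L_n|\;=\;|\mathrm{Jac}(f)|^n\cdot\frac{|\det A_p|}{|\det A_{f^n(p)}|}.
\]
For your bound you need $|\det A_{f^n(p)}|$ bounded below uniformly in $n$. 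This fails exactly when (i) the stable and unstable directions become tangent along the orbit, or (ii) the norms $\|\phi_n'(0)\|$, $\|(\psi^u_n)'(0)\|$ degenerate. Neither is ruled out by quasi-hyperbolicity alone; the whole difficulty of the theorem lies here. Note too that your ``simplest case'' (where $U$ is a neighborhood of infinity) only uses $|M_n|\to 0$, i.e.\ bare quasi-contraction, and never invokes substantial dissipativity --- that is a signal that the argument has not yet engaged the real obstruction.

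The paper's proof is structurally different and confronts exactly this issue. It stratifies $J^\star$ by the orders $\tau^s,\tau^u$ of the canonical families, picks a bridged point $p$ in a \emph{maximal} stratum $J^\star_{M^s,M^u}$, and uses Lemmas~\ref{bridgedorbit}, \ref{tauincrease}, \ref{taustrictincrease} and \ref{tangency} to show that on the orbit closure $O(p)$ there are no tangencies, the angle $D(q)$ is bounded below, and the parameterization derivatives do not degenerate. Only then does one obtain the honest contraction estimate $\|df^n_p v\|\le C_p|\delta|^n\|v\|$ on $E^s_p$ (Lemma~\ref{boundedangles}), which yields a growth order $-\log d/\log|\delta|<\tfrac12$ for $G^-\circ\psi_p$; Wiman's theorem then directly forces all sublevel sets to be bounded. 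No normal-limit argument is taken, and no area/sheet-counting is needed --- that final paragraph of yours is speculation rather than a proof, and the mismatch you describe does not by itself imply the bridges ``fill the parameter plane''.
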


We define
\[
O(p)=\overline{\bigcup_{n\in \mathbb N}f^n(p)}
\]
\begin{lemma}
\label{bridgedorbit}
If $p\in J^\star$ is stably-bridged then every point in $O(p)$ is stably-bridged.
\end{lemma}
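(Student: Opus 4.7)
The plan is to split the proof into two steps: (i) stable-bridgedness is preserved under the forward iteration of $f$, and (ii) the set of stably-bridged points in $J^\star$ is closed. Since $O(p)$ is by definition the topological closure of the forward orbit $\{f^n(p) : n \in \mathbb{N}\}$, these two facts together give the conclusion.

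For (i), I would apply Lemma \ref{BresciaSuni} to $f^{-1}$, which is quasi-expanding on the stable family $\widehat{\Psi}^s$ by the quasi-contracting hypothesis. The lemma yields, for each $\widehat{\psi}^s_p \in \widehat{\Psi}^s_p$, a companion $\widehat{\psi}^s_{f(p)} \in \widehat{\Psi}^s_{f(p)}$ and a nonzero scalar $\lambda$ with $|\lambda| \leq \gamma < 1$ satisfying $f \circ \widehat{\psi}^s_p(\zeta) = \widehat{\psi}^s_{f(p)}(\lambda \zeta)$. Combining this with the functional equation $G^- \circ f = G^-/d$ produces the sublevel-set identity
\[
\{G^- \circ \widehat{\psi}^s_{f(p)} < R\} = \lambda \cdot \{G^- \circ \widehat{\psi}^s_p < dR\},
\]
and since multiplication by the nonzero complex number $\lambda$ is a homeomorphism of $\mathbb{C}$ preserving unboundedness of connected components, the stably-bridged property transfers from $p$ to $f(p)$. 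By induction, each $f^n(p)$ is stably-bridged.

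For (ii), suppose for contradiction $p_n \to q$ in $J^\star$ with each $p_n$ stably-bridged but $q$ stably-unbridged. Pick $\widehat{\psi}^s_{p_n} \in \widehat{\Psi}^s_{p_n}$ stably-bridged and, using the compactness of $\widehat{\Psi}^s$ from Proposition \ref{normpropnormfam}, extract a subsequence converging locally uniformly to $\widehat{\psi}^s_q$. The key step is translating stable-bridgedness into an infinite projection degree statement about local stable manifolds: choosing a filtration radius $R_0$ with $K \subset \Delta^2_{R_0}$ and $R$ larger than $\sup_{\Delta^2_{R_0}} G^-$, any unbounded sublevel component of $\{G^- \circ \widehat{\psi}^s_{p_n} < R\}$ traps a piece of $W^s(p_n)$ that re-enters $\Delta^2_{R_0}$ infinitely often, forcing $W^s_{R_0}(p_n)$ to have infinite projection degree onto the $y$-axis. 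Then the limiting argument from the proof of Proposition \ref{prop:degreebound} applies verbatim: infinite degrees of $W^s_{R_0}(p_n)$ together with $p_n \to q$ force $\deg W^s_{R_1}(q) = \infty$ for any $R_1 > R_0$, contradicting the stable-unbridgedness of $q$. The main obstacle is securing this translation between the $G^-$-sublevel formulation of stable-bridgedness and the geometric degree of local stable manifolds at a uniform $R_0$; once in place, the remainder is a direct invocation of Proposition \ref{prop:degreebound}.
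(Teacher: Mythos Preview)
Your step (i) is correct and coincides with the paper's argument: the functional equation $f\circ\psi_p(\zeta)=\psi_{f(p)}(\lambda\zeta)$ together with $G^-\circ f=G^-/d$ transports an unbounded sublevel component at $p$ to one at $f(p)$, at the smaller level $R/d$.

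Step (ii), however, has a genuine gap. You claim that the set of stably-bridged points is closed, and try to prove it by translating bridgedness into ``$W^s_{R_0}(p_n)$ has infinite projection degree'' for a \emph{fixed} filtration radius $R_0$. This translation is not justified, for three reasons. First, bridgedness of $p_n$ only guarantees an unbounded component of $\{G^-\circ\widehat\psi^s_{p_n}<R_n\}$ for \emph{some} $R_n>0$, and nothing in your argument controls $R_n$ uniformly in $n$; in particular there is no reason the fixed $R$ you chose works for all $p_n$. Second, even at a fixed level the unbounded component need not contain $0$, so its image under $\psi_{p_n}$ need not lie in the connected component of $W^s(p_n)\cap\Delta^2_{R_0}$ through $p_n$. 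Third, the image of that component lies only in $\Delta^2_{Ce^{R_n}}$ (from the growth of $G^-$ on $V^-$), not in $\Delta^2_{R_0}$. You flag the translation as ``the main obstacle'' yourself, but do not resolve it; and the proof of Proposition~\ref{prop:degreebound} cannot be invoked, since its argument runs in the opposite direction (from uniform unbridgedness to a degree bound) and presupposes stable-unbridgedness.

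The paper avoids this detour entirely. For a limit point $q$ of $f^{n_k}(p)$ it does \emph{not} pass through local degrees; instead it exploits the specific information produced by step~(i): the unbounded components at $f^{n_k}(p)$ are the dilates $U_k=\lambda_{p,n_k}\cdot U_0$ of a single fixed set $U_0$, with $|\lambda_{p,n_k}|\le\kappa^{n_k}\to 0$. Choosing $\psi_k\to\widehat\psi_q$ and using locally uniform convergence of $g_k=G^-\circ\psi_k$ to $g_\infty=G^-\circ\widehat\psi_q$, the paper takes $V_m=U_{k_m}\cap\Delta_m$ and glues these sets together with a small disk $\Delta_\delta$ around $0$ (on which $g_\infty$ is small by continuity). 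Because $\lambda_{p,n_k}\to 0$, each $U_k$ eventually meets $\Delta_\delta$, so $V=\Delta_\delta\cup\bigcup_{m\ge M}V_m$ is connected, unbounded, and satisfies $g_\infty<2R$ on it. This directly exhibits $q$ as stably-bridged. The key point your argument misses is that one does not need the general closedness of bridgedness---only its propagation along the \emph{particular} sequence $f^{n_k}(p)$, where the shrinking dilation factors do all the work.
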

\begin{proof}
If $p$ is stably-bridged, given $\psi_p\in\Psi^s_p$, then there exists $R>0$ and a component $U_0$ of $\{G^-\circ\psi_p<R\}$ which is unbounded.

Consider first $q=f^n(p)$ and take $\psi_q\in\Psi^s_q$. Let $\lambda_{p,n} \in \mathbb C$ be such that $f^n\circ\psi_p(\zeta)=\psi_q(\lambda_{p,n}\cdot \zeta)$, so that
\[
G^-\circ\psi_q(\zeta)=\frac{1}{d^n}G^-\circ\psi_p\left(\frac{\zeta}{\lambda_{p,n}}\right).
\]
It is clear that if $\zeta\in\lambda_{p,n} \cdot U_0$  then $G^-\circ\psi_q(\zeta)<R/d^n$. Since $\lambda_{p,n}\cdot U_0$ is connected and unbounded, the point $q$ is stably-bridged.

Consider now $q\in O(p)\setminus \bigcup_n f^n(p)$ and take $n_k$ such that $f^{n_k}(p)\to q$. For every $k$ choose $\psi_k\in\Psi_{f^{n_k}(p)}$. Since $f$ is quasi-hyperbolic, by taking a subsequence of $n_k$ if necessary, we may suppose that the sequence $\psi_k$ converges locally uniformly on $\mathbb C$ to $\widehat\psi_q\in\widehat\Psi_q$.

As above, let $\lambda_{p,n_k}$ be the sequence of constants such that $G^-\circ\psi_k(\zeta)=\frac{1}{d^n}G^-\circ\psi_p\left(\frac{\zeta}{\lambda_{p,n_k}}\right)$. It is clear that $G^-\circ \psi_k(\zeta)<R/d^n\leq R$ for $\zeta\in U_k:=\lambda_{p,n_k}\cdot U_0$, moreover, by quasi-hyperbolicity, there exists $\kappa<1$ such that $|\lambda_{p,n_k}|\leq \kappa^{n_k}$.

Let $g_k=G^-\circ \psi_k$ and $g_\infty=G^-\circ\widehat\psi_q$. The sequence $g_k$ converges locally uniformly on $\mathbb C$ to $g_\infty$. Let $m\in\mathbb N$, then we can choose $k_m$ such that if $k\geq k_m$ we have
\[
\sup_{|\zeta|\leq m}|g_k(\zeta)-g_\infty(\zeta)|<R.
\]
Let $V_m=U_{k_m}\cap\Delta_m$. If $\zeta\in V_m$, then $g_\infty(\zeta)<2R$. Since $g_\infty(0)=0$ and $g_\infty$ is continuous, then there exists $\delta$ such that $g_\infty(\zeta)<2R$ for $\zeta\in\Delta_\delta$.

Since $U_k=\lambda_{p,n_k}\cdot U_0$ and $\lambda_{p,n_k}\to 0$, there exists $M$ such that $V_m\cap \Delta_\delta\neq\emptyset$ for $m\geq M$, and since for every $k$ the set $U_k$ is unbounded, we have $diam(V_m)\to\infty$ as $m\to\infty$. Hence the set
\[
V=\Delta_\delta\cup\bigcup_{m\geq M}V_m
\]
is unbounded and connected. If $\zeta\in V$ then $g_\infty(\zeta)<2R$. Therefore $q$ is stably-bridged.
\end{proof}

Given $p\in J^\star$ we define the maximal orders $\tau^s(p)$ and $\tau^u(p)$ of the respective families $\widehat\Psi_p^s$ and $\widehat\Psi_p^u$ as in \eqref{tau}. We let
\[
J^\star_{m^s,m^u}:=\left\{p\in J^\star|\,\tau^s(p)=m^s,\,\tau^u(p)=m^u\right\}.
\]
\begin{lemma}
\label{tauincrease}
Let $p\in J^\star$. For every $q\in O(p)$, we have $\tau^{s/u}(q)\geq \tau^{s/u}(p)$.
\end{lemma}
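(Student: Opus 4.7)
The plan is to first establish the inequality on forward iterates using the functional equation, and then to extend to the closure by a normal families argument. I will treat $\tau^u$; the proof for $\tau^s$ is identical up to replacing $f$ with $f^{-1}$.

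First I would extend the functional equation \eqref{functionaleq} to normal limits. Let $\widehat\psi_p\in\widehat\Psi_p$ be a normal limit of injective parameterizations $\psi_{p_n}\in\Psi_{p_n}$ with $p_n\to p$. Then $f\circ\psi_{p_n}(\zeta)=\psi_{f(p_n)}(\lambda_{p_n}\zeta)$ for constants $\lambda_{p_n}$, and by Corollary \ref{boundedlambda} the sequence $(\lambda_{p_n})$ is bounded. By the normality of the canonical family, a subsequence of $\psi_{f(p_n)}$ converges to some $\widehat\psi_{f(p)}\in\widehat\Psi_{f(p)}$, and passing to the limit yields
\[
f\circ\widehat\psi_p(\zeta)=\widehat\psi_{f(p)}(\lambda\zeta)
\]
for some $\lambda\in\mathbb C$. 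This is the forward counterpart of Lemma \ref{BresciaSuni}.

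Next I would compare Taylor expansions at $0$. Write $\widehat\psi_p(\zeta)=p+a_m\zeta^m+O(\zeta^{m+1})$ with $a_m\neq 0$, where $m=ord(\widehat\psi_p)$. Since $f$ is an automorphism of $\mathbb C^2$, $Df_p$ is invertible, so the leading term of $f\circ\widehat\psi_p$ is $f(p)+Df_p(a_m)\zeta^m$ with $Df_p(a_m)\neq 0$. Matching this against the expansion of $\widehat\psi_{f(p)}(\lambda\zeta)$ forces $ord(\widehat\psi_{f(p)})=m$. By Lemma \ref{taubound} the supremum $\tau^u(p)$ is a supremum of integers in a bounded range, hence attained; choosing $\widehat\psi_p$ with $ord(\widehat\psi_p)=\tau^u(p)$ therefore gives $\tau^u(f(p))\geq\tau^u(p)$, and by induction $\tau^u(f^n(p))\geq\tau^u(p)$ for every $n\geq 0$.

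Finally, for an arbitrary $q\in O(p)$, pick $n_k$ with $f^{n_k}(p)\to q$ and, for each $k$, choose $\widehat\psi_k\in\widehat\Psi_{f^{n_k}(p)}$ realizing $ord(\widehat\psi_k)=\tau^u(f^{n_k}(p))\geq\tau^u(p)$. By the compactness of $\widehat\Psi$ (Proposition \ref{normpropnormfam}), a subsequence converges locally uniformly to some $\widehat\psi_q\in\widehat\Psi_q$. Locally uniform convergence of holomorphic maps implies convergence of all derivatives at $0$, so $\widehat\psi_q^{(j)}(0)=\lim_k\widehat\psi_k^{(j)}(0)=0$ for every $j<\tau^u(p)$, whence $ord(\widehat\psi_q)\geq\tau^u(p)$ and therefore $\tau^u(q)\geq\tau^u(p)$. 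The only delicate point is lifting the functional equation to normal limits; once the uniform bound from Corollary \ref{boundedlambda} and the compactness of $\widehat\Psi$ are invoked, the remainder is a routine chain-rule computation together with the standard normality argument already employed in Lemma \ref{BresciaSuni}.
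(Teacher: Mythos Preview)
Your proof is correct and follows essentially the same approach as the paper: extend the functional equation to normal limits via Corollary \ref{boundedlambda} and compactness, deduce $\tau^u(f(p))\geq\tau^u(p)$ from the order comparison, and then pass to limit points using compactness of $\widehat\Psi$. Your Taylor-expansion argument is slightly sharper (yielding $ord(\widehat\psi_{f(p)})=m$ rather than merely $\geq m$, since $\lambda\neq 0$ as the right-hand side is nonconstant), and you make explicit that the supremum defining $\tau^u$ is attained; otherwise the arguments are the same.
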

\begin{proof}
We will give the proof for the unstable family, the proof is similar for the stable family.

Suppose that $\tau^u(p)=m_p$ and let $\widehat\psi_p\in\widehat\Psi^u_p$ such that $ord(\widehat\psi_p)=m_p$. Take a sequence $p_n\in J^\star$ and $\psi_{p_n}\in\Psi^u_{p_n}$ satisfying $\psi_{p_n}\to \widehat\psi_p$. We also choose a sequence $\psi_{f(p_n)}\in\Psi^u_{f(p_n)}$, and constants  $\lambda_{p_n}$ for which $f\circ\psi_{p_n}(\zeta)=\psi_{f(p_n)}(\lambda_{p_n} \cdot \zeta)$. By Corollary \ref{boundedlambda} there exists $K>1$ independent of $n$ for which $|\lambda_{p_n}|\leq K$.

By taking a subsequence if necessary, we may assume that $\psi_{f(p_n)}\to\widehat\psi_{f(p)}\in\widehat\Psi^u_{f(p)}$, locally uniformly on $\mathbb C$, and that $\lambda_{p_n}\to\lambda_p$. We obtain
\[
f\circ\widehat\psi_p(\zeta)=\widehat\psi_{f(p)}(\lambda_p \cdot \zeta).
\]
It follows that $ord(\widehat\psi_{f(p)})\geq ord(\widehat\psi_p)$ and thus $\tau^u(f(p))\geq \tau^u(p)$.

Let $q\in O(p)$ and take a sequence $n_k$ such that $f^{n_k}(p)\to q$. For every $k$, $\tau^u(f^{n_k}(p))\geq\tau^u(p)$, therefore we can take $\widehat\psi_k\in\widehat\Psi^u_{f^{n_k}(p)}$ with $ord(\widehat\psi_k)\geq m_p$. By taking a subsequence if necessary, we may suppose that $\widehat\psi_k\to\widehat\psi_\infty\in\widehat\Psi^u_q$. It follows that $ord(\widehat\psi_\infty)\geq m_p$, therefore $\tau^u(q)\geq\tau^u(p)$.
\end{proof}
\begin{remark}
Using the inverse function $f^{-1}$ it follows that $\tau^{s/u}(p)\geq\tau^{s/u}(f(p))$. As a consequence the sets $J^\star_{m^s,m^u}$ are invariant with respect to $f$ and $f^{-1}$.
\end{remark}
\begin{lemma}
\label{taustrictincrease}
Let $p\in J^\star$ and let $\psi_n\in\Psi^{u}_{f^n(p)}$ (respectively $\Psi^s_{f^n(p)}$). If there exists a subsequence $n_k$ such that $\Vert\psi'_{n_k}(0)\Vert\to 0$, then for every $q$ in the limit set of the sequence $f^{n_k}(p)$ we have $\tau^{u}(q)>\tau^{u}(p)$ (respectively $\tau^{s}(q)>\tau^{s}(p)$).
\end{lemma}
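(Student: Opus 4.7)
The plan is to prove the unstable statement; the stable one follows by applying it to $f^{-1}$. Set $m:=\tau^u(p)$. First I would use normality of the canonical family $\Psi^u$ (Theorem \ref{bignormalfamily}) and compactness of $\widehat\Psi^u$ (Proposition \ref{normpropnormfam}) to extract a subsequence, still denoted $n_k$, along which $f^{n_k}(p)\to q$ and $\psi_{n_k}\to\widehat\psi_q\in\widehat\Psi^u_q$ locally uniformly. Uniform convergence of derivatives combined with the hypothesis $\Vert\psi'_{n_k}(0)\Vert\to 0$ then forces $\widehat\psi'_q(0)=0$, so in particular $ord(\widehat\psi_q)\geq 2$.

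Next I would produce a \emph{second} normal limit at $q$ whose order at $0$ is at least $m$. By Lemma \ref{tauincrease} each point $f^{n_k}(p)$ admits some $\widehat\psi^{(n_k)}\in\widehat\Psi^u_{f^{n_k}(p)}$ with $ord(\widehat\psi^{(n_k)})\geq m$. After a further extraction $\widehat\psi^{(n_k)}\to\widehat\psi_\infty\in\widehat\Psi^u_q$ locally uniformly, and since the first $m-1$ derivatives at $0$ vanish along the sequence they also vanish in the limit, so $ord(\widehat\psi_\infty)\geq m$.

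The crux of the argument compares the two sequences of parametrizations. Both $\psi_{n_k}$ and $\widehat\psi^{(n_k)}$ parametrize the same unstable manifold $W^u(f^{n_k}(p))$ and $\psi_{n_k}$ is injective, so $P_{n_k}:=\psi_{n_k}^{-1}\circ\widehat\psi^{(n_k)}$ is a polynomial in the family $\mathcal H$ introduced before Lemma \ref{normalfamilypoly}. That lemma yields uniform bounds on the coefficients of the $P_{n_k}$ and, along a further subsequence, a limit $P_{n_k}\to P_\infty$ which is a non-constant polynomial with $P_\infty(0)=0$. Because $\psi_{n_k}$ is locally biholomorphic near $0$, the identity $\widehat\psi^{(n_k)}=\psi_{n_k}\circ P_{n_k}$ forces $ord(P_{n_k})=ord(\widehat\psi^{(n_k)})\geq m$, and the vanishing of the first $m-1$ derivatives at $0$ persists in the limit. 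Passing to the limit in this identity gives $\widehat\psi_\infty=\widehat\psi_q\circ P_\infty$, and the composition rule for orders at $0$ yields
\[
ord(\widehat\psi_\infty)\;=\;ord(\widehat\psi_q)\cdot ord(P_\infty)\;\geq\;2m\;>\;m,
\]
so $\tau^u(q)\geq ord(\widehat\psi_\infty)>m=\tau^u(p)$.

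The delicate point I expect is verifying that $P_{n_k}$ genuinely lies in $\mathcal H$ so that Lemma \ref{normalfamilypoly} applies; once one knows the $P_{n_k}$ form a bounded family of polynomials of bounded degree with non-constant limits, the conclusion reduces to tracking two simultaneous diagonal extractions and invoking the multiplicativity of orders of vanishing under composition.
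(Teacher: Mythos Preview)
Your argument is correct and follows essentially the same route as the paper: both produce, via Lemma~\ref{tauincrease}, maps $\widehat\psi^{(n_k)}\in\widehat\Psi^u_{f^{n_k}(p)}$ of order $\geq m$, factor them as $\widehat\psi^{(n_k)}=\psi_{n_k}\circ P_{n_k}$ with $P_{n_k}\in\mathcal H$, and appeal to Lemma~\ref{normalfamilypoly} for uniform control on the $P_{n_k}$.

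The only difference is in how the final inequality is extracted. The paper stays at finite $k$ and computes the $m$-th derivative directly: since $P_{n_k}$ has order $\geq m$ and $\psi_{n_k}$ is an immersion, the chain rule collapses to
\[
\bigl(\widehat\psi^{(n_k)}\bigr)^{(m)}(0)=\psi'_{n_k}(0)\cdot P_{n_k}^{(m)}(0),
\]
and the boundedness of the coefficients of $P_{n_k}$ together with $\Vert\psi'_{n_k}(0)\Vert\to 0$ forces this to vanish in the limit, giving $\tau^u(q)>m$. You instead pass to the limit in the factorization itself, obtaining $\widehat\psi_\infty=\widehat\psi_q\circ P_\infty$ with $ord(\widehat\psi_q)\geq 2$ and $ord(P_\infty)\geq m$, and then invoke multiplicativity of orders under composition. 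Your packaging is slightly more conceptual and yields the sharper (though unneeded) bound $\tau^u(q)\geq 2m$; the paper's is a touch more direct since it never needs to identify the limit $\widehat\psi_q$ of the injective parametrizations.
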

\begin{proof}
We again prove the statement only for the unstable family.

Given $n\in\mathbb N$, by the previous lemma, $\tau^u(f^n(p))\geq \tau^u(p)=:m_p$, hence there exists $\widehat\psi_n\in\widehat \Psi_{f^n(p)}$ such that $ord(\widehat\psi_n)\geq m_p$. For every $n$ there exists $h_n\in \mathcal H$ with $\widehat\psi_n=\psi_n\circ h_n$. Since $\psi_n'(0)\neq 0 $ and the degree of $h_n$ is uniformly bounded by a certain $M>0$, the polynomial $h_n$ can be written as
\[
h_n(\zeta)=a_M(n)\zeta^M+\dots+a_{m_p}(n)\zeta^{m_p}.
\]
Since $h_n^{(j)}(0) = 0$ for all $j < m_p$ we have
\begin{align*}
\widehat\psi_n^{(m_p)}(0)&=\psi_n'(0)h_n^{(m_p)}(0)\\
&=\psi_n'(0)a_{m_p}(n)m_p!
\end{align*}
Since the constants $a_{m_p}$ have bounded norms by Lemma \ref{normalfamilypoly}, it follows that if $\psi_{n_k}'(0)\to 0$ then $\widehat \psi_n^{(m_p)}(0) \rightarrow 0$. Thus for every $q$ in the limit set of $f^{n_k}(p)$ we have $\tau^u(q) > m_p =  \tau^u(p)$.
\end{proof}

Given $p\in J^\star$, we define the unstable and the stable directions $E^{u/s}_p$ as the tangent spaces $T_pW^{u/s}(p)$. Given nonzero vectors $v^s\in E^s_p$ and $v^u\in E^u_p$ we define
\[
D(p):=\frac{|\det(v^s|v^u)|}{\Vert v^s\Vert\,\Vert v^u\Vert}.
\]
It is clear that $0\leq D(p)\leq 1$, that the value of $D$ at the point $p$ does not depend from the choice of $v^s$ and $v^u$, and that $D(p)=0$ if and only if $E^s_p=E^u_p$.

\begin{lemma}
Let $p\in J^\star_{m^s,m^u}$. Suppose that there exists a constant $\gamma>0$ such that $D(q)\geq \gamma$ for every $q\in O(p)$. Suppose also that $O(p)\subset J^\star_{m^s,m^u}$. Then there exists a constant $C_p>0$ such that for every $v_p\in E^s_p$ we have that
\[
\Vert df^n_pv_p\Vert\leq C_p |\delta|^n\Vert v_p\Vert
\]
where $\delta=\det(df_p)$.
\end{lemma}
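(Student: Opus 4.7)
The strategy is to decompose $df^n_p$ along the splitting $E^s_p\oplus E^u_p$ and combine three ingredients: quasi-expansion along the unstable direction, the constancy of the Jacobian $\det(df)=\delta$, and the hypothesis $D\geq\gamma$ along $O(p)$. A key preparatory observation is that along the orbit the parametrization derivatives do not degenerate. Pick $\psi^u_n\in\Psi^u_{f^n(p)}$ and $\psi^s_n\in\Psi^s_{f^n(p)}$ and set $c_n:=\Vert(\psi^u_n)'(0)\Vert$ and $c'_n:=\Vert(\psi^s_n)'(0)\Vert$. Normality of $\Psi^{u/s}$ gives uniform upper bounds. For the lower bounds I would argue by contradiction using Lemma~\ref{taustrictincrease}: if $c_{n_k}\to 0$ along a subsequence, a limit point $q$ of $f^{n_k}(p)$ would satisfy $\tau^u(q)>m^u$, contradicting $O(p)\subset J^\star_{m^s,m^u}$; the analogous argument for $f^{-1}$ handles $c'_n$. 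Thus $c\leq c_n, c'_n\leq C$ for some $0<c\leq C<\infty$.

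Letting $v^{u/s}_n$ denote the unit vector along $(\psi^{u/s}_n)'(0)$, I would differentiate the functional equation \eqref{functionaleq}, and its analogue for $f^{-1}\circ\psi^s$, at $0$ to obtain $df_{f^n(p)}v^{u/s}_n=\alpha^{u/s}_n v^{u/s}_{n+1}$ with
\[
|\alpha^u_n|=|\lambda^u_n|\,\frac{c_{n+1}}{c_n},\qquad |\alpha^s_n|=\frac{1}{|\lambda^s_n|}\,\frac{c'_{n+1}}{c'_n},
\]
where $|\lambda^u_n|,|\lambda^s_n|\geq\kappa>1$ by Theorem~\ref{equivalentdefquasihyp}. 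Next, expressing $df_q$ in the (non-orthogonal) frame $(v^s_q,v^u_q)$ at source and $(v^s_{f(q)},v^u_{f(q)})$ at target, a change-of-basis determinant computation gives
\[
|\alpha^s_q\alpha^u_q|=|\delta|\,\frac{D(q)}{D(f(q))}.
\]

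Taking the product over $q=p,f(p),\dots,f^{n-1}(p)$, the $D$-factors telescope to $D(p)/D(f^n(p))$, and the hypothesis $D\geq\gamma$ on $O(p)$ bounds this by $1/\gamma$. The product $\prod_j|\alpha^u_{f^j(p)}|$ telescopes the ratios $c_{j+1}/c_j$ to $c_n/c_0\geq c/C$, and the unstable multipliers contribute $\prod_j|\lambda^u_{f^j(p)}|\geq\kappa^n\geq 1$. Dividing yields
\[
\prod_{j=0}^{n-1}|\alpha^s_{f^j(p)}|\;\leq\;\frac{C}{\gamma c}\,|\delta|^n,
\]
and since $\Vert df^n_p v_p\Vert=\Vert v_p\Vert\prod_j|\alpha^s_{f^j(p)}|$ for $v_p\in E^s_p$, the lemma follows with $C_p:=C/(\gamma c)$. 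The main obstacle is the lower bound on the parametrization norms $c_n,c'_n$: it is precisely here that the full hypothesis $O(p)\subset J^\star_{m^s,m^u}$ is essential, preventing a collapse of the parametrization that would otherwise decouple the estimates on $\alpha^{u/s}_n$ from the quasi-expansion bounds on $\lambda^{u/s}_n$.
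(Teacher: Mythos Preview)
Your proof is correct and follows essentially the same strategy as the paper's: both establish a lower bound on $\Vert(\psi^u_n)'(0)\Vert$ via Lemma~\ref{taustrictincrease} together with the hypothesis $O(p)\subset J^\star_{m^s,m^u}$, then combine unstable expansion with the determinant relation and the angle bound $D\geq\gamma$ to control the stable contraction. Your step-by-step telescoping of the multipliers $\alpha^{u/s}_n$ is equivalent to the paper's global $SL(2,\mathbb C)$ change-of-basis computation; note, however, that your introduction of $\psi^s_n$, $c'_n$, and $\lambda^s_n$ is superfluous, since the bound on $\prod_j|\alpha^s_{f^j(p)}|$ comes entirely from the determinant identity and the lower bound on $\prod_j|\alpha^u_{f^j(p)}|$ --- the paper's argument uses only the unstable parametrizations.
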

\begin{proof}
It is sufficient to prove the statement for unit vectors $v_p$. Choose unit vectors in $E^{s/u}_p$, which we will denote by $v^{s/u}_0$. Then for every $n\in \mathbb N$ the unit vectors $v_n^{s/u}=\frac{df^nv^{s/u}_0}{\Vert df^nv^{s/u}_0\Vert}$ lie in $E^{s/u}_{f^n(p)}$.

We can choose $U_n\in SL(2,\mathbb C)$ such that $U_n(v^s_n)=D(f^n(p)) e_1$ and $U_n(v^u_n)=D(f^n(p)) e_2$, where $e_1=(1,0)$ and $e_2=(0,1)$.

The function $G_n:=U_n\, df^n_p\,U_0^{-1}$ is a linear map from $\mathbb C^2$ to itself with
\begin{align*}
G_ne_1&=\frac{D(f^n(p))}{D(p)}\Vert df^n_p v_0^s\Vert e_1=c_{1,n}(p)e_1, \; \; \mathrm{and}\\
G_ne_2&=\frac{D(f^n(p))}{D(p)}\Vert df^n_p v_0^u\Vert e_2=c_{2,n}(p)e_2.
\end{align*}
Thus $G_n$ is a diagonal matrix with determinant $\delta^n$, hence $c_{1,n}(p)c_{2,n}(p)=\delta^n$.

Take $\psi_p\in\Psi^u_p$ and for every $n$ take $\psi_n\in\Psi^u_{f^n(p)}$. If there exists a subsequence $n_k$ for which $\psi_{n_k}'(0)\to 0$, then by the previous lemma there exists $q\in O(p)$ with $\tau^u(q)>\tau^s(p)$ which contradicts our hypothesis $O(p) \subset J^\star_{m^s,m^u}$. Therefore there exist a constant $k>0$ such that $\Vert\psi_n'(0)\Vert\geq k$ for every $n\in \mathbb N$.

Since $f$ is quasi-expanding, there exists $\lambda_{p,n}$, with $|\lambda_{p,n}|>\kappa^n$ for some $\kappa>1$, such that $f^n\circ\psi_p(\zeta)=\psi_n(\lambda_{p,n}\zeta)$. We obtain that
\begin{align*}
\Vert df^n_p\psi'_p(0)\Vert&= \Vert\lambda_{p,n}\psi'_n(0)\Vert\\
&\geq k\kappa^n.
\end{align*}
The vector $v^u_0$ is a constant multiple of $\psi'_p(0)$, therefore $\Vert df_p^n v_0^u\Vert\to \infty$ as $n\to\infty$. Our hypothesis that $D(q) \ge \gamma$ for every $q \in O(p)$ implies that $|c_{2,n}(p)|\geq \Vert df_p^nv_0^u\Vert/\gamma$, which gives $c_{2,n}(p)\to \infty$. Since $c_{1,n}(p) c_{2,n}(p) = \delta^n$ we obtain $|c_{1,n}(p)|\leq|\delta|^n$ for $n$ sufficiently large. We conclude that
\begin{align*}
\Vert df_p^n v^s_0\Vert&=c_{1,n}(p)\frac{D(p)}{D(f^n(p))}\\
&\leq C_p|\delta|^n,
\end{align*}
which completes the proof.
\end{proof}

Recall that a subharmonic function $g:\mathbb C\rightarrow \mathbb R$ is said to have order of growth $r$ if $g(z)=O(|z|^r)$ as $z\to\infty$. We will use the following classical result of Wiman \cite{Wi}.

\begin{theorem}[Wiman]
Let $g$ be a non-constant subharmonic function with order of growth strictly less than $1/2$. Then all connected components of $\{g\leq R\}$ are bounded for every $R\geq 0$.
\end{theorem}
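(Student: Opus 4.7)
The plan is to reduce Wiman's theorem to the companion $\cos\pi\rho$ estimate, also due to Wiman (and extended to subharmonic functions on $\mathbb C$ by Heins). Let $\rho < 1/2$ denote the order of growth, so that $M(r) := \sup_{|z|=r} g(z) = O(r^{\rho})$, and write $A(r) := \inf_{|z|=r} g(z)$. The key external input is the $\cos\pi\rho$ theorem: for a non-constant subharmonic function on $\mathbb C$ of order $\rho < 1$,
\[
\liminf_{r \to \infty} \frac{A(r)}{M(r)} \;\ge\; \cos(\pi \rho).
\]
Since $\rho < 1/2$ we have $\cos(\pi\rho) > 0$.

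Next I would observe that $M(r)$ is an increasing function of $r$ (by the maximum principle for subharmonic functions) and cannot be bounded: if it were, then $g$ would be bounded above on $\mathbb C$, hence constant by the Liouville theorem for subharmonic functions, contradicting the hypothesis. Hence $M(r) \to \infty$ as $r \to \infty$. Combining with the displayed inequality yields a sequence $r_k \to \infty$ along which $A(r_k) \to \infty$; in particular, for every fixed $R \ge 0$ we get $A(r_k) > R$ for all $k$ sufficiently large.

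The inequality $A(r_k) > R$ says precisely that the closed set $\{g \le R\}$ is disjoint from the circle $\{|z|=r_k\}$. Passing to the sequence of circles, the complement $\mathbb C \setminus \bigcup_{k \ge k_0} \{|z| = r_k\}$ is the disjoint union of the disk $\{|z| < r_{k_0}\}$ and the bounded annuli $\{r_k < |z| < r_{k+1}\}$. Each connected component of $\{g \le R\}$ is connected and avoids every circle $\{|z|=r_k\}$ (for $k \ge k_0$), so it must lie inside one of these bounded pieces, and is therefore bounded. This establishes the theorem.

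The main obstacle is the $\cos\pi\rho$ estimate itself, which requires a nontrivial Phragmén--Lindel\"of / harmonic measure argument on complementary components of level sets. I would cite it from a standard source (e.g.~Hayman--Kennedy, \emph{Subharmonic Functions}, or Heins's original extension of Wiman's entire-function result); granted that citation, the remaining topological and Liouville-type steps above are short and structural.
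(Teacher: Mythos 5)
The paper gives no proof of this statement at all---it is quoted as a classical result with a reference to Wiman \cite{Wi}---so your argument is necessarily a different route: you reduce the bounded-components formulation to the $\cos\pi\rho$ theorem and then carry out the (easy) Liouville and topology steps. That reduction is sound: $M(r)$ is nondecreasing and unbounded because a subharmonic function on $\mathbb C$ that is bounded above is constant, and once you have circles $|z|=r_k$ with $r_k\to\infty$ on which $g>R$, every connected component of $\{g\le R\}$ is trapped in a bounded disk or annulus between consecutive circles. One correction is needed in your key input, however: the $\cos\pi\rho$ theorem (Wiman for entire functions, Heins for subharmonic functions; see Hayman--Kennedy) asserts $\limsup_{r\to\infty}A(r)/M(r)\ge\cos\pi\rho$, not $\liminf$. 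The liminf version as you state it is false: for $g=\log|f|$ with $f$ entire of order $\rho>0$ having infinitely many zeros, $A(r)=-\infty$ for infinitely many $r$, so the liminf is $-\infty$. Fortunately the only consequence you use is the existence of a sequence $r_k\to\infty$ with $A(r_k)\to\infty$, which follows from the limsup form together with $M(r)\to\infty$ and $\cos\pi\rho>0$, so your proof survives verbatim after replacing liminf by limsup. Note also that the full strength of $\cos\pi\rho$ is not needed: the weaker classical minimum-principle statement, that a non-constant subharmonic function of order strictly less than $1/2$ satisfies $\limsup_{r\to\infty}\inf_{|z|=r}g(z)=+\infty$, already produces the required circles, and that is essentially the result the paper is citing; what your write-up adds is the standard deduction of the bounded-level-component formulation from the minimum-modulus formulation.
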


Wiman's Theorem has recently been used for the study of substantially dissipative H\'enon maps in \cite{DL} and \cite{LP} in similar ways as we use it here, namely to show that certain stable manifolds are unbridged.

\begin{lemma}
\label{boundedangles}
Let $p\in J^\star_{m^s,m^u}$, and assume that that $O(p)\subset J^\star_{m^s,m^u}$. Further suppose that there exists a constant $\gamma>0$ such that $D(q)\geq \gamma$ for every $q\in O(p)$. Then $p$ is stably-unbridged.
\end{lemma}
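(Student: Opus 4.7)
The plan is to show that for each $\psi\in\Psi^s_p$ the subharmonic function $g(\zeta):=G^-\circ\psi(\zeta)$ has order of growth strictly less than $1/2$; Wiman's theorem will then force every sublevel set $\{g\leq R\}$ to have only bounded connected components, which is exactly stable-unbridgedness of $\psi$. The function $g$ is non-constant (since $\max_{|\zeta|\leq 1}g=1$ by normalization), and since every $\widehat\psi\in\widehat\Psi^s_p$ is a reparametrization $\psi\circ h$ of some $\psi\in\Psi^s_p$ by a non-constant polynomial $h$ (Corollary \ref{polynomialconnection}), this will suffice. Substantial dissipativity $|\delta|<1/d^2$ is exactly the hypothesis that will pin the growth exponent below $1/2$.

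First I would write $f^n\circ\psi(\zeta)=\psi_n(\mu_n\,\zeta)$, with $\psi_n\in\Psi^s_{f^n(p)}$ and $\mu_n\in\mathbb C$, using the stable analogue of \eqref{functionaleq}. Comparing derivatives at $0$ gives $\mu_n\psi_n'(0)=df^n_p\,\psi'(0)$. Because $\psi'(0)\in E^s_p$, the preceding lemma bounds the right-hand side by $C_p|\delta|^n\|\psi'(0)\|$. To control $\|\psi_n'(0)\|$ from below, I would apply the stable case of Lemma \ref{taustrictincrease}: if a subsequence $\|\psi_{n_k}'(0)\|$ tended to $0$, any limit $q\in O(p)$ of $f^{n_k}(p)$ would satisfy $\tau^s(q)>\tau^s(p)=m^s$, contradicting $O(p)\subset J^\star_{m^s,m^u}$. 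Hence $\|\psi_n'(0)\|\geq k>0$ uniformly in $n$, and
\[
|\mu_n|\leq C\,|\delta|^n
\]
for a constant $C=C(p,\psi)$.

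Using $G^-\circ f=\tfrac{1}{d}G^-$ together with the normalization $\max_{|\omega|\leq 1}G^-\circ\psi_n(\omega)=1$, we have $g(\zeta)=d^n\,G^-\circ\psi_n(\mu_n\zeta)\leq d^n$ whenever $|\mu_n\zeta|\leq 1$. For $|\zeta|=R$ large, I would pick $n(R)$ to be the smallest integer with $C|\delta|^n R\leq 1$, giving $n(R)\leq \log(CR)/\log(1/|\delta|)+O(1)$ and hence
\[
g(\zeta)\leq A\cdot R^{\alpha},\qquad \alpha=\frac{\log d}{\log(1/|\delta|)}.
\]
Substantial dissipativity forces $\log(1/|\delta|)>2\log d$, so $\alpha<1/2$, and Wiman's theorem completes the proof.

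The step I expect to require the most care is the uniform lower bound on $\|\psi_n'(0)\|$; this is where the hypotheses $O(p)\subset J^\star_{m^s,m^u}$ and Lemma \ref{taustrictincrease} enter essentially, guaranteeing that the shrinkage rate of $\mu_n$ inherits the full $|\delta|^n$ contraction from the preceding lemma and is not spoiled by degeneracy of the parametrizations along the orbit. Everything else is a direct interplay between the functional equation for $G^-$ and the dissipativity bound.
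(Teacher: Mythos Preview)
Your proposal is correct and follows essentially the same approach as the paper: both arguments use the preceding lemma to get $\|df^n_p\,\psi'(0)\|\le C_p|\delta|^n$, invoke Lemma~\ref{taustrictincrease} (via the hypothesis $O(p)\subset J^\star_{m^s,m^u}$) to bound $\|\psi_n'(0)\|$ away from zero and hence obtain $|\mu_n|\le C|\delta|^n$, and then combine the functional equation $g(\zeta)=d^n\,G^-\circ\psi_n(\mu_n\zeta)$ with the normalization $\max_{|\omega|\le 1}G^-\circ\psi_n(\omega)=1$ to deduce that $g$ has order of growth $\log d/\log(1/|\delta|)<1/2$, after which Wiman's theorem applies. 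The only superficial difference is bookkeeping: the paper estimates $g$ on dyadic annuli $|\delta|^{1-n}/C'\le|\zeta|\le|\delta|^{-n}/C'$, whereas you choose the optimal $n(R)$ for each radius $R$; these are equivalent.
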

\begin{proof}
Take $\psi_p\in\Psi^s_p$ and for every $n$ take $\psi_n\in\Psi^s_{f^n(p)}$. By the previous lemma there exists a constant $C_p>0$ such that
\[
\Vert df^n_p\psi_p'(0)\Vert\leq C_p|\delta|^n,
\]
where $\delta=\det(df_p)$. As in the proof of the previous lemma there exists $k>0$ such that for every $n$ we have $\Vert \psi'_n(0)\Vert\geq k$. For every $n$ there exits a constant $\lambda_{p,n}$ such that $f^n\circ \psi_p(\zeta)=\psi_n(\lambda_{p,n}\zeta)$, and with
\begin{align*}
|\lambda_{p,n}|&=\frac{\Vert df^n_p\psi_p'(0)\Vert}{\Vert \psi_n'(0)\Vert}\\
&\leq C'_p|\delta|^n.
\end{align*}
Let $g_n=G^-\circ\psi_n$ and $g_p=G^-\circ\psi_p$ so that
\[
g_p(\zeta)=d^ng_n(\lambda_{p,n}\cdot \zeta).
\]
Let $n\in\mathbb N$ and take $\frac{|\delta|^{1-n}}{C_p'}\leq|\zeta|\leq \frac{|\delta|^{-n}}{C_p'}$. Then by \eqref{normalizationprop} we obtain
\begin{align*}
g_p(\zeta)&=d^ng_n(\lambda_{p,n}\cdot \zeta)\\
&\leq d^{1-\frac{\log(C'_p)}{\log|\delta|}}\,d^{-\frac{\log|\zeta|}{\log|\delta|}}\\
&\leq M|\zeta|^{-\frac{\log|d|}{\log|\delta|}}.
\end{align*}
Since the final estimate does not depend on $n$, it hold for all $\zeta \in \mathbb C$. This implies that $g_p$ has order of growth $-\frac{\log|\zeta|}{\log|\delta|}$ and by substantial dissipativity we get that $-\frac{\log|d|}{\log|\delta|}<\frac{1}{2}$.

By Wiman's Theorem it follows that every connected component of $\{g_p=G^-\circ\psi_p<R\}$ is bounded for every $R>0$, and thus the point $p$ is stably-unbridged.
\end{proof}

The following Lemma follows from Proposition 4.2 of \cite{BSh}.

\begin{lemma}
\label{tangency}
Suppose that $p\in J^\star_{m^s,m^u}$ is a point of tangency, i.e. $E^s_p=E^u_p$. Then the forward limit set $\omega(p)$ is contained in
$$
\bigcup_{p\geq m^s, q>m^u} J^\star_{p,q}.
$$
\end{lemma}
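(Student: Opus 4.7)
The plan is to deduce the statement from Lemma \ref{taustrictincrease}: if I can show that for any choice of parametrizations $\psi^u_n\in\Psi^u_{f^n(p)}$ the derivatives $(\psi^u_n)'(0)$ tend to zero along the forward orbit, then for every $q\in\omega(p)$ I obtain $\tau^u(q)>\tau^u(p)=m^u$. The complementary bound $\tau^s(q)\geq m^s$ comes for free from Lemma \ref{tauincrease}. Together these two inequalities place $q$ in $J^\star_{a,b}$ with $a\geq m^s$, $b>m^u$, which is exactly the desired inclusion.

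So the task reduces to producing the collapse $\Vert(\psi^u_n)'(0)\Vert\to 0$ using only the tangency hypothesis $E^s_p=E^u_p$. I would fix $\psi^s_p\in\Psi^s_p$ and $\psi^u_p\in\Psi^u_p$; since both derivatives $(\psi^s_p)'(0)$ and $(\psi^u_p)'(0)$ span the single line $E^s_p=E^u_p$, there is a nonzero scalar $c$ with $(\psi^u_p)'(0)=c\,(\psi^s_p)'(0)$. For each $n$ pick $\psi^{s}_n\in\Psi^{s}_{f^n(p)}$ and $\psi^{u}_n\in\Psi^{u}_{f^n(p)}$ and invoke the functional equations
\[
f^n\circ\psi^{s}_p(\zeta)=\psi^{s}_n(\lambda^{s}_{p,n}\zeta),\qquad f^n\circ\psi^{u}_p(\zeta)=\psi^{u}_n(\lambda^{u}_{p,n}\zeta),
\]
which are available from the canonical family of Section \ref{section:maximal} together with Lemma \ref{BresciaSuni} (for the stable side) and Corollary \ref{boundedlambda} (for the unstable side). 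Differentiating at $\zeta=0$ and using $(\psi^u_p)'(0)=c(\psi^s_p)'(0)$ to eliminate $df^n_p$ gives the identity $\lambda^{u}_{p,n}(\psi^{u}_n)'(0)=c\,\lambda^{s}_{p,n}(\psi^{s}_n)'(0)$. Quasi-hyperbolicity provides $|\lambda^{u}_{p,n}|\geq\kappa^n$ and $|\lambda^{s}_{p,n}|\leq\gamma^n$ with $\gamma<1<\kappa$, and by normality of the canonical family $\Psi^s$ the norms $\Vert(\psi^s_n)'(0)\Vert$ are uniformly bounded above. Taking norms in the identity then yields $\Vert(\psi^{u}_n)'(0)\Vert\leq \mathrm{const}\cdot(\gamma/\kappa)^n$, so the derivatives decay exponentially.

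The computation itself is short; the only subtlety I expect is making sure the functional equation for $\psi^s_p$ really takes this multiplicative form at every forward iterate and that the uniform bound $\Vert(\psi^s_n)'(0)\Vert\leq C$ is legitimate for parametrizations in the canonical family, and not merely for the normal limits $\widehat\Psi$. Both points are addressed by the construction of Section \ref{section:maximal}: the canonical family $\Psi$ is itself normal and $f$-equivariant, and the stable analogues of Lemma \ref{BresciaSuni} and Corollary \ref{boundedlambda} give the quantitative control $|\lambda^s_{p,n}|\leq\gamma^n$ uniformly in $p$. Once this is in place, Lemma \ref{taustrictincrease} applied along any subsequence $f^{n_k}(p)\to q$ closes the argument.
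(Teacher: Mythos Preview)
Your argument is correct. The identity $\lambda^{u}_{p,n}(\psi^{u}_n)'(0)=c\,\lambda^{s}_{p,n}(\psi^{s}_n)'(0)$ obtained by differentiating the two functional equations at $0$ and using the tangency $(\psi^u_p)'(0)=c\,(\psi^s_p)'(0)$ is exactly right; the bounds $|\lambda^u_{p,n}|\ge\kappa^n$, $|\lambda^s_{p,n}|\le\gamma^n$ and $\sup_n\|(\psi^s_n)'(0)\|<\infty$ (the last from normality of the canonical stable family, since all base points lie in the compact set $J^\star$) give $\|(\psi^u_n)'(0)\|\to 0$, and Lemmas \ref{tauincrease} and \ref{taustrictincrease} finish the job.

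As for comparison: the paper does not actually prove this lemma but simply attributes it to Proposition 4.2 of \cite{BSh}. Your route is therefore not a reproduction of the paper's proof but a self-contained derivation staying entirely within the tools already assembled in Sections \ref{section:quasi}--\ref{section:unbridged}. This is a genuine improvement in exposition: it makes transparent exactly how the tangency forces the jump in $\tau^u$, namely by letting the stable contraction rate dominate the unstable expansion rate in the normalized derivative. The cited result in \cite{BSh} presumably argues along similar lines, but your write-up shows the lemma is an immediate consequence of the two order lemmas you already have, with no external input needed.
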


Now we are ready to prove Theorem \ref{boundedlevset}

\begin{proof}[Proof of Theorem \ref{boundedlevset}]
By Lemma \ref{taubound}, there exists a bound on both $\tau^s$ and $\tau^u$. Suppose for the purpose of a contradiction that there exists a stably-bridged point $p\in J^\star$, then there exists a maximal pair $(M^s,M^u)$ such that $J^\star_{M^s,M^u}$ contains a stably-bridged point. Here maximal means that there is no pair $(m^s,m^u)\neq(M^s,M^u)$ with $m^s\geq M^s$ and $m^u\geq M^u$ for which $J^\star_{m^s,m^u}$ contains a stably-bridged point.

A stably-bridged point $p\in J^\star_{M^s,M^u}$ cannot be a point of tangency. Indeed, all the points in $O(p)$ are stably-bridged by Lemma \ref{bridgedorbit}, and by Lemma \ref{tangency} the forward limit set $\omega(p)$ is contained in
$J^\star_{m^s,m^u}$ with $m^s\geq M^s$ and $m^u>M^u$. By the maximality of the couple $(M^s,M^u)$ this situation therefore cannot happen.

By Lemmas \ref{bridgedorbit} and \ref{tauincrease} every point $q\in O(p)$ is stably-bridged and lies in some $J^\star_{m^s,m^u}$ with $m^s\geq M^s$ and $m^u\geq M^u$. By the maximality of the pair $(M^s,M^u)$ it follows that $O(p)\subset J^\star_{M^s,M^u}$. Since stably-bridged points in $J^\star_{M^s,M^u}$ cannot be tangency points, it follows that $E^s_q \neq E^u_q$ for every point $q\in O(p)$.

For every $n \in \mathbb N$ let $\psi_n^s\in \Psi^s_{f^n(p)}$ and $\psi_n^u\in\Psi^u_{f^n(p)}$. By Lemma \ref{taustrictincrease} it follows that there cannot be a subsequence $n_k$ for which $(\psi^s_{n_k})'(0)$ or $(\psi^u_{n_k})'(0)$ converges to zero. It follows that stable and unstable directions are continuous on $O(p)$. In particular for every sequence $q_n\in O(p)$ with $q_n\to q_\infty$ one has $D(q_n)\to D(q_\infty)$. Since there are no tangencies in $O(p)$ there must be a $\gamma>0$ such that $D(q)\geq \gamma$ for every $q\in O(p)$.

But then Lemma \ref{boundedangles} implies that $p$ is stably-unbridged, which gives a contradiction. Therefore every point $p\in J^\star$ is stably-unbridged.
\end{proof}

\section{Proof of the main theorem} \label{section:proof}

In this section we will prove that
\begin{theorem}
\label{maintheorem2}
Let $f$ be a stably-unbridged quasi-contracting H\'enon map. Then $J=J^*$.
\end{theorem}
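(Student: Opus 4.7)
The plan is to argue by contradiction. Suppose $q\in J\setminus J^\star$. The forthcoming description of $J^+$ via unbridged stable manifolds will give $q\in W^s(q_0)$ for some $q_0\in J^\star$. Since $q\notin\mathrm{supp}(\mu^+\wedge\mu^-)$ and $G^+\equiv 0$ along $W^s(q_0)$, a slicing argument for the wedge product (exploiting that $\mu^+$ is locally a sum of currents supported on stable leaves) forces $G^-$ to vanish on an intrinsic open neighborhood $U\subset W^s(q_0)$ of $q$. In the canonical parametrization $\widehat\psi^s_{q_0}\colon\mathbb C\to W^s(q_0)$, the set $U$ pulls back to a connected open set $V_0\subset\mathbb C$ on which $G^-\circ\widehat\psi^s_{q_0}\equiv 0$. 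Stable-unbridgedness of $q_0$ forces $V_0$ to be bounded, so $\partial V_0\ne\emptyset$. A careful choice of boundary point, combined with the density of saddle intersections inside $J^\star\cap W^s(q_0)$ and the inclusion $W^s(q_0)\subset J^+$, yields a point $q_1\in J^\star\cap\overline U$. Establishing the membership $q_1\in J^\star$ (rather than merely $q_1\in J$) is, I expect, the most delicate step.

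Now iterate backwards. Each $f^{-n}(q_1)\in J^\star$ carries a canonical stable parametrization $\widehat\psi_n\in\widehat\Psi^s_{f^{-n}(q_1)}$ with $\widehat\psi_n(0)=f^{-n}(q_1)$. Applying the analogue of Lemma \ref{BresciaSuni} to $f^{-1}$ (which is quasi-expanding in the stable direction) produces constants $\Lambda_n$ with $|\Lambda_n|\ge\kappa^n$ satisfying $f^{-n}\circ\widehat\psi^s_{q_1}(\zeta)=\widehat\psi_n(\Lambda_n\zeta)$. Writing $\widetilde V_0=(\widehat\psi^s_{q_1})^{-1}(U)$ --- which is bounded, connected, and has $0\in\partial\widetilde V_0$ because $q_1\in\overline U\setminus U$ --- we obtain $V_n:=\widehat\psi_n^{-1}(f^{-n}(U))=\Lambda_n\widetilde V_0$: a connected open set with $0\in\partial V_n$ and $\mathrm{diam}(V_n)\to\infty$. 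The functional identity $G^-\circ f^{-n}=d^n G^-$ together with $G^-\equiv 0$ on $U$ gives $G^-\circ\widehat\psi_n\equiv 0$ on $V_n$.

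Finally, by compactness of $J^\star$ and normality of $\widehat\Psi^s$, extract a subsequence along which $f^{-n}(q_1)\to q_\infty\in J^\star$ and $\widehat\psi_n\to\widehat\psi_\infty\in\widehat\Psi^s_{q_\infty}$ locally uniformly on $\mathbb C$. Fix $R>0$ and $\varepsilon>0$. Since $V_n$ is connected with $0$ in its closure and diameter tending to infinity, for $n$ large $V_n$ contains a continuum $\gamma_n\subset\overline{B(0,R)}$ joining a point in $B(0,1/n)$ to $\partial B(0,R)$. Uniform convergence of $G^-\circ\widehat\psi_n$ to $G^-\circ\widehat\psi_\infty$ on $\overline{B(0,R)}$ then gives $G^-\circ\widehat\psi_\infty\le\varepsilon$ on $\gamma_n$. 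A Hausdorff-limit/diagonal argument applied as $R\to\infty$ produces an unbounded connected component of $\{G^-\circ\widehat\psi_\infty<2\varepsilon\}$, which means that $\widehat\psi_\infty$, and hence $q_\infty$, is stably-bridged. This contradicts the hypothesis that $f$ is stably-unbridged, completing the proof.
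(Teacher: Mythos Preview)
Your outline follows the same route as the paper: Lemma~\ref{lemma:5.5} places $q\in W^s(q_0)$, Proposition~\ref{vanishingprop} gives $G^-\equiv 0$ on an intrinsic neighborhood, a boundary point $q_1\in J^\star$ is selected, and backward iteration together with normality of $\widehat\Psi^s$ produces an unbounded sublevel set of $G^-\circ\widehat\psi_\infty$, contradicting unbridgedness. Two corrections on the intermediate justifications: the vanishing of $G^-$ near $q$ is obtained in the paper not by a laminarity/slicing description of $\mu^+$ but via the equidistribution $d^{-n}f^{-n}_*[M_r]\to c\,\mu^+$ combined with Pesin boxes to manufacture transverse saddle intersections accumulating on $q$ (this is the content of Proposition~\ref{vanishingprop}, and is the genuinely delicate step); and the step you flag as most delicate, $q_1\in J^\star$, is in fact immediate once Proposition~\ref{vanishingprop} is in hand --- any boundary point $\zeta_0$ of the vanishing component satisfies $\psi_p(\zeta_0)\in J$ (since $G^-$ vanishes there and $W^s(q_0)\subset J^+$), and if it lay in $J\setminus J^\star$ the proposition would force $G^-\circ\psi_p$ to vanish on a full neighborhood of $\zeta_0$, contradicting $\zeta_0\in\partial U$.
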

Since stably-unbridgness is defined only for quasi-contracting H\'enon maps, from now on we will drop the second term.

Given such a map, we know from Proposition \ref{injectiveparameter} that there exists a stable manifold through every $p\in J^*$. Recall that a priori it is not clear that the unstable set of a given point $q \in J^\star$ is a manifold. Following \cite{BLS}, there exists however a dense subset $\mathcal R\subset J^\star$, the set of regular points, such that through every point $q\in\mathcal R$ passes an unstable manifold $W^u(q)$.

\begin{lemma}
\label{intersectionstableunstable}
Let $f$ be a quasi-contracting map. Suppose that, given $p\in J^\star$ and $q\in\mathcal R$, the manifolds $W^s(p)$ and $W^u(q)$ intersect transversally in a point $x$. Then $x\in J^\star$.
\end{lemma}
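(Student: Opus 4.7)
The strategy is to realize $x$ as a limit of points already known to lie in $J^\star$ and then invoke that $J^\star$ is closed. Since $p \in J^\star$, by Theorem \ref{closuresaddle} we may choose a sequence of saddle periodic points $p_n \to p$. The Stable Manifold Theorem equips each $p_n$ with an injective entire parametrization of $W^s(p_n)$, which after the normalization \eqref{normalizationprop} lies in $\Psi^s_{p_n}$; call it $\sigma_n$.

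Because $f$ is quasi-contracting, Proposition \ref{normpropnormfam} applied to $f^{-1}$ ensures that the family $\widehat\Psi^s$ is compact. Extracting a subsequence and possibly composing each $\sigma_n$ with a rotation, I may assume $\sigma_n \to \widehat\sigma \in \widehat\Psi^s_p$ locally uniformly on $\mathbb C$, so that $\widehat\sigma(\mathbb C) = W^s(p)$. Let $\zeta_0 \in \mathbb C$ satisfy $\widehat\sigma(\zeta_0) = x$. Transversality of $W^s(p)$ and $W^u(q)$ at $x$ lets me view $W^u(q)$ locally near $x$ as a holomorphic graph in suitable coordinates; the implicit function theorem applied to the equation ``$\sigma_n(\zeta)$ lies on this graph'' produces $\zeta_n \to \zeta_0$ with $x_n := \sigma_n(\zeta_n) \in W^s(p_n) \cap W^u(q)$, and for $n$ large the intersection at $x_n$ is again transverse. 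In particular $x_n \to x$.

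It remains to show each $x_n \in J^\star$. Here $x_n$ is a transverse intersection of the stable manifold of the saddle $p_n$ with the unstable manifold $W^u(q)$ of a regular $J^\star$-point. I would handle this by a second approximation on the $q$-side: for each $n$, approximate a small neighborhood of $x_n$ in $W^u(q)$ by Pesin unstable discs at nearby saddles $q_m \to q$ (as in the regularity theory of \cite{BLS}), producing transverse intersections $x_{n,m} \in W^s(p_n) \cap W^u(q_m)$ with $x_{n,m} \to x_n$. Each $x_{n,m}$ is a transverse heteroclinic intersection between two saddle points; by the Smale--Birkhoff horseshoe theorem such points accumulate saddle orbits in every neighborhood, and by the characterization of $J^\star$ as the closure of saddle points (Theorem \ref{closuresaddle}) they lie in $J^\star$. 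Closedness of $J^\star$ then yields $x_n \in J^\star$, and a further limit gives $x = \lim x_n \in J^\star$.

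The main technical point is the persistence of the transverse intersection under the normal limit $\sigma_n \to \widehat\sigma$, which is clean once the compactness of $\widehat\Psi^s$ from Proposition \ref{normpropnormfam} is in place. The subtler issue is the second approximation: since $f$ is only assumed quasi-contracting and not quasi-expanding, the convergence $W^u(q_m) \to W^u(q)$ near $x_n$ is not delivered by a normal family of unstable parametrizations, and one must instead rely on the local (Pesin-theoretic or laminar) description of the unstable manifold at the regular point $q$ supplied by \cite{BLS}.
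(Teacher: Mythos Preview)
Your first half is exactly the paper's argument: approximate $p$ by saddles $p_n$, use quasi-contraction to get convergence $\sigma_n\to\widehat\sigma$ of stable parametrizations, and let transversality produce intersections $x_n\in W^s(p_n)\cap W^u(q)$ with $x_n\to x$. The paper then finishes in one line by citing Theorem~9.9 of \cite{BLS}, which already asserts that a transverse intersection of the stable manifold of a \emph{saddle} with the unstable manifold of a \emph{regular} point lies in $J^\star$; closedness of $J^\star$ gives $x\in J^\star$.

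Your proposal differs only in that you attempt to reprove this cited result via a second approximation on the $q$-side. That is a legitimate route, but as you yourself note, it is the genuinely delicate step: with only quasi-contraction you have no normal family of unstable parametrizations, so the convergence $W^u(q_m)\to W^u(q)$ near $x_n$ must come from Pesin theory or the laminar structure of $\mu^-$ in \cite{BLS}, and your sketch does not pin down which statement you are invoking or why it applies uniformly enough to preserve transversality. Since Theorem~9.9 of \cite{BLS} packages exactly this argument, the cleaner and safer move is to cite it directly, as the paper does; your detour is not wrong in spirit, but it re-opens work that \cite{BLS} has already closed.
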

\begin{proof}
By Theorem \ref{closuresaddle}, there exist sequences of saddle points $p_n$ converging to $p$. For $n\in \mathbb N$ let $\psi_{p_n}^s\in \Psi_{p_n}^s$. By quasi-contraction we can take a subsequence if necessary so that $\psi_{p_n}^s\to\widehat\psi_p^s\in\widehat\Psi_p^s$ locally uniformly on $\mathbb C$.

By Proposition \ref{injectiveparameter}, $W^s(p)=\widehat\psi^s_p(\mathbb C)$. By the transversality of the intersection between $W^s(p)$ and $W^u(q)$ at $x$ it follows that $W^s(p_n)$ and $W^u(q)$ intersect at points $x_n$, with $x_n \rightarrow x$. By Theorem 9.9 of \cite{BLS} $x_n\in J^\star$, hence $x\in J^\star$.
\end{proof}

Let $\mu^-$ be the $(1,1)$-current defined as in \eqref{currents}. For a submanifold $M$ we write  $\mu^-_{|M}$ for the measure on $M$ induced by the positive distribution
\[
\mu^-_{|M}(\varphi)=\int_MG^-dd^c\varphi,\qquad \varphi\in C^\infty_0(M).
\]

We recall the following from \cite{BS2}.

\begin{theorem}
Let $f$ be a H\'enon map and suppose that $M$ is a compact subset of $W^s(p)$ with $p\in J^\star$. Suppose further that $\mu^-_{|W^s(p)}(\partial M)=0$. Then
\[
\lim_{n\to\infty}d^{-n}f^{-n}_*([M])=\mu^-_{|W^s(p)}(M)\mu^+,
\]
where $d=\deg(f)$, $f^{-n}_*([M])$ denotes the current of integration over $f^n(M)$. The convergence holds in the sense of currents.
\end{theorem}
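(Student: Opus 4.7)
The plan is to establish this classical equidistribution result in three main steps: a uniform mass bound that yields weak-$\ast$ compactness of the sequence $T_n := d^{-n} f^{-n}_*[M]$, identification of any weak-$\ast$ limit as a scalar multiple of $\mu^+$ via a functional equation, and computation of the normalizing constant by pairing with $\mu^-$. For the mass bound, I would use the filtration together with $M \subset W^s(p) \subset K^+$ to keep the iterates in a controlled region of $\mathbb{C}^2$, and then apply the classical Fornaess--Sibony type estimate showing that $\langle f^{-n}_*[M], \omega \rangle \leq C\, d^n$ for the standard K\"ahler form $\omega$ on a large bidisk. Dividing by $d^n$ yields the uniform bound, and Banach--Alaoglu produces subsequential weak-$\ast$ limits.

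For the identification step, I would exploit the functional identity $f^* T_n = d\, T_{n-1}$, which follows from the way pullback and pushforward interact for the biholomorphism $f$ together with $G^+ \circ f = d\, G^+$. A diagonalization argument along a subsequence in which both $T_n$ and $T_{n-1}$ converge to the same limit $T_\infty$ (using the uniform mass bound) then yields $f^* T_\infty = d\, T_\infty$. Since $T_n$ is supported in $K^+$, the functional equation together with the uniform bound forces the limit's support into $\partial K^+ = J^+$, as no positive closed current satisfying this equation can carry mass in the $f$-invariant interior. By the Bedford--Smillie uniqueness result for positive closed $(1,1)$-currents supported in $J^+$ and satisfying $f^* T = d\, T$, we conclude $T_\infty = c\, \mu^+$ for some $c \geq 0$.

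To determine $c$, I would pair both sides with $\mu^-$. The normalization $\int \mu^+ \wedge \mu^- = 1$ yields $\langle T_\infty, \mu^- \rangle = c$. For the $n$-th term, $\langle T_n, \mu^- \rangle$ is $d^{-n}$ times the $\mu^-$-mass of the iterate of $M$, restricted to the relevant stable manifold. Using $G^- \circ f^{-1} = d\, G^-$ restricted to stable manifolds, the slicing measure $\mu^-_{|W^s(\cdot)}$ transforms by a factor of $d$ under $f^{-1}$, so iterating gives exactly $d^n$ and hence $\langle T_n, \mu^- \rangle = \mu^-_{|W^s(p)}(M)$ for every $n$. The boundary hypothesis $\mu^-_{|W^s(p)}(\partial M) = 0$ is precisely what guarantees that this pairing is continuous under the weak-$\ast$ convergence (no concentration of mass on $\partial M$ in the limit), so $c = \mu^-_{|W^s(p)}(M)$, completing the proof.

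The main obstacle is making the wedge product $T_n \wedge \mu^-$ rigorous: $\mu^-$ is a singular $(1,1)$-current, and wedge products of positive closed currents are generally not well-defined. Here, however, $T_n$ is integration along a smooth holomorphic curve and $G^-$ restricts to a well-defined subharmonic function on that curve, so $\mu^-_{|W^s(f^{-n}(p))}$ is a genuine Radon measure and the pairing is well-defined; the boundary hypothesis is exactly what is needed to pass to the weak-$\ast$ limit. A secondary technical point is the Bedford--Smillie uniqueness step, which relies on fine potential-theoretic properties of $G^+$ to rule out additional positive closed currents satisfying the functional equation.
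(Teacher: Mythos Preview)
The paper does not prove this theorem at all: it is quoted verbatim as a result ``recalled from \cite{BS2}'' and used as a black box in the proof of Proposition~\ref{vanishingprop}. There is therefore no paper proof to compare against.

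Your outline is the standard modern route to such equidistribution statements (mass bound $\Rightarrow$ compactness, functional equation for limits, uniqueness of $\mu^+$, identification of the constant), and it is essentially how the result is obtained in the Bedford--Smillie and Forn\ae ss--Sibony work. Two small corrections are worth noting. First, the functional equation goes the other way: from $T_n = d^{-n}(f^{-n})_*[M]$ one gets $f^*T_n = d\,T_{n+1}$, not $d\,T_{n-1}$; this is harmless for the argument, since any weak limit still satisfies $f^*T_\infty = d\,T_\infty$. Second, the step where you pass to the limit in $\langle T_n,\mu^-\rangle$ is genuinely delicate: weak convergence of currents does not in general commute with wedging against another current. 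The usual fix is that $\mu^- = dd^c G^-$ has a \emph{continuous} potential, so for a test function $\chi$ one writes $\langle T_n\wedge\mu^-,\chi\rangle = \langle T_n, G^-\,dd^c\chi\rangle + \text{(boundary terms)}$ and uses continuity of $G^-$ to pass to the limit; the hypothesis $\mu^-_{|W^s(p)}(\partial M)=0$ is what kills the boundary contribution. With these adjustments your sketch is a correct outline of the Bedford--Smillie argument.
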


\begin{proposition}
\label{vanishingprop}
Let $f$ be a stably-unbridged H\'enon map. Suppose that $J^\star\neq J$ and that $q\in J\setminus J^\star$ lies in $W^s(p)$ for some $p\in J^\star$. Let $\widehat\psi_p\in\widehat\Psi_p^s$ and let $\omega$ such that $q=\widehat\psi_p(\omega)$. Then $G^-\circ\widehat\psi_p$ vanishes on a open neighborhood of $\omega$.
\end{proposition}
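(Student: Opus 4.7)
Set $g := G^- \circ \widehat\psi_p$, a nonnegative subharmonic function on $\mathbb C$. Because $q \in J \subset K^-$, one has $g(\omega) = G^-(q) = 0$. Since $G^-$ is pluriharmonic on $\mathbb C^2 \setminus K^-$, the function $g$ is harmonic on $\{g > 0\}$, and trivially on the interior of $\{g = 0\}$; hence $\mathrm{supp}(dd^c g) \subset \partial\{g > 0\}$, and if $dd^c g$ vanishes on a disk around $\omega$, then $g$ is harmonic there and equals $0$ identically by the minimum principle applied to a nonnegative harmonic function attaining the value $0$. Under $\widehat\psi_p$ this Riesz measure corresponds to the slice $\mu^-|_{W^s(p)}$, so the proposition reduces to showing that $\mu^-|_{W^s(p)}$ carries no mass on some intrinsic neighborhood of $q$ in $W^s(p)$.

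Suppose for contradiction that every intrinsic neighborhood of $q$ has positive $\mu^-|_{W^s(p)}$-mass. Fix an open $V \subset \mathbb C^2$ with $q \in V$ and $\overline V \cap J^\star = \emptyset$ (possible since $q \notin J^\star$) and pick $r > 0$ small enough that $M := \widehat\psi_p(\overline{\Delta_r(\omega)}) \subset V$, with $r$ chosen outside the countable set of discontinuities of $r \mapsto \mu^-|_{W^s(p)}(M_r)$ so that $\mu^-|_{W^s(p)}(\partial M) = 0$. By hypothesis $c := \mu^-|_{W^s(p)}(M) > 0$. The Bedford--Smillie theorem immediately preceding then yields
\[
d^{-n}[f^{-n}(M)] \longrightarrow c\, \mu^+ \qquad \text{as $(1,1)$-currents on $\mathbb C^2$.}
\]
Since $G^-$ is continuous and the current masses are uniformly locally bounded, the Bedford--Taylor continuity of intersection lets one take the wedge with $\mu^- = dd^c G^-$ through the limit:
\[
d^{-n}\mu^-|_{f^{-n}(M)} \;=\; d^{-n}[f^{-n}(M)] \wedge \mu^- \longrightarrow c\,\mu^+\wedge \mu^- \;=\; c\,\mu
\]
as positive measures, both sides of total mass $c$ (using the transformation rule $(f^{-1})^*\mu^- = d\,\mu^-$).

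The contradiction now arises from the concentration of the approximating measures on vanishingly small sets. By Lemma \ref{BresciaSuni}, $f^{-n}\circ\widehat\psi_p(\zeta) = \widehat\psi_{f^{-n}(p)}(\lambda_{p,-n}\zeta)$ with $|\lambda_{p,-n}| \le \gamma^n \to 0$, and normality of the canonical stable family (from quasi-contraction) yields a uniform bound on $|\widehat\psi'_{f^{-n}(p)}|$ on a fixed disk around $0$. Consequently $\mathrm{diam}_{\mathbb C^2}(f^{-n}(M)) = O(\gamma^n) \to 0$, so each set $f^{-n}(M)$ clusters around the single point $f^{-n}(q) \in J \setminus J^\star$. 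For any $\varphi \in C_c(\mathbb C^2)$, uniform continuity gives
\[
d^{-n}\mu^-|_{f^{-n}(M)}(\varphi) \;=\; c\,\varphi(f^{-n}(q)) + o(1),
\]
and combined with the weak convergence to $c\mu$ this forces $\varphi(f^{-n}(q)) \to \mu(\varphi)$ for every $\varphi$, i.e.\ $\delta_{f^{-n}(q)} \to \mu$ weakly. But a weak limit of Dirac masses supported in a compact set must itself be a Dirac mass, since any two subsequential limits of $(f^{-n}(q))$ would give equal Dirac measures; whereas $\mu$ is the non-atomic measure of maximal entropy. This is the sought contradiction, so $c = 0$, $\mu^-|_{W^s(p)}$ has no mass on an intrinsic neighborhood of $q$, and $g \equiv 0$ in a neighborhood of $\omega$. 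The most delicate step in this plan is the invocation of Bedford--Taylor continuity to commute the wedge with $\mu^-$ through the convergence of currents; the rest of the argument follows once the uniform shrinkage of the supports $f^{-n}(M)$ is extracted from the functional equation for stable parametrizations.
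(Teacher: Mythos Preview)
Your argument contains a fatal sign error in the direction of contraction. You invoke Lemma~\ref{BresciaSuni} to write $f^{-n}\circ\widehat\psi_p(\zeta)=\widehat\psi_{f^{-n}(p)}(\lambda_{p,-n}\zeta)$ with $|\lambda_{p,-n}|\le\gamma^n$, but that lemma is stated for a quasi-\emph{expanding} map applied to its \emph{unstable} parametrizations. Here $\widehat\psi_p\in\widehat\Psi^s_p$ is a \emph{stable} parametrization and $f$ is quasi-\emph{contracting}, i.e.\ $f^{-1}$ is quasi-expanding with respect to $\Psi^s$. Translating the lemma correctly gives $f\circ\widehat\psi_p^s(\zeta)=\widehat\psi_{f(p)}^s(\lambda\zeta)$ with $|\lambda|\le\gamma$; equivalently, under backward iteration the stable parameter \emph{expands}: $|\lambda_{p,-n}|\ge\kappa^n\to\infty$. (This is exactly what the paper uses a few lines later in the proof of Theorem~\ref{maintheorem2}.) Consequently $f^{-n}(M)$ does not shrink to the point $f^{-n}(q)$; on the contrary, in the intrinsic parameter it becomes a disk of radius $\asymp\kappa^n r$, and its intersection with $J^-$ spreads out over $J^\star$ rather than concentrating. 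The Dirac-mass conclusion $\delta_{f^{-n}(q)}\to\mu$ therefore has no basis, and the contradiction collapses.

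The paper's proof starts from the same current convergence $d^{-k}f^{-k}_*[M_{r_n}]\to c_n\mu^+$, but instead of wedging with $\mu^-$ it exploits this \emph{spreading} of $f^{-k}(M_{r_n})$: following Lemma~9.1 of \cite{BLS}, for each $n$ some iterate $f^{-N_n}(M_{r_n})$ meets a local unstable manifold $W^u_r(x_n)$ inside a fixed Pesin box transversally at a point $y_n$. Lemma~\ref{intersectionstableunstable} then gives $y_n\in J^\star$, hence $q_n:=f^{N_n}(y_n)\in M_{r_n}\cap J^\star$; letting $r_n\to 0$ forces $q_n\to q\in J^\star$, the desired contradiction. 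So the geometric mechanism is the opposite of the one you propose: backward iterates of the stable slice become large and hit many unstable manifolds, producing $J^\star$-points arbitrarily close to $q$.
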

\begin{proof}
Let $\psi_p\in \Psi_p^s$ be an injective parametrization of $W^s(p)$ and let $g_p=G^-\circ\psi_p$.

Let us first assume that $g_p$ is not harmonic on any disk $\Delta_r(\omega)$, from which it follows that $0<\mu^-_{|W^s(p)}(M_r)<\infty$, where $M_r=\psi_p(\Delta_r(\omega))$.

Fix $r_0<\infty$ and let $E_n:=\{0<r\leq r_0 \mid \,\mu^-_{|W^s(p)}(\partial M_r)\geq 1/n\}$. For every $n$ the set $E_n$ is finite, therefore $E_\infty=\bigcup_{n\geq 0}E_n$ is at most a countable set. It follows that there exists a sequence $r_n\to 0$ such that, for every $n$
\[
\mu^-_{|W^s(p)}(M_{r_n})>0, \; \; \mathrm{and} \; \; \mu^-_{|W^s(p)}(\partial M_{r_n})=0.
\]
By the previous theorem we have
\[
\lim_{k\to\infty}d^{-k}f^{-k}_*([M_{r_n}])=\mu^-_{|W^s}(M_{r_n})\mu^+.
\]
Following the steps of the proof of Lemma 9.1 in \cite{BLS}, we find that, given a Pesin box P  there exists a natural number $N_n$ and $x_n\in P$ such that $f^{-N_n}(M_{r_n})$ intersects $W^u_r(x_n)$ transversally at a point $y_n$. See chapter 4 of \cite{BLS} for more details on Pesin boxes.

It is clear that $y_n\in W^s(f^{-N_n}(p))\cap W^u(x_n)$ and since the intersection is transverse, by Theorem \ref{intersectionstableunstable}, it follows that $y_n\in J^\star$.
The point $q_n=f^{N_n}(y_n)$ belongs to $M_{r_n}\cap J^\star$ and $q_n\to q$. By the compactness of $J^\star$ it follows that $q\in J^\star$, which contradicts our hypothesis.

Therefore there exists an $r>0$ for which the function $g_p$ is harmonic on $\Delta_r(\omega)$. By the maximum principle $g_p$ vanishes on $\Delta_r(\omega)$. Given $\widehat\psi_p\in\widehat \Psi_p^s$, there exists a polynomial $h_p\in\mathcal H$ such that $\widehat\psi_p=\psi_p\circ h_p$. If $U$ is an open neighborhood of $\omega$ such that $G^-\circ\psi_p$ vanishes on $U$, then $G^-\circ\widehat\psi_p$ vanishes on $h_p^{-1}(U)$, an open set containing all the points $\omega$ for which $\widehat\psi_p(\omega)=q$, completing the proof.
\end{proof}

Let $R>0$ big enough for which we have the filtration $\Delta_R^2,V^+, V^-$ described in section \ref{section:quasi}.
\begin{lemma}
\label{lemma:5.5}
Let $f$ be a stably unbridged H\'enon map. Then
$$
J^+ \cap \Delta^2_R = \bigcup_{p \in J^\star} W^s_R(p).
$$
\end{lemma}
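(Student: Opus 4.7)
I would split the proof into the two set inclusions.

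For the inclusion $\bigcup_{p \in J^\star} W^s_R(p) \subset J^+ \cap \Delta^2_R$: given $p \in J^\star$ and any $\widehat\psi_p \in \widehat\Psi^s_p$, the image $\widehat\psi_p(\mathbb{C})$ is contained in $J^+$ by the stable analogue of Proposition \ref{injectiveparameter} ($\widehat\psi_p$ is a normal limit of parametrizations of stable manifolds of saddle periodic points in $J^\star$, and $J^+$ is closed). Since $W^s_R(p) \subset W^s(p) \cap \Delta^2_R$, this gives the easy inclusion.

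For $J^+ \cap \Delta^2_R \subset \bigcup_{p \in J^\star} W^s_R(p)$, set $A := \bigcup_{p \in J^\star} W^s_R(p)$. I would prove that $A$ is both closed in $\Delta^2_R$ and dense in $J^+ \cap \Delta^2_R$, which together yield $A = J^+ \cap \Delta^2_R$. For closedness, let $x_k \in A$ with $x_k \to x \in \Delta^2_R$, where $x_k \in W^s_R(p_k)$ and $p_k \in J^\star$. By compactness of $J^\star$, along a subsequence $p_k \to p \in J^\star$. Choose $\psi_k \in \Psi^s_{p_k}$ with $\psi_k(0) = p_k$ and write $x_k = \psi_k(\zeta_k)$ with $\zeta_k$ in the component of $\psi_k^{-1}(\Delta^2_R)$ containing $0$. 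By Proposition \ref{prop:degreebound} the degree of $W^s_R(p_k)$ is uniformly bounded, and combined with stable-unbridgedness this forces $\zeta_k$ to lie in a uniformly bounded subset of $\mathbb{C}$. By normality of $\widehat\Psi^s$ (Theorem \ref{bignormalfamily}) and a further subsequence, $\psi_k \to \widehat\psi_p \in \widehat\Psi^s_p$ and $\zeta_k \to \zeta_\infty$, so $x = \widehat\psi_p(\zeta_\infty)$ lies in the component of $W^s(p) \cap \Delta^2_R$ through $p$, namely $W^s_R(p) \subset A$.

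For density of $A$ in $J^+ \cap \Delta^2_R$: by the Bedford--Smillie equidistribution theorem stated just before Proposition \ref{vanishingprop}, for any saddle periodic $p_0 \in J^\star$ and suitable compact $M \subset W^s(p_0)$ the normalized currents associated to the iterates of $M$ converge to a positive multiple of $\mu^+$, whose support is $J^+$. Since these iterates are contained in $\bigcup_i W^s(p_i)$ for the finite orbit $\{p_i\}$ of $p_0$ in $J^\star$, every $q \in J^+ \cap \Delta^2_R$ is a limit of points on stable manifolds of $J^\star$-points. The final step, upgrading density in $W^s(J^\star)$ to density in $A$, requires showing that every connected component of $W^s(p_i) \cap \Delta^2_R$ meeting $J^+$ contains a point of $J^\star$; this uses stable-unbridgedness through Proposition \ref{prop:degreebound}, combined with the invariance of the component structure under $f^{\pm 1}$ and the density of saddle periodic points in $J^\star$ (Theorem \ref{closuresaddle}).

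\textbf{Main obstacle.} The density step is the delicate part, since a priori a $\Delta^2_R$-component of some $W^s(p)$ with $p \in J^\star$ could fail to meet $J^\star$ and thus fail to lie in $A$. Stable-unbridgedness is precisely the hypothesis that rules out this pathology: the uniform degree bound of Proposition \ref{prop:degreebound}, together with the boundedness of each component of $\{G^- \circ \widehat\psi < R\}$, provides the rigidity needed to ensure that $J^\star$ intersects every relevant component. This is also where the hypothesis of stable-unbridgedness is essentially used (in contrast to the closedness step, which uses only quasi-contraction and the degree bound already present in the unbridged setting).
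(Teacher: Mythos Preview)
Your overall architecture matches the paper's: the easy inclusion is immediate, and for the hard one both you and the paper approximate an arbitrary $q\in J^+\cap\Delta^2_R$ by points on the stable manifold of a saddle, and then pass to a limit using the uniform degree bound of Proposition~\ref{prop:degreebound}. Your closedness paragraph is essentially the paper's final paragraph.

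The genuine gap is in what you yourself flag as the ``main obstacle'': proving that every connected component $U$ of $W^s(p)\cap\Delta^2_r$ already contains a point of $J^\star$. You list as ingredients the degree bound, invariance of the component structure under $f^{\pm1}$, and density of saddle points in $J^\star$, but these do not assemble into a proof. Note that ``meeting $J^+$'' is vacuous, since $U\subset W^s(p)\subset J^+$ automatically; and density of saddles in $J^\star$ tells you nothing about $U$ until you already know $U\cap J^\star\neq\emptyset$, which is precisely the claim.

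The paper supplies the missing argument in two steps you do not mention. First, a nested-intersection argument shows $U\cap J\neq\emptyset$: since $U$ is a degree-$k$ branched cover of the vertical $r$-disk, the sets $V_n:=f^n(U)\cap\Delta^2_r$ are finite unions of proper disks of total degree $d^nk$, and $U_n:=f^{-n}(V_n)$ form a decreasing sequence of nonempty compacta in $U$ whose intersection lies in $K^-=J^-$, hence in $J$. Second, Proposition~\ref{vanishingprop} upgrades this to $U\cap J^\star\neq\emptyset$: if every point of $U\cap J$ lay in $J\setminus J^\star$, that proposition forces $G^-\circ\psi_p$ to vanish on an open neighbourhood of each corresponding parameter, and an open--closed argument on the connected set $\psi_p^{-1}(U)$ yields a contradiction. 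This is where the equidistribution theorem is actually consumed (inside the proof of Proposition~\ref{vanishingprop}, via Pesin boxes and transverse intersections), rather than for the density of $W^s(p)$ in $J^+$, which is a more elementary fact that the paper simply quotes.
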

\begin{proof}
Since each stable manifold is contained in $J^+$ the inclusion ``$\supset$'' is immediate.

Let $r<R$ for which the filtration is still valid.
Let $p \in J^\star$ and consider the intersection $W^s(p) \cap \Delta^2_r$.  Any connected component $U$ of $W^s(p) \cap \Delta^2_r$ is a properly embedded holomorphic disk, with boundary contained in $|y| = r$. We can regard $U$ as a branched cover over the vertical disk $\{|y| < r\}$ of some finite degree $k$. It follows that $V_n = f^n(U) \cap \Delta^2_r$ is a finite union of properly embedded holomorphic disks, for which the sum of the degrees equals $d^n \cdot k$. Writing $U_n = f^{-n} V_n$ we obtain a nested sequence $U \supset U_1 \supset \cdots$, whose intersection is non-empty. Thus $U$ intersects $K^- = J^-$, and hence also $J$. This intersection $U \cap J$ is relatively compact in $J$, thus by Proposition \ref{vanishingprop} $U$ also intersects $J^\star$, and thus $U = W^s_r(p_0)$ for some point $p_0 \in J^\star$.

Let $q \in J^+ \cap \Delta^2_{R}$ and let $r<R$ as above be such that $q\in \Delta^2_r$ as well. Recall that for a saddle point $p$ the stable manifold $W^s(p)$ is dense in $J^+$. In particular $W^s(p)$ accumulates at the point $q$. Let $q_n \in W^s(p)\cap \Delta^2_{r}$ be a sequence converging to $q$. For each $n$ let $p_n\in J^\star$ be such that $q_n \in W^s_{r}(p_n)$. By restricting to a subsequence we may assume that $p_n \rightarrow p_\infty \in J^\star \cap \Delta^2_r$. By Proposition \ref{prop:degreebound} the degrees of the branched coverings are uniformly bounded, from which it follows that $q \in W^s_R(p_\infty)$, which completes the proof.
\end{proof}

\begin{remark}\label{dujardin}
In \cite{D} the equality $J = J^\star$ is proved under the assumption that the current $\mu^+|_{\Delta^2_R}$ has \emph{no degree growth}. By the above lemma plus Proposition \ref{prop:degreebound} one can conclude that the no degree growth condition is satisfied for stably unbridged H\'enon maps. In what follows we give a self-contained proof of $J = J^\star$, following ideas from \cite{F}.
\end{remark}

\begin{lemma}
Let $f$ be a stably-unbridged H\'enon map, and suppose further that $J\neq J^\star$. Then there exists $\psi\in\Psi^s$  and a open set $U_0\subset \mathbb C$ with $0\in\partial U_0$ such that
\[
G^-\circ \psi(\zeta)=0,\qquad \forall \zeta\in U_0.
\]
\end{lemma}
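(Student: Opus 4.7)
The strategy is to use the previous lemma to locate a vanishing open set of $G^-$ on some stable manifold, and then to recenter and rescale a parametrization so that $0$ lands on the boundary of that vanishing set.

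I would begin by fixing a point $q\in J\setminus J^\star$. Since $J\subset K\subset \Delta_R^2$, Lemma~\ref{lemma:5.5} gives $p_0\in J^\star$ with $q\in W^s(p_0)$. Let $\psi_0\in\Psi^s_{p_0}$ be an injective parametrization and $\omega\in\mathbb{C}$ satisfy $\psi_0(\omega)=q$. Proposition~\ref{vanishingprop} provides an open neighborhood of $\omega$ on which $G^-\circ\psi_0$ vanishes. Let $V$ be the connected component of $\mathrm{int}\{G^-\circ\psi_0=0\}$ containing $\omega$. Stable-unbridgedness forces $V$ to be bounded (it sits inside the bounded component of $\{G^-\circ\psi_0<1\}$ containing $\omega$), so $\partial V$ is a non-empty compact subset of $\mathbb{C}$.

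The key step is to pick any $\omega_1\in\partial V$ and argue that $p_1:=\psi_0(\omega_1)$ lies in $J^\star$. First, $\omega_1$ cannot belong to the interior of $\{G^-\circ\psi_0=0\}$: otherwise the connected component of this interior containing $\omega_1$ would intersect $V$ (since $\omega_1\in\overline{V}$) and would therefore coincide with $V$, forcing the contradiction $\omega_1\in V$. Hence every neighborhood of $\omega_1$ contains points where $G^-\circ\psi_0>0$, so $\psi_0(\omega_1)$ lies in the closure of $\mathbb{C}^2\setminus K^-$; combined with $G^-(p_1)=0$ this yields $p_1\in\partial K^-=J^-$. Since $\psi_0(\mathbb{C})\subset W^s(p_0)\subset J^+$ we obtain $p_1\in J$, and if we had $p_1\in J\setminus J^\star$, Proposition~\ref{vanishingprop} applied at $\omega_1$ would put $\omega_1$ in the interior of the zero set, a contradiction. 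Thus $p_1\in J^\star$.

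It remains to re-center and rescale. By Proposition~\ref{normpropnormfam}, $W^s(p_1)=W^s(p_0)$. Set $\tilde\psi(\zeta):=\psi_0(\omega_1+\alpha\zeta)$ with $|\alpha|>0$ chosen so that $\max_{|\zeta|\le 1}G^-\circ\tilde\psi(\zeta)=1$; such an $\alpha$ exists by continuity, since this maximum equals $G^-(p_1)=0$ as $\alpha\to 0$ and tends to $+\infty$ as $|\alpha|\to\infty$ (a non-constant non-negative subharmonic function on $\mathbb{C}$ is unbounded). Then $\tilde\psi\in\Psi^s_{p_1}\subset\Psi^s$, and $U_0:=\alpha^{-1}(V-\omega_1)$ is an open set on which $G^-\circ\tilde\psi\equiv 0$ with $0\in\partial U_0$, as required. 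The delicate part is the topological bookkeeping of the third paragraph, which relies on stable-unbridgedness (to ensure $V$ is bounded and $\partial V$ non-empty) together with two applications of Proposition~\ref{vanishingprop} (to upgrade the boundary point from $J$ to $J^\star$); once this is in place the rescaling step is routine.
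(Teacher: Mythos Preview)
Your proof is correct and follows essentially the same route as the paper's: locate $q\in W^s(p_0)$ via Lemma~\ref{lemma:5.5}, take the component of the zero set of $G^-\circ\psi_0$ containing the preimage of $q$, and recenter at a boundary point $\omega_1$, whose image must lie in $J^\star$ by Proposition~\ref{vanishingprop}. You spell out more carefully than the paper why $\psi_0(\omega_1)\in J^-$ and why the recentered, rescaled map lies in $\Psi^s$; note also that mere non-triviality (from the normalization $\max_{|\zeta|\le 1}G^-\circ\psi_0=1$) already gives $V\neq\mathbb C$ and hence $\partial V\neq\emptyset$, so invoking stable-unbridgedness for the boundedness of $V$ is not strictly needed at this step.
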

\begin{proof}
Let $q\in J\setminus J^\star$. By the previous lemma there exists $p\in J^\star$ such that $q\in W^s(p)$. Take $\psi_p\in\Psi_p^s$ and let $\omega\in \mathbb C$ such that $q=\psi_p(\omega)$. By Proposition \ref{vanishingprop} we know that $g_p=G^-\circ\psi_p$ vanishes on some open neighborhood containing $\omega$.

Let $U$ be the connected component  of $\{g_p=0\}$ containing $\omega$. Then $U\neq\mathbb C$, and for every $\zeta_0\in\partial U$ we have $\tilde p:=\psi_p(\zeta_0)\in J^\star$. The statement of the lemma is satisfied for $\psi_{\tilde p}\in\Psi_{\tilde p}^s$.
\end{proof}

Now we are ready to prove that $J = J^\star$ for stably-unbridged quasi-contracting H\'enon maps.

\begin{proof}[Proof of Theorem \ref{maintheorem2}]
Suppose on the contrary that $J\neq J^\star$. By the Lemma above there exists $\psi_0\in\Psi^s$ such that $G^-\circ \psi_0$ vanishes in an open connected set $U_0$ with $0\in\partial U_0$.

Let $p_0=\psi_0(0)$ and let $p_n=f^{-n}(p_0)$. For every $n$, choose $\psi_n\in\Psi_{p_n}^s$ and let $g_n=G^-\circ \psi_n$. By quasi-contraction, there exist $\kappa>1$ and $\lambda_{p,-n}$ with $|\lambda_{p,-n}|\geq\kappa^n$ such that
\[
\psi_n(\lambda_{p,-n}\zeta)=f^{-n}\circ\psi_0(\zeta).
\]
By the properties of the Green's function $G^-$ we get that
\[
g_n(z)=d^ng_0\left(\frac{z}{\lambda_{p,-n}}\right).
\]
It follows that $g_n=0$ on $U_n=\lambda_{p,-n} \cdot U_0$.

By quasi-contraction, there exists a subsequence $n_k$ such that $\psi_{n_k}\to\widehat\psi_{\infty}\in\widehat\Psi^s$ locally uniformly on $\mathbb C$. Moreover we also have that $g_{n_k}\to g_\infty=G^-\circ \widehat\psi_\infty$ locally uniformly on $\mathbb C$. Let fix $\varepsilon>0$, then for every $m\in \mathbb N$ there exists $k_m$ for which
\begin{equation}
\label{dioboia}
\sup_{\zeta\in\Delta_m}\Vert g_{n_k}(\zeta)-g_{\infty}(\zeta)\Vert\leq \varepsilon,\qquad \forall k\geq k_m.
\end{equation}
We define
\[
V_m:=U_{n_{k_m}}\cap \Delta_m
\]
and write $\tilde \psi_m=\psi_{n_{k_m}}$ and $\tilde g_m=g_{n_{k_m}}$.

Since $\tilde g_m=0$ on $V_m$, then by \eqref{dioboia} it follows that $g_{\infty}\leq \varepsilon$ on $V_m$. Moreover $\mathrm{diam}(V_m)\to \infty$ as $m\to\infty$.

Since $g_\infty(0)=0$ and $g_\infty$ is continuous, there exists a disk $\Delta_\delta$, centered at the origin, such that $g_\infty\leq \varepsilon$ on $\Delta_\delta$. Now since $0\in\partial V_m$ for every $m$, it follows that the set
\[
V=\Delta_\delta\cup\bigcup_{m=1}^{\infty}V_m
\]
is unbounded and connected, and hence it is contained in an unbounded connected component of $\{g_\infty\leq \varepsilon\}$. But that contradicts the hypothesis that $f$ is stably-unbridged, which completes the proof.
\end{proof}
By Theorem \ref{boundedlevset}, if $f$ is substantially dissipative and quasi-hyperbolic, then $f$ is stably-unbridged. It is shown in \cite{LP2} that if $f$ is substantially dissipative and $f$ admits a dominated splitting over $J^*$ then it is stably-unbridged. As a consequence we obtain the two following corollaries
\begin{corollary}
Let $f$ be a substantial dissipative H\'enon map, and assume that $f$ is quasi-hyperbolic. Then $J = J^\star$.
\end{corollary}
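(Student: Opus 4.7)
The plan is straightforward: this corollary follows by chaining together the two main technical theorems proved in the preceding sections, with essentially no new work required. First I would invoke Theorem \ref{boundedlevset}, which shows that the two hypotheses of the corollary --- substantial dissipativity and quasi-hyperbolicity --- together imply that $f$ is stably-unbridged. This is the technically substantial step, resting on Wiman's Theorem applied to $G^-$ composed with stable parametrizations, combined with the stratification argument over the sets $J^\star_{m^s,m^u}$ and a careful analysis of the growth rate of $G^-$ along stable directions via the Jacobian determinant.

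Second, I would observe that quasi-hyperbolicity, by the definition given in Section \ref{section:quasi}, is precisely the conjunction of quasi-expansion and quasi-contraction. In particular $f$ is quasi-contracting, so Theorem \ref{maintheorem2} applies directly to $f$. That theorem states that every stably-unbridged quasi-contracting H\'enon map satisfies $J = J^\star$, giving the conclusion.

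The only real obstacle is notational: one must verify that the hypotheses of Theorems \ref{boundedlevset} and \ref{maintheorem2} match the assumptions here. Theorem \ref{boundedlevset} needs quasi-hyperbolicity (check) and substantial dissipativity (check); Theorem \ref{maintheorem2} needs quasi-contraction (supplied by quasi-hyperbolicity) and the stably-unbridged property (supplied by Theorem \ref{boundedlevset}). Since all the genuine difficulty --- Wiman's Theorem, the stratification, the Pesin box construction, and the normal-limit compactness argument --- has already been absorbed into those two theorems, the proof of the corollary is a single line combining them.
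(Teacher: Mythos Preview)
Your proposal is correct and matches the paper's own argument exactly: the corollary is deduced in one line by combining Theorem \ref{boundedlevset} (substantial dissipativity plus quasi-hyperbolicity implies stably-unbridged) with Theorem \ref{maintheorem2} (stably-unbridged quasi-contracting implies $J=J^\star$), noting that quasi-hyperbolicity contains quasi-contraction.
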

\begin{corollary}
Let $f$ be a substantial dissipative H\'enon map, and assume that $f$ admits a dominated splitting on $J^\star$. Then $J = J^\star$.
\end{corollary}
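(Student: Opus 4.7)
The plan is to reduce the corollary to Theorem \ref{maintheorem2}, which establishes $J = J^\star$ for any stably-unbridged quasi-contracting H\'enon map. It therefore suffices to verify two properties of $f$: that $f$ is stably-unbridged and that $f$ is quasi-contracting.

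The first property is exactly the content of the result cited from \cite{LP2} in the paragraph immediately preceding the corollary: a substantially dissipative H\'enon map admitting a dominated splitting on $J^\star$ is stably-unbridged. No additional argument is needed beyond invoking this reference.

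For the second property, I would argue that dominated splitting combined with substantial dissipativity implies uniform exponential contraction of the stable bundle, from which quasi-contraction follows via the standard construction of a normal family of stable parametrizations. In more detail, the dominated splitting supplies a continuous $df$-invariant decomposition $E^s \oplus E^u$ over $J^\star$ with a dominance estimate of the form $\|df^n|_{E^s_p}\| \le C \mu^n \|df^n|_{E^u_p}\|$ for some $\mu \in (0,1)$. Since the Jacobian is constant, $\|df^n|_{E^s_p}\| \cdot \|df^n|_{E^u_p}\|$ is comparable (up to uniformly bounded angle factors, using continuity of the splitting and compactness of $J^\star$) to $|\det(df)|^n$. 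Multiplying the dominance inequality by $\|df^n|_{E^s_p}\|$ and substituting the Jacobian identity yields $\|df^n|_{E^s_p}\|^2 \le C' (\mu |\delta|)^n$, so $E^s$ is uniformly exponentially contracted. The stable manifolds through the saddle points are then entire curves parametrized by $\mathbb{C}$, and the resulting parametrizations, after the normalizations of \eqref{normalizationprop}, form a normal family by the uniform contraction rate. Hence $f^{-1}$ is quasi-expanding, i.e., $f$ is quasi-contracting.

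With both properties established, Theorem \ref{maintheorem2} gives $J = J^\star$ immediately. The only point of substance is the passage from dominated splitting plus substantial dissipativity to quasi-contraction; once uniform contraction of $E^s$ is in hand this is standard, so the main obstacle is mostly bookkeeping (in particular, justifying the angle bounds from continuity of the dominated splitting on the compact set $J^\star$). Everything else is a direct citation of previously established results.
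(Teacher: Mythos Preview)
Your proposal is correct and follows the same route as the paper: invoke Theorem \ref{maintheorem2} after checking that $f$ is quasi-contracting and stably-unbridged, the latter via \cite{LP2}. The only difference is one of emphasis: the paper folds quasi-contraction into its convention that ``stably-unbridged'' is only defined for quasi-contracting maps (so the citation of \cite{LP2} is taken to cover both properties at once), whereas you separate the two and sketch an independent argument for quasi-contraction via uniform contraction of $E^s$. Your sketch is fine---dominated splitting is continuous, so the angles are bounded away from zero on the compact set $J^\star$, and the determinant identity then yields exponential contraction of $E^s$; this already uses only dissipativity, not the full strength of substantial dissipativity, and uniform contraction of the stable bundle is well known (cf.\ \cite{BS8}) to imply quasi-expansion of $f^{-1}$.
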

\printbibliography
\end{document}